\setlist[enumerate,1]{label=(\arabic*), ref=(\arabic*)}
\setlist[enumerate,3]{label=(\roman*), ref=(\roman*)}
\numberwithin{equation}{section}
\theoremstyle{plain}
\newtheorem{thm}{Theorem}[section]
\newtheorem*{thm*}{Theorem}
\newtheorem{lem}[thm]{Lemma}
\newtheorem*{lem*}{Lemma}
\newtheorem{prop}[thm]{Proposition}
\newtheorem*{prop*}{Proposition}
\newtheorem{cor}[thm]{Corollary}
\newtheorem*{cor*}{Corollary}
\newtheorem*{claim*}{Claim}
\newtheorem*{conj*}{Conjecture}
\theoremstyle{definition}
\newtheorem{defn}[thm]{Definition}
\newtheorem*{defn*}{Definition}
\newtheorem{ques}[thm]{Question}
\newtheorem*{ques*}{Question}
\newtheorem{exa}[thm]{Example}
\newtheorem*{exa*}{Example}
\newtheorem*{acknowledgements}{Acknowledgements}
\theoremstyle{remark}
\newtheorem{rmk}[thm]{Remark}
\newtheorem*{rmk*}{Remark}
\newtheorem*{notation}{Notation}
\numberwithin{figure}{section}
\numberwithin{table}{section}
\numberwithin{equation}{section}
\def \AA {\mathbb{A}}
\def \CC {\mathbb{C}}
\def \PP {\mathbb{P}}
\def \Ecal {\mathcal{E}}
\def \Hcal {\mathcal{H}}
\def \Ical {\mathcal{I}}
\def \Jcal {\mathcal{J}}
\def \Lcal {\mathcal{L}}
\def \Ocal {\mathcal{O}}
\def \Ucal {\mathcal{U}}
\def \Sfr {\mathfrak{S}}
\def \mfr {\mathfrak{m}}
\def \hbar {\bar{h}}
\def \Pbf {\mathbf{P}}
\DeclareMathOperator{\im}{im}
\DeclareMathOperator{\pr}{pr}
\DeclareMathOperator{\Pic}{Pic}
\DeclareMathOperator{\rank}{rank}
\DeclareMathOperator{\supp}{supp}
\DeclareMathOperator{\Sym}{Sym}
\DeclareMathOperator{\res}{res}
\DeclareMathOperator{\codim}{codim}
\DeclareMathOperator{\SL}{SL}
\DeclareMathOperator{\Hom}{Hom}
\DeclareMathOperator{\Gr}{Gr}
\title{The secant variety and syzygies of the Hilbert scheme of two points}
\author{Chiwon Yoon}
\address{Department of Mathematical Sciences, KAIST, 291 Daehak-ro, Yuseong-gu, Deajeon, 34141, Republic of Korea
}
\email{dbs7985@kaist.ac.kr}
\author{Haesong Seo}
\address{Department of Mathematical Sciences, KAIST, 291 Daehak-ro, Yuseong-gu, Deajeon, 34141, Republic of Korea
}
\email{hss21@kaist.ac.kr}
\subjclass[2020]{
    14C05, % Parametrization (Chow and Hilbert schemes)
    14N07, % Secant varieties, tensor rank, varieties of sums of powers
    13D02, % Syzygies, resolutions, complexes and commutative rings
    14E05. % Rational and birational maps
	% 14E30, % Minimal model program (Mori theory, extremal rays)
	% 14H30, % Coverings, fundamental group
	% 32H50, % iteration problem,
	% 11G10, % Abelian varieties of dimension > 1
	% 20K30, % Automorphisms, homomorphisms, endomorphisms, etc.
	% 08A35, % Automorphisms, endomorphisms
	% 14M25, % Toric varieties, Newton polyhedra
	% 14J40, % n-folds (n > 4) 
	% 14J45. % Fano varieties
        % 14J26, % Rational and ruled surfaces
        % 14J27, % Elliptic surfaces, elliptic or Calabi-Yau fibrations
        % 14J60. % Vector bundles on surfaces and higherdimensional varieties, and their moduli
}
\keywords{Hilbert scheme of points, secant variety, syzygy}
\newcommand{\Sec}{\mathrm{Sec}}
\begin{document}

\maketitle

\begin{abstract}
    In this paper, we prove that $\mathrm{Sec} (X^{[2]})$ features the identifiability under the Grothendieck-Pl\"ucker embedding $X^{[2]} \hookrightarrow \PP^N$ when $X$ is embedded by a $4$-very ample line bundle.
    We also prove that the embedding $X^{[2]} \hookrightarrow \PP^N$ satisfies Green's condition $(N_p)$ when the embedding of $X$ is positive enough.
    Accordingly, the singular locus of $\mathrm{Sec} (X^{[2]})$ is exactly $X^{[2]}$ when the embedding of $X$ is positive enough.
    As an application, we describe the geometry of a resolution of singularities from the secant bundle to $\mathrm{Sec}(X^{[2]})$ when $X$ is a surface.
\end{abstract}

%---------------------------------------------------------------------

\section{Introduction} \label{sec:intro}

Let $X \subset \PP^N$ be a nondegenerate projective variety defined over $\CC$.
The \textit{secant variety} $\Sec(X)$ of $X$ is the closure of the union of secant lines, i.e., the line determined by two points of $X$.

The singular loci of secant varieties are of particular interest.
Terracini's lemma asserts that $\Sec(X)$ is singular along $X$ if $\Sec(X)\neq \PP^N$, but there are only a few cases where singular loci are fully identified.
Micha\l ek, Oeding and Zwiernik \cite{MOZ2015} studied the secant varieties of Segre varieties, analyzing their singular loci and singularity types.
For Veronese varieties, Kanev \cite{Kanev1999} determined the singular loci of secant varieties.
Similar analyses have been conducted for Grassmannians by Galgano and Staffolani \cite{GS2022} and by Manivel and Micha\l ek \cite{MM2015}.
The readers might refer to \cite{AB2013}, \cite{FH2021} and \cite{Han2018} for higher secants.

In a general context, it is believed that the secant variety $\mathrm{Sec}(X)$ exhibits improved behavior of singularity when the embedding of $X$ is sufficiently positive.
Ullery \cite{Ullery2016} confirmed the normality of $\mathrm{Sec}(X)$ when embedded by an adjoint type line bundle.
Further, Chou and Song \cite{CS2018} determined the type of singularities of $\mathrm{Sec}(X)$ under some mild conditions on $X$.
For curves, Ein, Niu and Park \cite{ENP2020} provided an in-depth analysis of the types of singularities of all higher secants.

As seen earlier, numerous studies have focused on the secant varieties of varieties equipped with highly symmetric structures or sufficiently positive embeddings.
Now, we want to shift our attention to the role of identifiability to analyze the singular loci.
A point of the secant variety is called \textit{identifiable} if it lies on the unique secant or tangent line to $X$.
As featured in \cite{GS2022}, the singular locus of the secant variety of the Grassmannian is exactly the non-identifiable locus.
Although this phenomenon is no longer true in general, we see that the identifiability is the first hurdle to showing the smoothness.

On the other hand, in the context of syzygies, several significant results have been established.
A fundamental result of Green \cite{Green1984a, Green1984b} states that if a smooth projective curve $C \subset \PP^n$ has a sufficiently large degree, then the embedding has the property $(N_p)$.
Ein and Lazarsfeld \cite{EL1993} generalized this result to an arbitrary smooth projective variety by showing that an adjoint type line bundle satisfies $(N_p)$.
Moreover, Gallego and Purnaprajna \cite{GP2000} studied syzygies of K3 surfaces and Fano varieties of dimension $n$ and index $n-2$; Park \cite{Park2007} studied syzygies of projective bundles of sufficiently positive vector bundles.

In most cases, there is a natural choice of resolution of singularities of secant varieties --- the secant bundle $\PP(\Ecal_{\Lcal})$ (see Section~\ref{sec:application}).
Vermeire \cite{Vermeire2001} proved that, together with a condition on syzygies, the blowup $\mathrm{Bl}_X \mathrm{Sec}(X) \rightarrow \mathrm{Sec}(X)$ is isomorphic to the natural map $u:\PP(\Ecal_{\Lcal}) \rightarrow \mathrm{Sec}(X)$.
However, this result might not hold without imposing conditions on syzygies (cf. Example~\ref{exa:Imposing-conditions-on-syzygies-needed}), which indicates the essential role of syzygies in describing the map $u$.

The Hilbert scheme $X^{[d]}$ of $d$ points on a smooth projective variety $X$ is a very interesting geometric object.
Note that there is a canonical map $\varphi_{d-1,\Lcal}: X^{[d]} \hookrightarrow \Gr(d,V)$ called the \textit{Grothendieck-Pl\"ucker embedding}, where $\Lcal$ is a $d$-very ample line bundle on $X$, i.e., it separates length $d+1$ subschemes of $X$ (cf.~\cite[Main~Theorem]{CG1990}).
It is natural to ask under what conditions certain algebraic or geometric properties of a polarized variety $(X, \Lcal)$ are preserved in $(X^{[d]}, \varphi_{d-1,\Lcal}^{\ast} \Ocal(1))$.
From this viewpoint, we consider the singular locus of $\mathrm{Sec}(X^{[2]})$ and the syzygies of $X^{[2]}$ under $\varphi_{1,\Lcal}$.

In this study, we prove that the Hilbert scheme $X^{[2]}$ of two points on $X \subset \PP(V)$ features the identifiability and satisfies $(N_p)$ if the embedding of $X$ is positive enough.
Our first main theorem is presented as follows:

\begin{thm} \label{thm:main-theorem-1}
    Let $X$ be a smooth projective variety, and let $\Lcal$ be a $4$-very ample line bundle on $X$.
    Under the embedding $\varphi_{1, \Lcal}:X^{[2]} \hookrightarrow \PP^N$, the non-identifiable locus of $\mathrm{Sec} (X^{[2]})$ is exactly $X^{[2]}$.
\end{thm}

According to our main result, although the embedding of the Hilbert scheme of points cannot even be $2$-very ample in general (cf. Lemma~\ref{lem:lines-in-Hilbert-scheme}), it still exhibits the identifiability.

In the proof, we show that secant lines and tangent lines to $X^{[2]}$ do not intersect out of $X^{[2]}$.
The results by Galgano and Staffolani \cite{GS2022} concerning the secant variety of the Grassmannian play an important role in simplifying our analysis.

The second main theorem deals with the syzygies of the Hilbert scheme of two points.

\begin{thm} \label{thm:main-theorem-2}
    Let $X$ be a smooth projective variety.
    For an integer $p \geq 0$, the Grothendieck-Pl\"ucker embedding $\varphi_{1,\Lcal}: X^{[2]} \hookrightarrow \PP^N$ satisfies $(N_p)$ if $\Lcal$ is a sufficiently positive line bundle on $X$.
\end{thm}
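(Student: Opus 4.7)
The plan is to verify Green's cohomological criterion for the pair $(X^{[2]},\Mcal)$, where $\Mcal := \det \Lcal^{[2]}$ is the tautological line bundle of the Grothendieck-Pl\"ucker embedding. Property $(N_p)$ will follow once we establish the vanishings
\begin{equation*}
H^1\bigl(X^{[2]},\, \wedge^{p+1} M_\Mcal \otimes \Mcal^{\otimes k}\bigr) = 0 \quad \text{for every } k \ge 1,
\end{equation*}
where $M_\Mcal$ is the kernel of the evaluation surjection $H^0(X^{[2]},\Mcal) \otimes \Ocal_{X^{[2]}} \twoheadrightarrow \Mcal$. The preliminary ingredient that $H^0(X^{[2]},\Mcal) \cong \wedge^2 H^0(X,\Lcal)$, together with projective normality of the embedding, is automatic once the positivity of $\Lcal$ is strong enough to ensure $H^i(X\times X,\, \Ical_\Delta \cdot (\Lcal \boxtimes \Lcal)^{\otimes k}) = 0$ for $i,k\ge 1$.

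My plan is to transfer the required vanishing from $X^{[2]}$ to the \'etale double cover $\pi\colon Y \to X^{[2]}$ where $Y = \mathrm{Bl}_\Delta(X\times X)$, and then down to $X\times X$ via the blowdown $\sigma\colon Y \to X\times X$ with exceptional divisor $E$. A fiberwise computation gives $\pi^*\Mcal \cong \sigma^*(\Lcal\boxtimes\Lcal) \otimes \Ocal_Y(-E)$, so the Koszul complex defining $M_\Mcal$, pulled back and filtered through the inclusion $\wedge^2 H^0(\Lcal) \hookrightarrow H^0(\Lcal)\otimes H^0(\Lcal)$, has associated graded pieces of the form
\begin{equation*}
\sigma^*\bigl(\wedge^{a} M_\Lcal \otimes \Lcal^s \,\boxtimes\, \wedge^{b} M_\Lcal \otimes \Lcal^t\bigr) \otimes \Ocal_Y(-jE), \qquad a+b=p+1.
\end{equation*}
Because $\pi$ is \'etale of degree two, $H^1$-vanishing on $Y$ implies it on $X^{[2]}$ after taking $S_2$-invariants, so it suffices to verify the vanishing upstairs.

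The residual cohomologies on $Y$ are attacked by pushing down along $\sigma$: the blowup identity $R\sigma_*\Ocal_Y(-jE) = \Ical_\Delta^j$ for $j \ge 0$ reduces the question to $\Ical_\Delta^j$-twisted cohomology on $X\times X$, which in turn splits by K\"unneth into ingredients coming from each $X$ factor. Imposing sufficient positivity on $\Lcal$ (for instance $\Lcal = K_X \otimes A^{\otimes m}$ with $A$ very ample and $m$ large compared to $p$ and $\dim X$) provides, via the Ein--Lazarsfeld technique on $X$, the vanishings $H^i(X, \wedge^{a}M_\Lcal \otimes \Lcal^{s}) = 0$ for $i\ge 1$, $s\ge 1$, $a\le p+1$; combined with Kodaira vanishing along the projective-bundle fibers of $E = \PP(N_{\Delta/X\times X})\to\Delta$, this furnishes the required vanishing. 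The principal technical obstacle is controlling the interaction between the Koszul filtration and the twists by $\Ocal_Y(-jE)$: the filtration mixes the symmetric and alternating parts of $H^0(\Lcal)^{\otimes 2}$, so one must carefully isolate the summands that actually appear in $\pi^* \wedge^{p+1} M_\Mcal$ while simultaneously bounding the error terms supported on $E$ and their interaction with the $S_2$-descent. Making this bookkeeping explicit is exactly what quantifies "positive enough" in the statement.
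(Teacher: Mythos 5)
Your overall strategy differs from the paper's, and it has a genuine gap at its core. The paper does not pass to $X\times X$ at all: it writes $\Lcal'=\Lcal^{[2]}-\tfrac{B}{2}$, uses the restriction sequence $0\to\Ocal_{X^{[2]}}(-B)\to\Ocal_{X^{[2]}}\to\Ocal_B\to 0$ to split the required vanishing $H^1(X^{[2]},\bigwedge^q M_{\Lcal'}\otimes\Lcal'^k)=0$ into (i) a vanishing on $X^{[2]}$ twisted by $-r_0B$ for $r_0\gg 0$, handled by the Ein--Lazarsfeld non-exact resolution and a Le Potier--Sommese type vanishing, and (ii) a vanishing on the divisor $B\simeq\PP(\Omega_X)$, handled by Park's theorem on syzygies of projective bundles. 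Your route instead pulls everything back to $Y=\mathrm{Bl}_\Delta(X\times X)$ and pushes down to $X\times X$. The descent step itself is salvageable even though your justification is wrong ($\pi$ is \emph{not} \'etale --- it is ramified along $E$ with branch divisor $B$; one instead uses that $\Fcal$ is a direct summand of $\pi_*\pi^*\Fcal$ for a finite map in characteristic zero). The real problem is the next step.

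The claim that $\pi^*\bigwedge^{p+1}M_{\Lcal'}$ admits a filtration with graded pieces $\sigma^*\bigl(\bigwedge^{a}M_\Lcal\otimes\Lcal^s\boxtimes\bigwedge^{b}M_\Lcal\otimes\Lcal^t\bigr)\otimes\Ocal_Y(-jE)$ is asserted, not proved, and it is the entire content of the theorem. The evaluation map $\bigwedge^2H^0(\Lcal)\otimes\Ocal_Y\to\mu^*\Lcal^{\boxtimes 2}(-E)$ is not the restriction of the box-product evaluation $H^0(\Lcal)^{\otimes 2}\otimes\Ocal\to\Lcal^{\boxtimes 2}$; near $E$ it involves division by the equation of $E$, so its kernel differs from (the pullback of the antisymmetric part of) $M_{\Lcal\boxtimes\Lcal}$ by contributions built from the conormal bundle of the diagonal, and wedge powers do not decompose into clean box products twisted by multiples of $E$. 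Moreover, even granting $R\sigma_*\Ocal_Y(-jE)=\Ical_\Delta^j$, the resulting groups $H^i(X\times X,\Gcal\otimes\Ical_\Delta^j)$ do \emph{not} split by K\"unneth, since $\Ical_\Delta^j$ is not an external tensor product; resolving it reintroduces symmetric powers of $\Omega_X$, which is precisely the difficulty the paper confronts head-on by working on $B\simeq\PP(\Omega_X)$ and invoking Park's projective-bundle syzygy theorem. Your closing paragraph concedes that isolating the relevant summands and controlling the error terms on $E$ "is exactly what quantifies positive enough" --- but that bookkeeping is not a quantitative refinement to be filled in later; it is the missing proof.
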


As a corollary, combining with Theorem~\ref{thm:main-theorem-1}, one can argue about the singular locus of the secant variety of $X^{[2]}$.

\begin{cor} \label{cor:main-theorem-3}
    Let $X$ be a smooth projective variety, and let $\Lcal$ be a sufficiently positive line bundle on $X$.
    Under the embedding $\varphi_{1,\Lcal}:X^{[2]} \hookrightarrow \PP^N$, the singular locus of $\mathrm{Sec}(X^{[2]})$ is exactly $X^{[2]}$.
\end{cor}

Note that we do not have a bound for positivity, as it relies on the vanishing of the cohomology groups of certain negatively twisted vector bundles.

To outline the proof of Theorem~\ref{thm:main-theorem-2}, the divisor $B \subset X^{[2]}$ parametrizing nonreduced subschemes is isomorphic to the projectivized cotangent bundle $\PP(\Omega_X)$, so one can apply \cite{Park2007} to verify $(N_p)$ for $B$.
Then we utilize the idea from \cite[Observation~1.3]{GP2000} that the syzygies on a divisor give some information on the syzygies of the ambient space, allowing us to reach the desired result.

As an application, we describe the geometry of $u:\PP(\Ecal_{\Lcal'}) \rightarrow \Sigma = \mathrm{Sec}(X^{[2]})$, where $\Lcal'$ is the line bundle defining the Grothendieck-Pl\"ucker embedding $X^{[2]} \subset \PP^N$.
The problem becomes obvious for curves, so we focus on the surface case.

\begin{prop} \label{prop:application-theorem}
    Let $X$ be a surface.
    The map $u$ factors through $\widetilde{u}: \PP(\Ecal_{\Lcal'}) \rightarrow \widetilde{\Sigma} = \mathrm{Bl}_{X^{[2]}} \Sigma$, and the fiber of $\widetilde{u}$ can be described as follows:
    \begin{enumerate}
        \item if $x \in \widetilde{\Sigma}$ is mapped to $B \subset X^{[2]}$ via the blowup morphism $\widetilde{\Sigma} \rightarrow \Sigma$, then the fiber $\widetilde{u}^{-1}(x)$ is isomorphic to $\PP^2$;
        \item otherwise, the fiber is a reduced point.
    \end{enumerate}
\end{prop}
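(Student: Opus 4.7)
The proof proceeds in two parts: constructing $\widetilde u$ together with the easy fibers off the exceptional divisor, and then analyzing the fibers over the exceptional divisor of $\widetilde\Sigma\to\Sigma$ (the essential step).

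\emph{Construction and easy fibers.} I would first show that $u^{-1}(X^{[2]})$ is an effective Cartier divisor in the smooth $9$-dimensional variety $\PP(\Ecal_{\Lcal'})$; the universal property of the blowup then produces $\widetilde u$ uniquely. Fiberwise over $[Z_0]\in X^{[2]}$, the preimage is birational to $\mathrm{Bl}_{[Z_0]}X^{[2]}$, parametrizing length-$2$ subschemes $W\subset X^{[2]}$ that contain $[Z_0]$; this has dimension $4$, so $u^{-1}(X^{[2]})$ has dimension $8$, i.e., codimension one, and hence is Cartier by smoothness. For $p\in\Sigma\setminus X^{[2]}$, Theorem~\ref{thm:main-theorem-1} gives identifiability of $p$, so $u^{-1}(p)$ is a single reduced point; since $\widetilde\Sigma\to\Sigma$ is an isomorphism off $X^{[2]}$, the same holds for $\widetilde u^{-1}(x)$ for every $x$ lying above such a $p$.

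\emph{Fibers over the exceptional divisor.} Fix $[Z_0]\in X^{[2]}$ and write $F_{[Z_0]}:=u^{-1}([Z_0])$ and $E_{[Z_0]}$ for the fiber of $\widetilde\Sigma\to\Sigma$ over $[Z_0]$. The map $F_{[Z_0]}\to E_{[Z_0]}$ sends each $W$ containing $[Z_0]$ to the projectivized direction of the line $\langle W\rangle$ at $[Z_0]$ in the normal cone of $X^{[2]}\subset\Sigma$. For $[Z_0]\in X^{[2]}\setminus B$ (a reduced unordered pair), injectivity of the direction-of-$\langle W\rangle$ map combined with a dimension count on two $4$-dimensional varieties yields a bijection $F_{[Z_0]}\xrightarrow{\sim}E_{[Z_0]}$, and hence a reduced point fiber for $\widetilde u$. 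For $[Z_0]\in B$, the decisive new phenomenon is that the $3$-dimensional hyperplane $T_{[Z_0]}B\subset T_{[Z_0]}X^{[2]}$ of tangent directions along $B$ all map to a single direction in the normal cone, producing a $\PP^2\cong\PP(T_{[Z_0]}B)$ of collapsed directions. This makes $F_{[Z_0]}\to E_{[Z_0]}$ a $\PP^2$-bundle onto a $2$-dimensional $E_{[Z_0]}$, so the $\widetilde u$-fiber over each point of $E_{[Z_0]}$ is $\PP^2$.

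\emph{Main obstacle.} The crux is the $\PP^2$-collapse at a non-reduced $[Z_0]=2x\in B$, which demands a local differential analysis of the Pl\"ucker embedding of $X^{[2]}$ near $[Z_0]$. Concretely, one chooses local coordinates adapted to the splitting $T_{[Z_0]}X^{[2]}=T_{[Z_0]}B\oplus N_{B/X^{[2]},[Z_0]}$ and computes the Jacobian of the embedding, aiming to exhibit a rank drop precisely in $T_{[Z_0]}B$-directions. The identification $B\cong\PP(\Omega_X)$ together with explicit descriptions of the tangent sheaf of $X^{[2]}$ along $B$ should reduce this to a finite-dimensional linear algebra check on the first-order Taylor expansion of the Pl\"ucker coordinates at $[Z_0]$, showing that tangent lines in $T_{[Z_0]}B$-directions all fall in a common plane of $\PP^N$ through $[Z_0]$.
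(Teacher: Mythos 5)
Your route is genuinely different from the paper's, and its central step rests on an incorrect geometric picture, so the proposal does not go through. In the paper, $\widetilde u$ is not obtained from the universal property of the blowup at all: one picks quadrics generating the ideal of $X^{[2]}$ (this is where the standing hypothesis $(N_2)$ enters --- your proposal never uses it), shows that the induced morphism $\widetilde\varphi:\widetilde\Sigma\to\PP^s$ has lines as fibers (Lemma~\ref{lem:fiber-of-blowup-of-secant}, which needs Propositions~\ref{prop:Secant-lines-intersect} and~\ref{prop:Tangent-lines-intersect} to exclude conics and planes in the fibers), identifies $\im\widetilde\varphi$ with the image of $\varphi_1:(X^{[2]})^{[2]}\to\Gr(2,V)$, and proves $\PP(\Ecal_{\Lcal'})\simeq \widetilde\Sigma\times_{\im\varphi_1}(X^{[2]})^{[2]}$; the fibers of $\widetilde u$ are then literally the fibers of $\varphi_1$. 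Your alternative construction of $\widetilde u$ is incomplete as stated: knowing that $u^{-1}(X^{[2]})$ has pure codimension one in the smooth $\PP(\Ecal_{\Lcal'})$ does not make the inverse-image ideal sheaf invertible (compare $(x^2,xy)\subset\CC[x,y]$), which is what the universal property of blowing up requires.

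The decisive gap is in your analysis over the exceptional divisor. The $\PP^2$-fibers are not produced by a pointwise collapse of the tangent hyperplane $T_{[Z_0]}B$: distinct tangent directions at $[Z_0]$ determine distinct tangent lines (and a two-parameter family of distinct lines through a fixed point cannot lie in a single plane), so the rank drop you hope to exhibit does not exist. The actual mechanism is global: by Lemma~\ref{lem:lines-in-Hilbert-scheme} the lines contained in $X^{[2]}$ are exactly the fibers $\ell$ of $B\simeq\PP(\Omega_X)\rightarrow X$, and for such an $\ell$ every length-two subscheme of $\ell$ spans the same line of $\PP^N$; hence the whole surface $\ell^{[2]}\simeq(\PP^1)^{[2]}\simeq\PP^2$ is contracted by $\widetilde u$. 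Accordingly, for $[Z_0]\in B$ the fiber $F_{[Z_0]}$ is not just $\{W:[Z_0]\in W\}$ but also contains $\ell_0^{[2]}$ for the unique line $\ell_0\ni[Z_0]$; the fiber $E_{[Z_0]}$ of the exceptional divisor is $4$-dimensional (the projectivized normal cone of the $4$-fold $X^{[2]}$ in the $9$-fold $\Sigma$ has pure dimension $8$), not $2$-dimensional; and $F_{[Z_0]}\rightarrow E_{[Z_0]}$ is generically injective with a single contracted $\PP^2$ rather than a $\PP^2$-bundle. In particular the statement actually proved is the one in Proposition~\ref{prop:resolution-of-singularities-of-secant}: the fiber is $\PP^2$ exactly over those points whose associated secant line lies in $X^{[2]}$, a three-dimensional locus inside the preimage of $B$, and your proposed picture would establish a different (and false) assertion. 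Finally, the reducedness of the fibers over the exceptional locus is checked in the paper via the differential criterion of Lemma~\ref{lem:Hilbert-scheme-to-Grassmannian}, a step your proposal does not address.
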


Even if $X$ contains a line, this result suggests the general idea that the map $u$ is easy to portray when equipped with $(N_2)$.
Note that due to technical difficulties, we cannot extend this result to higher dimensional case.

This paper is organized as follows.
Section~\ref{sec:preliminaries} provides the identifiability result of Grassmannians, along with backgrounds on the Hilbert scheme of points and syzygies of algebraic varieties.
In Section~\ref{sec:identifiability}, we focus on determining the non-identifiable locus of $\mathrm{Sec}(X^{[2]})$.
We give a complete description of the lines in $X^{[d]}$ for $d \geq 2$, and find the conditions on $\dim X$ and $d$ under which a pair of intersecting secant lines or tangent lines to $X^{[d]}$ may exist.
In Section~\ref{sec:syzygies}, we prove the second main theorem by adapting a similar approach to \cite[Theorem~1]{EL1993}.
In Section~\ref{sec:application}, we establish Corollary~\ref{cor:main-theorem-3} and Proposition~\ref{prop:application-theorem} following the directions of \cite{Vermeire2001}.
Finally, we pose several questions on the Hilbert scheme of points in Section~\ref{sec:questions}.

\begin{notation}
For a vector space $V$ of dimension $n$ over $\CC$, the projective space $\Pbf(V)$ (resp. $\PP(V)$) parametrizes one-dimensional subspaces (resp. quotients) of $V$.
For $1 \leq k < n$, the Grassmannian $\Gr(k,V)$ parametrizes $k$-dimensional subspaces in $V$.
For general notations, we refer to \cite{Hartshorne1977}.
\end{notation}

\begin{acknowledgements}
This research is supported by the Institute for Basic Science (IBS-R032-D1).
We would like to express our gratitude to Professor Yongnam Lee for his suggestions on research topics and valuable comments.
We would also like to thank Doyoung Choi for his valuable comments.
We are very grateful to Professor Daniele Agostini and Jinhyung Park for correcting Theorem 4.2 on the smoothness of identifiable locus in the previous version of our manuscript.
\end{acknowledgements}

%---------------------------------------------------------------------

\section{Preliminaries} \label{sec:preliminaries}

\subsection{Identifiability of the Grassmannian.} \label{subsec:identifiability-of-grassmannians}
Throughout this paper, we work over $\CC$.
Let us recall the definition of identifiability.
For a nondegenerate algebraic variety $X \subset \PP^N$, a point $p$ in $\mathrm{Sec}(X)$ is called \textit{identifiable} if $p$ lies on a line in $\PP^N$ determined by a unique length $2$ subscheme of $X$.
The set of (non-)identifiable points is called the \textit{(non-)identifiable locus}.

Let $V$ be a finite dimensional vector space over $\CC$.
Consider the natural action of $\SL(V)$ on the Grassmannian $\Gr(k,V)$, where $1 \leq k \leq \frac{\dim V}{2}$.
The action extends to $\Pbf(\bigwedge^{k} V)$ via the Pl\"ucker embedding.
In \cite{GS2022}, Galgano and Staffolani classified the $\SL(V)$-orbit stratification of the secant variety of $\Gr(k,V)$ as follows:
 \[
  \begin{tikzcd}
      &[-6ex] &[-6ex] \Sigma_k \\[-3ex]
      \Theta_k \arrow{rru} & & \\[-5ex]
      & & \,\,\vdots\,\, \arrow{uu} \\[-5ex]
      \,\,\vdots\,\, \arrow{uu} \arrow{rru} & & \\[-5ex]
      & & \Sigma_3 \arrow{uu} \\[-3ex]
      \Theta_3 \arrow{uu} \arrow{rru} & & \\[-2ex]
      & \Sigma_2 = \Theta_2 \arrow{lu} & \\[-1ex]
      & \Sigma_1 = \Theta_1 = \Gr(k,V) \arrow{u} &
  \end{tikzcd}
 \]
where the arrow means the inclusion of an orbit into the closure of the other one.

To describe those orbits, they used the following notion of distance:
the \textit{Hamming distance} $\mathrm{dist}(p,q)$ between two points $p,q \in \Gr(k,V)$ is the minimal length of a sequence of lines $\ell_0, \dots, \ell_r$ in $\Gr(k,V)$ such that $p \in \ell_0$, $q \in \ell_r$ and $\ell_i \cap \ell_{i+1} \neq \varnothing$ for $0 \leq i < r$.
If we write $p = [W_1]$ and $q = [W_2]$ for some subspaces $W_1,W_2 \subset V$ of dimension $k$, we obtain
 \[
  \mathrm{dist}(p,q) = \codim_{W_1} (W_1 \cap W_2).
 \]
In particular, if $\mathrm{dist}(p,q)=d$, there are linearly independent vectors $v_1, \dots, v_{k+d}$ in $V$ such that
 \[
  p = [v_1 \wedge \cdots \wedge v_k], \qquad q = [v_1 \wedge \cdots \wedge v_{k-d} \wedge v_{k+1} \wedge \cdots \wedge v_{k+d}]
 \]
as elements of $\Pbf(\bigwedge^{k} V)$.
The orbit $\Sigma_d$ can be given by
 \[
  \Sigma_d = \bigcup_{\mathrm{dist}(p,q) = d} \langle p,q \rangle \setminus \{ p,q \}.
 \]
Similarly, $\Theta_d$ is the union of tangent lines of rank $d$, excluding the tangent points.
Here, a tangent line to $\Gr(k,V)$ at a point $[U]$ is defined by a tangent vector in $T_{[U]}\Gr(k,V) \simeq \Hom_{\CC}(U, V/U)$, and its rank coincides with the matrix rank.

\begin{thm} [{\cite[Main~Result]{GS2022}}] \label{thm:identifiable-locus-for-Grassmannian}
    Let $V$ be a vector space of dimension $n$ over $\CC$.
    For $3 \leq k \leq \frac{n}{2}$ and $d \leq \frac{n}{2}$, The orbit $\Sigma_{d, \Gr(k,V)}$ in $\mathrm{Sec}(\Gr(k,V))$ is identifiable if and only if $d \geq 3$.
    The orbit $\Theta_{d, \Gr(k,V)}$ in $\mathrm{Sec}(\Gr(k,V))$ is identifiable if and only if $d \geq 3$.
\end{thm}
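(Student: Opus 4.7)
The plan is to work inside the orbit stratification of $\Pbf(\bigwedge^k V)$ under the $\SL(V)$-action, exploiting the normal forms for elements of $\Sigma_d$ and $\Theta_d$ recalled above, and to split the equivalence into its two directions.

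For non-identifiability when $d \leq 2$, the case $d = 1$ is tautological since $\Sigma_1 = \Theta_1 = \Gr(k,V)$ and every point of $\Gr(k,V)$ already lies on a positive-dimensional family of tangent lines. For $d = 2$ I would put a representative into the normal form
\[
p \;=\; v_1 \wedge \cdots \wedge v_{k-2} \wedge \omega, \qquad \omega \in \bigwedge^{2} W,
\]
with $W = \langle v_{k-1}, v_k, v_{k+1}, v_{k+2} \rangle$ four-dimensional, so that the non-trivial content of $p$ lives inside $\Pbf(\bigwedge^{2} W) \cong \PP^{5}$. There, $\Gr(2, W)$ is a smooth quadric whose secant variety fills $\PP^{5}$, and any secant or tangent decomposition of $\omega$ in $\Gr(2, W)$ lifts, by wedging with $v_1 \wedge \cdots \wedge v_{k-2}$, to a distinct decomposition of $p$ in $\Gr(k, V)$. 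Since a generic rank-$4$ bivector in $\bigwedge^2 W$ admits a positive-dimensional family of rank-$2$ decompositions, one gets infinitely many secant or tangent lines through $p$, and non-identifiability follows.

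For identifiability when $d \geq 3$, I would suppose that $p \in \Sigma_d$ lies on two distinct secant lines with endpoints corresponding to $k$-planes $U_1, U_2$ and $U_3, U_4$, and aim to show $\{U_1, U_2\} = \{U_3, U_4\}$. The first step is to recover $W_0 := U_1 + U_2$ intrinsically as the smallest subspace $W \subseteq V$ with $p \in \bigwedge^k W$; the distance hypothesis $\dim(U_1 + U_2) = k + d$ makes this characterisation well-defined and forces $U_3 + U_4 = W_0$, reducing the problem to identifiability at the top orbit of $\Gr(k, k+d)$. The second step is to reconstruct the unordered pair $\{U_1, U_2\}$ from the contraction maps $c_p : \bigwedge^{k-1} W_0^{\vee} \to W_0$ and their iterates: for $d \geq 3$ the rank and kernel strata of these contractions determine the pair uniquely, whereas for $d = 2$ they pin down only $W_0$, because $\bigwedge^2 \CC^4$ carries the Pl\"ucker symmetric form responsible for the extra family of decompositions. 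The parallel analysis for $\Theta_d$ recovers both $U$ and the rank-$d$ map $\phi : U \to V/U$ from the same contractions, and the same data distinguishes secant from tangent decompositions via the dimension of $U_1 \cap U_2$.

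The main obstacle is the uniqueness step at $d \geq 3$: verifying that the contraction invariants above really pin down the pair $\{U_1, U_2\}$, and ruling out the coexistence of a secant decomposition with a tangent one. Both reduce to a careful analysis of the $\SL(W_0)$-stabiliser of $p$ and of the rank loci inside $\Hom(\bigwedge^{k-1} W_0^{\vee}, W_0)$, with the essential jump occurring between $d = 2$ and $d = 3$; this numerical threshold is the technical core of \cite{GS2022}.
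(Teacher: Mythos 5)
First, a point of comparison: the paper does not prove this statement at all --- Theorem~\ref{thm:identifiable-locus-for-Grassmannian} is imported verbatim as the Main Result of \cite{GS2022} and used as a black box (chiefly to restrict attention, in Section~\ref{sec:identifiability}, to secant and tangent lines of Hamming distance or rank at most $2$). So there is no internal proof to measure yours against; I can only assess the argument on its own terms.

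Your non-identifiability argument for $d\le 2$ is sound. The case $d=1$ is immediate, and for $d=2$ the normal form $p=v_1\wedge\cdots\wedge v_{k-2}\wedge\omega$ with $\omega$ a rank-$4$ bivector on a $4$-dimensional $W$ is correct; the positive-dimensional family of splittings $\omega=\alpha\wedge\beta+\gamma\wedge\delta$ (an orbit of the symplectic group of $\omega$ on $W$) wedges with $v_1\wedge\cdots\wedge v_{k-2}$ to give distinct secant lines through $p$, and since $\Sigma_2=\Theta_2$ this also disposes of the tangential statement. The genuine gap is in the identifiability direction for $d\ge 3$, which is the actual content of the theorem. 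The reduction to the support $W_0=U_1+U_2$ of dimension $k+d$ is fine, but the central claim --- that the rank and kernel strata of the contractions $c_p\colon\bigwedge^{k-1}W_0^{\vee}\to W_0$ determine the unordered pair $\{U_1,U_2\}$ uniquely when $d\ge 3$, and likewise recover $(U,\phi)$ in the tangential case while excluding mixed secant/tangent decompositions --- is asserted rather than established, and you explicitly defer it as ``the technical core of \cite{GS2022}.'' Since the whole point of the threshold between $d=2$ and $d=3$ is precisely this uniqueness, the proposal as written is an outline with its decisive step missing: one would need, for instance, an explicit computation of the stabiliser of $v_1\wedge\cdots\wedge v_k+v_1\wedge\cdots\wedge v_{k-d}\wedge v_{k+1}\wedge\cdots\wedge v_{k+d}$ in $\GL(W_0)$, or an inductive contraction argument reducing $(k,d)$ to smaller cases, neither of which is supplied or sketched in enough detail to check.
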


\subsection{The Hilbert scheme of points.} \label{subsec:Hilbert-scheme-of-points}
Let $X$ be a smooth projective variety.
Denote by $X^{[d]} = \mathrm{Hilb}^d(X)$ the \textit{Hilbert scheme of $d$ points} on $X$.

\begin{defn} \label{defn:d-very-ample}
    A line bundle $\Lcal$ on a complete algebraic variety $X$ is \textit{$d$-very ample} if the global sections of $\Lcal$ separate any length $d+1$ subschemes on $X$, i.e., for any length $d+1$ subscheme $Z$ on $X$, we have the surjection
     \[
      H^0(X,\Lcal) \rightarrow H^0(X, \Lcal \otimes \Ocal_Z).
     \]
\end{defn}

\begin{exa} \leavevmode
    \begin{enumerate} [label = (\alph*)]
        \item A line bundle $\Lcal$ is $0$-very ample if and only if it is globally generated; and $\Lcal$ is $1$-very ample if and only if it is very ample.
        \item The tensor product of $k$-very ample and $\ell$-very ample line bundles is $(k+\ell)$-very ample by \cite[Theorem~1.1]{HTT2005}.
    \end{enumerate}
\end{exa}

Suppose that $X$ is equipped with a $(d-1)$-very ample line bundle $\Lcal$.
Then $\Lcal$ determines a morphism $\varphi_{d-1, \Lcal}:X^{[d]} \rightarrow \Gr(d,H^0(X,\Lcal)^{\vee})$ given by
 \[
  \varphi_{d-1, \Lcal}([Z]) = H^0(X, \Lcal \otimes \Ical_Z)
 \]
where $Z \subset X$ is a length $d$ subscheme of $X$ and $\Ical_Z \subset \Ocal_X$ is the ideal sheaf defining $Z$.

\begin{thm} [{\cite[Main Theorem]{CG1990}}] \label{thm:embedding-into-Grassmannian}
    The map $\varphi_{d-1, \Lcal}$ is an embedding if and only if $\Lcal$ is $d$-very ample.
\end{thm}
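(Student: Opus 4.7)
The plan is to prove both implications by analyzing $\varphi_{d-1}$ on closed points and Zariski tangent vectors. First, observe that the map is already well-defined as a morphism once $\Lcal$ is $(d-1)$-very ample: for every length-$d$ subscheme $Z \subset X$ the restriction $H^0(X,\Lcal) \to H^0(Z, \Lcal|_Z)$ is surjective, so $H^0(X, \Lcal \otimes \Ical_Z)$ has constant codimension $d$ in $H^0(X,\Lcal)$, and cohomology and base change shows that the assignment $[Z] \mapsto H^0(X, \Lcal \otimes \Ical_Z)$ varies algebraically with $[Z]$, hence defines a morphism into the Grassmannian.

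For the direction $(\Leftarrow)$, assume $\Lcal$ is $d$-very ample. For injectivity on closed points, given distinct length-$d$ subschemes $Z_1 \ne Z_2$, the schematic union $W = Z_1 \cup Z_2$ strictly contains $Z_1$, so one can pick an intermediate length-$(d+1)$ subscheme $Z_1 \subsetneq W' \subseteq W$. Applying the surjectivity $H^0(\Lcal) \to H^0(\Lcal \otimes \Ocal_{W'})$ guaranteed by $d$-very ampleness produces a section $s$ vanishing on $Z_1$ but nonzero on $W'$; since $W' \subseteq Z_1 \cup Z_2$, such an $s$ cannot vanish on $Z_2$, so $\varphi_{d-1}(Z_1) \ne \varphi_{d-1}(Z_2)$. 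For injectivity on tangent vectors, one uses the identification $T_{[Z]} X^{[d]} = \Hom_{\Ocal_X}(\Ical_Z, \Ocal_Z)$: a nonzero first-order deformation $\sigma$ corresponds to a length-$(d+1)$ subscheme $\widetilde{Z} \subset X \times \Spec \CC[\epsilon]/(\epsilon^2)$, and the surjectivity of $H^0(\Lcal) \to H^0(\Lcal \otimes \Ocal_{\widetilde{Z}})$ forced by $d$-very ampleness translates into the nondegeneracy of the differential $d\varphi_{d-1}$ at $[Z]$.

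For the direction $(\Rightarrow)$, assume $\varphi_{d-1}$ is a closed embedding and fix a length-$(d+1)$ subscheme $W$; the goal is surjectivity of $H^0(\Lcal) \to H^0(\Lcal \otimes \Ocal_W)$. The family of length-$d$ subschemes of $W$ forms a closed subscheme of $X^{[d]}$ (positive-dimensional in the nonreduced case), and one extracts from its embedded image in the Grassmannian enough distinct codimension-$d$ subspaces of $H^0(\Lcal)$, using both point and tangential injectivity of $\varphi_{d-1}$, to force the codimension of $H^0(\Lcal \otimes \Ical_W) = \bigcap_{Z \subset W} H^0(\Lcal \otimes \Ical_Z)$ to be exactly $d+1$, which is equivalent to $d$-very ampleness at $W$.

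The main obstacle is the tangent-space analysis in the nonreduced case. Writing down the differential of $\varphi_{d-1}$ explicitly as the map $\Hom_{\Ocal_X}(\Ical_Z, \Ocal_Z) \to \Hom(H^0(\Lcal \otimes \Ical_Z), H^0(\Lcal)/H^0(\Lcal \otimes \Ical_Z))$ and relating its kernel to sections of $\Lcal$ vanishing on the infinitesimal scheme $\widetilde{Z}$ demands care with the deformation theory of the Hilbert scheme, and it is precisely this step where the increase from $(d-1)$-very ampleness to $d$-very ampleness is needed in order to separate the infinitesimal data associated with $\widetilde{Z}$.
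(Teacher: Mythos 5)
The paper offers no proof of this statement: it is quoted from Catanese--G\"ottsche \cite{CG1990}, so your proposal can only be measured against that source and against the fragments the paper reproves for itself. Your ``if'' direction is correct in outline and matches the standard argument: separating distinct $Z_1\neq Z_2$ via an intermediate length-$(d+1)$ subscheme $Z_1\subsetneq W'\subseteq Z_1\cup Z_2$ and the codimension jump from $d$ to $d+1$ is exactly the mechanism behind the paper's Lemma~\ref{lem:sections-inclusion-implies-inclusion}, and the tangent-space computation you sketch is the one carried out in Section~\ref{subsec:tangent-lines}, where $\ker(\widetilde{t})=H^0(\Lcal\otimes\Jcal)$ for the colength-$(d+r)$ ideal $\Jcal$ lifting $\ker(t)$, so that $d$-very ampleness forces $\widetilde{t}\neq 0$ and hence $d\varphi_{d-1}$ injective. (Your phrase ``length-$(d+1)$ subscheme $\widetilde Z\subset X\times\Spec\CC[\epsilon]/(\epsilon^2)$'' is imprecise --- the relevant object is the colength-$(d+1)$ intermediate ideal $\Jcal'\supseteq\Jcal$ in $X$ itself --- but the idea is right and you correctly flag it as the delicate step.)

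The ``only if'' direction contains a genuine gap. The identity
\[
H^0(\Lcal\otimes\Ical_W)\;=\;\bigcap_{Z\subset W}H^0(\Lcal\otimes\Ical_Z),
\]
with $Z$ ranging over length-$d$ subschemes of $W$, is false in general, and it is the load-bearing step of your plan. For a curvilinear $W$, say $\Ocal_W\simeq\CC[x]/(x^{d+1})$, there is exactly \emph{one} length-$d$ subscheme $Z\subset W$, so the right-hand side is the single codimension-$d$ space $H^0(\Lcal\otimes\Ical_Z)$ and can never have codimension $d+1$; there are no ``enough distinct codimension-$d$ subspaces'' to extract, because there is only one point of $X^{[d]}$ available. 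The repair is purely infinitesimal, and it is not the argument you describe: when $W$ admits a unique length-$d$ subscheme $Z$, the length-one quotient $\Ical_Z/\Ical_W$ is killed by $\Ical_Z$, so $\Ical_Z^2\subseteq\Ical_W\subsetneq\Ical_Z$ and $\Ical_W/\Ical_Z^2$ is the kernel of a rank-one tangent vector $t\in\Hom_{\Ocal_Z}(\Ical_Z/\Ical_Z^2,\Ocal_Z)$; injectivity of $d\varphi_{d-1}$ at $[Z]$ forces $\widetilde{t}\neq 0$, hence $H^0(\Lcal\otimes\Ical_W)=\ker(\widetilde{t})\subsetneq H^0(\Lcal\otimes\Ical_Z)$ has codimension $d+1$, which is $d$-very ampleness at $W$. (The case where $W$ contains two distinct length-$d$ subschemes is handled by point-injectivity exactly as you intend.) So the converse is provable, but by identifying $H^0(\Lcal\otimes\Ical_W)$ with the kernel of the differential at a single point, not with an intersection over subschemes.
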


This embedding is known as the \textit{Grothendieck-Pl\"ucker embedding}.
It is clear that $\varphi_{d-1, \Lcal}$ is nondegenerate.
For later uses, we identify $[Z] \in X^{[d]}$ with $[H^0(X, \Lcal \otimes \Ical_Z)]$, and identify $[W]$ with $[v_1 \wedge \cdots \wedge v_d]$ for a codimension $d$ subspace $W \subset H^0(X, \Lcal)$ and $v_1, \ldots , v_d \in H^0(X,\Lcal)^{\vee}$ defining $W$.

\subsection{Syzygies and Koszul cohomology groups.} \label{subsec:syzygies-and-koszul-cohomologies}
Let $X$ be a smooth projective variety, and let $\Lcal$ be a globally generated line bundle.
We often evaluate the simplicity of $\Lcal$ in terms of its syzygies.
Denote by $S = \Sym^{\bullet} H^0(X, \Lcal)$ the homogeneous polynomial ring.
Define the \textit{section ring} $R = R(X,\Lcal)$ as
 \[
  R = \bigoplus_{k=0}^{n} H^0(X, \Lcal^{k}),
 \]
then it admits a graded $S$-algebra structure.
Recall that $\Lcal$ satisfies $(N_p)$ if the first $p$ terms of the minimal resolution of $R$ are
 \[
  \begin{tikzcd}
      \oplus S(-p-1) \arrow{r} & \cdots \arrow{r} & \oplus S(-2) \arrow{r} & S \arrow{r} & R \arrow{r} & 0.
  \end{tikzcd}
 \]

We present a cohomological criterion, proposed by Ein-Lazarsfeld \cite{EL1993}, that determines whether a given line bundle satisfies $(N_p)$.
Define the \textit{syzygy bundle} $M_{\Lcal}$ of $\Lcal$ by the exact sequence
 \[
  \begin{tikzcd}
      0 \arrow{r} & M_{\Lcal} \arrow{r} & H^0(X, \Lcal) \otimes \Ocal_X \arrow[r, "\mathrm{ev}"] & \Lcal \arrow{r} & 0
  \end{tikzcd}
 \]
where $\mathrm{ev}$ is the evaluation map.

\begin{lem}[{\cite[Lemma~1.6]{EL1993}}] \label{lem:cohomological-criterion-for-np}
    A very ample line bundle $\Lcal$ satisfies $(N_p)$ if
     \begin{equation}
      H^1 \left( X, \bigwedge^q M_{\Lcal} \otimes \Lcal^{k} \right) = 0
     \end{equation}
    for all $q \leq p+1$ and $k \geq 1$.
    The converse also holds if $H^1(X, \Lcal^{k}) = 0$ for $k \geq 1$.
\end{lem}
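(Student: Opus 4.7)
My plan is to translate $(N_p)$ into a Koszul cohomology vanishing statement, then extract that vanishing from cohomology of wedges of $M_\Lcal$ by means of the short exact sequences obtained from wedging the defining sequence of $M_\Lcal$.

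Setting $V = H^0(X, \Lcal)$, I recall that $(N_p)$ is equivalent to projective normality of $(X, \Lcal)$ together with the vanishing of the Koszul cohomology groups $K_{i, j}(X, \Lcal)$ for $1 \leq i \leq p$ and $j \geq 2$, where $K_{i, j}$ is the middle cohomology of
\[ \wedge^{i+1} V \otimes H^0(X, \Lcal^{j-1}) \xrightarrow{d} \wedge^i V \otimes H^0(X, \Lcal^j) \xrightarrow{d} \wedge^{i-1} V \otimes H^0(X, \Lcal^{j+1}). \]
On the sheaf side, since $M_\Lcal$ is locally free, taking exterior powers of the defining sequence yields short exact sequences
\[ 0 \to \wedge^{q} M_\Lcal \otimes \Lcal^{k} \to \wedge^{q} V \otimes \Lcal^{k} \to \wedge^{q-1} M_\Lcal \otimes \Lcal^{k+1} \to 0 \]
for every $q \geq 1$ and $k \geq 0$. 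Passing to $H^0$ and post-composing with the inclusion $H^0(\wedge^{q-1} M_\Lcal \otimes \Lcal^{k+1}) \hookrightarrow \wedge^{q-1} V \otimes H^0(\Lcal^{k+1})$ from the next level, the composite becomes the Koszul differential, so $H^0(X, \wedge^q M_\Lcal \otimes \Lcal^k) = \ker(d)$ in the Koszul complex above.

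For the forward direction, the segment of the long exact sequence coming from the level-$(q+1)$ short exact sequence twisted by $\Lcal^{k-1}$,
\[ \wedge^{q+1} V \otimes H^0(X, \Lcal^{k-1}) \to H^0(X, \wedge^q M_\Lcal \otimes \Lcal^k) \to H^1(X, \wedge^{q+1} M_\Lcal \otimes \Lcal^{k-1}), \]
shows that the vanishing of the rightmost group forces the surjectivity of the first map, which is precisely $K_{q, k}(X, \Lcal) = 0$. Running this over $1 \leq q \leq p$ and $k \geq 2$ (so $q+1 \leq p+1$ and $k-1 \geq 1$) yields all required Koszul vanishings, while the case $q = 0$, using $H^1(X, M_\Lcal \otimes \Lcal^{k-1}) = 0$, recovers projective normality, establishing $(N_p)$. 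For the converse, the additional hypothesis $H^1(X, \Lcal^k) = 0$ extends the same long exact sequence past the $H^1$-term with vanishing $\wedge^{q+1} V \otimes H^1(X, \Lcal^{k-1})$, so the cokernel of the first map above is identified with $H^1(X, \wedge^{q+1} M_\Lcal \otimes \Lcal^{k-1})$, and the Koszul vanishing produced by $(N_p)$ propagates back to the required sheaf-cohomology vanishing.

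The main technical care will be bookkeeping the index ranges $q \leq p+1$, $k \geq 1$ and verifying that the identification $H^0(\wedge^q M_\Lcal \otimes \Lcal^k) = \ker(d)$ is compatible with the convention used to encode $(N_p)$ in terms of $K_{i, j}(X, \Lcal)$; once these are in place, the proof reduces to a single application of the long exact sequence in each direction.
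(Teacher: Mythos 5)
Your argument is correct, and it is essentially the standard one: the paper itself states this lemma as a quoted result from \cite[Lemma~1.6]{EL1993} and gives no proof, but your route --- identifying $H^0\bigl(X,\bigwedge^q M_{\Lcal}\otimes\Lcal^k\bigr)$ with the kernel of the Koszul differential via the wedged-and-twisted defining sequences of $M_{\Lcal}$, and then reading off $K_{q,k}(X,\Lcal)=0$ (respectively, its converse under $H^1(X,\Lcal^{k})=0$) from the long exact sequence --- is precisely how Ein--Lazarsfeld establish it. The index bookkeeping you flag does close up as you describe ($q+1\leq p+1$ and $k-1\geq 1$ exactly exhaust the hypothesis in the forward direction, with the $q=0$ level supplying projective normality), so no gap remains.
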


\noindent The following lemma will be useful for establishing the vanishing above:

\begin{lem}[{\cite[Proposition~2.4]{EL1993}}] \label{lem:non-exact-resolution}
    Assume that $\Lcal$ is $0$-regular with respect to a very ample line bundle $A$, in the sense of Castelnuovo-Mumford.
    Then there exist finite dimensional vector spaces $V_i$ and a complex
     \[
      \begin{tikzcd}
          \cdots \arrow{r} & R_2 = V_2 \otimes A^{-2} \arrow{r} & R_1 = V_1 \otimes A^{-1} \arrow{r} & R_0 = M_{\Lcal} \arrow{r} & 0
      \end{tikzcd}
     \]
    such that $\Hcal_0(R_{\bullet}) = 0$; and $\Hcal_i(R_{\bullet}) = \bigwedge^i N^{\vee} \otimes \Lcal$ for $i \geq 1$, where $N = N_{X/\PP (H^0(A))}$ is the normal bundle of $X \subset \PP (H^0(A))$.
\end{lem}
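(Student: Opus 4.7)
The plan is to construct $R_\bullet$ as the derived restriction to $X$ of a minimal graded free resolution of $\iota_*\Lcal$ on $\PP := \PP(H^0(A))$, where $\iota: X \hookrightarrow \PP$ is the embedding induced by the very ample bundle $A$ (so that $A = \iota^*\Ocal_\PP(1)$), and then to identify the higher homology sheaves via the self-intersection Tor formula for the regular embedding $\iota$.

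First, I observe that the hypothesis $H^i(X, \Lcal \otimes A^{-i}) = 0$ for $i \geq 1$ translates, via the projection formula together with the vanishing of higher direct images for the closed immersion $\iota$, into Castelnuovo--Mumford $0$-regularity of $\iota_*\Lcal$ on $\PP$. Standard regularity theory then produces a \emph{linear} graded free resolution
\[
\cdots \to G_2 \to G_1 \to G_0 \to \iota_*\Lcal \to 0
\]
with $G_k = V_k \otimes \Ocal_\PP(-k)$ for finite-dimensional vector spaces $V_k$ (and $V_0 = H^0(X, \Lcal)$).

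Next, I restrict the truncated complex $\cdots \to G_2 \to G_1 \to G_0 \to 0$ to $X$, obtaining on $X$ the complex
\[
\cdots \to V_2 \otimes A^{-2} \to V_1 \otimes A^{-1} \to V_0 \otimes \Ocal_X \to 0,
\]
whose $k$-th homology sheaf is $\Tor_k^{\Ocal_\PP}(\Ocal_X, \iota_*\Lcal)$. Since $X$ and $\PP$ are both smooth, $\iota$ is a regular (locally complete intersection) embedding, so the self-intersection formula yields $\Tor_k^{\Ocal_\PP}(\Ocal_X, \iota_*\Lcal) = \bigwedge^k N^\vee \otimes \Lcal$ for every $k \geq 0$; in particular $\Hcal_0 = \Lcal$ and $\Hcal_k = \bigwedge^k N^\vee \otimes \Lcal$ for $k \geq 1$.

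Finally, because $\Hcal_0 = \Lcal$, the image of $V_1 \otimes A^{-1} \to V_0 \otimes \Ocal_X$ is exactly $M_\Lcal = \ker(V_0 \otimes \Ocal_X \twoheadrightarrow \Lcal)$. Factoring this map through its image gives the desired complex $R_\bullet$ with $R_0 = M_\Lcal$: the new $\Hcal_0$ vanishes by construction, while the higher homologies are unaffected since $\ker(V_1 \otimes A^{-1} \to V_0 \otimes \Ocal_X) = \ker(V_1 \otimes A^{-1} \to M_\Lcal)$. I expect the main obstacle to be the Tor computation in the third paragraph: one must invoke the local Koszul resolution of $\Ocal_X$ over $\Ocal_\PP$ arising from the smooth (hence locally complete intersection) embedding, and track the indexing carefully so that the $k$-th homology lands on $\bigwedge^k N^\vee \otimes \Lcal$ rather than a shifted version.
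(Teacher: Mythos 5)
Your argument is correct and is essentially the proof of the cited result \cite[Proposition~2.4]{EL1993}: the paper gives no proof of this lemma beyond the citation, and the Ein--Lazarsfeld argument is exactly the one you reconstruct (a linear minimal free resolution of the $0$-regular pushforward $\iota_*\Lcal$ on $\PP(H^0(A))$, restriction to $X$, identification of the homology with $\Tor_k^{\Ocal_{\PP}}(\Ocal_X,\iota_*\Lcal)=\bigwedge^k N^{\vee}\otimes\Lcal$ via the local Koszul resolution of the regular embedding, and replacement of the degree-zero term $H^0(\Lcal)\otimes\Ocal_X$ by the image $M_{\Lcal}$ of the first differential). No gaps.
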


%---------------------------------------------------------------------

\section{Identifiability of the Hilbert scheme of points} \label{sec:identifiability}

Let $X$ be a smooth projective variety of dimension $n$, and let $\Lcal$ be a $d$-very ample line bundle on $X$.
We primarily focus on the properties of the Grothendieck-Pl\"ucker embedding $\varphi_{d-1, \Lcal}:X^{[d]} \hookrightarrow \Gr(d, H^0(\Lcal)^{\vee}) \subset \PP^N$.
Even though the Hilbert scheme $X^{[d]}$ contains a line if $\dim X \geq 2$ and $d \geq 2$ by Lemma~\ref{lem:lines-in-Hilbert-scheme}, we obtain the following identifiability result:

\begin{thm} \label{thm:identifiable-locus-for-Hilbert-scheme-of-points}
    Let $X$ be a smooth projective variety, and let $\Lcal$ be a $4$-very ample line bundle on $X$.
    Under the embedding $\varphi_{1, \Lcal}:X^{[2]} \hookrightarrow \PP^N$, the non-identifiable locus of $\mathrm{Sec} (X^{[2]})$ is exactly $X^{[2]}$.
\end{thm}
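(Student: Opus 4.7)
The plan is to show that every $p \in \Sec(X^{[2]}) \setminus X^{[2]}$ lies on a \emph{unique} secant or tangent line of $X^{[2]}$; the reverse inclusion $X^{[2]} \subset$ (non-identifiable locus) is immediate since any two points of $X^{[2]}$ determine a secant line. Throughout, identify $X^{[2]} \subset \Gr(2,V) \subset \Pbf(\bigwedge^2 V)$ under $\varphi_1$, where $V = H^0(X,\Lcal)^\vee$; for a subscheme $Y \subset X$ write $V(Y) \subset V$ for the annihilator of $H^0(\Lcal \otimes \Ical_Y)$, and let $\omega_p \in \bigwedge^2 V$ denote the bivector representing $p$. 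Since $\Sec(\Gr(2,V))$ is the locus of bivectors of rank at most $4$ and $\Gr(2,V)$ itself is the rank-$2$ locus, $\omega_p$ has rank exactly $2$ or exactly $4$, placing $p$ in one of the two Grassmannian orbits from \Cref{subsec:identifiability-of-grassmannians}. The essential input at every step is the $4$-very ampleness of $\Lcal$: by \Cref{thm:embedding-into-Grassmannian} one has embeddings $X^{[d]} \hookrightarrow \Gr(d,V)$ for $d \le 4$, equivalently $\dim V(Y) = |Y|$ for every subscheme $Y$ of length $\le 5$. This yields the scheme-containment criterion ``$V(Y') \subset V(Y) \Rightarrow Y' \subset Y$'' whenever $|Y \cup Y'| \le 5$, the identities $V(Y \cap Y') = V(Y) \cap V(Y')$ and $V(Y \cup Y') = V(Y) + V(Y')$, and the absence of both trisecant lines and $4$-secant $2$-planes to $X \subset \Pbf(V)$.

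In the rank-$4$ case, I set $U := V(\omega_p) \subset V$, the $4$-dimensional support of $\omega_p$; it is intrinsic to $p$. Any secant line of $X^{[2]}$ through $p$ passes through points $[V(Z_1)], [V(Z_2)]$ with $V(Z_1) \oplus V(Z_2) = U$, forcing $Z_1 \cap Z_2 = \emptyset$ and hence $V(Z_1 \cup Z_2) = U$; the embedding $X^{[4]} \hookrightarrow \Gr(4,V)$ then pins down a \emph{unique} length-$4$ subscheme $Z \subset X$ with $V(Z) = U$, of which both $Z_1, Z_2$ are length-$2$ sub-subschemes. Tangent lines are handled analogously: the tangent point $[Z_0]$ and a tangent direction of Grassmannian rank two together span a $4$-dimensional subspace that must coincide with $U$, forcing $Z_0 \subset Z$ and constraining the tangent direction to lie in $V(Z)$. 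I will then classify $Z$ by scheme type---four reduced points; one double and two reduced; two doubles; a curvilinear triple plus a reduced point; a fat point plus a reduced point; and the various length-$4$ structures supported at a single point---and verify case by case, by a direct computation in $\bigwedge^2 U \simeq \CC^6$, that all candidate secant and tangent lines through $p$ coincide.

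In the rank-$2$ case, $[\omega_p] = [W_0] \in \Gr(2,V) \setminus X^{[2]}$. A secant line $\langle [V(Z_1)], [V(Z_2)] \rangle$ carries rank-$2$ interior points only when $V(Z_1) \cap V(Z_2) \ne 0$, and the line then lies entirely inside $\Gr(2,V)$; a parallel analysis shows that tangent lines through $p$ must also be of Grassmannian rank one and hence lie in $\Gr(2,V)$. Each such line is parametrized by a flag $(L_0, U_0)$ with $L_0 \subset W_0 \subset U_0$, $\dim L_0 = 1$, $\dim U_0 = 3$. For a secant line through $[V(Z_1)], [V(Z_2)]$, one has $L_0 = V(Z_1 \cap Z_2) = \CC \cdot \mathrm{ev}_r$ for some $r \in X$; the no-trisecants property together with $[W_0] \notin X^{[2]}$ force $\Pbf(W_0) \cap X = \{r\}$, so $r$ is uniquely determined by $p$. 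Each remaining choice of $U_0$ corresponds, via $X^{[3]} \hookrightarrow \Gr(3,V)$, to a length-$3$ subscheme $Z^{(3)} \supset \{r\}$ with $V(Z^{(3)}) \supset W_0$. If two such $Z^{(3)}_1 \ne Z^{(3)}_2$ existed, the intersection identity $V(Z^{(3)}_1) \cap V(Z^{(3)}_2) = V(Z^{(3)}_1 \cap Z^{(3)}_2)$ combined with $W_0 \subset V(Z^{(3)}_1) \cap V(Z^{(3)}_2)$ and the length bound $|Z^{(3)}_1 \cap Z^{(3)}_2| \le 2$ would force $W_0 = V(Z^{(3)}_1 \cap Z^{(3)}_2) \in X^{[2]}$, contradicting the hypothesis. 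Hence $Z^{(3)}$ and $U_0$ are unique.

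The most delicate step will be the rank-$4$ case analysis for non-reduced $Z$---especially $Z = 4p$ supported at a single point, where the family of length-$2$ sub-subschemes $Z_0 \subset Z$ is positive-dimensional and each tangent space $T_{[Z_0]} X^{[2]}$ must be computed explicitly in order to check that no two tangent or secant lines of $X^{[2]}$ through $p$ remain distinct in $\Pbf(\bigwedge^2 U)$.
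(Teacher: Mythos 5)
Your reduction is sound and is essentially a repackaging of the paper's argument: your dichotomy on $\rank(\omega_p)$ is the paper's split into the Hamming-distance/rank $1$ and $2$ cases in Propositions~\ref{prop:Secant-lines-intersect}--\ref{prop:Tangent-lines-intersect}, and your identities for $V(\cdot)$ under intersection and union are Lemmas~\ref{lem:sections-inclusion-implies-inclusion}--\ref{lem:Hamming-distance-and-length}. The rank-$2$ half of your argument (uniqueness of the flag $(L_0,U_0)$ via the unique point $r\in\Pbf(W_0)\cap X$ and the unique length-$3$ scheme $Z^{(3)}$ with $V(Z^{(3)})\supset W_0$) is essentially complete and correct; the one step you gloss over is that uniqueness of the \emph{line} does not immediately give uniqueness of the length-$2$ subscheme of $X^{[2]}$ determining it --- you still need to know that a line through $p\notin X^{[2]}$ meets $X^{[2]}$ in a scheme of length at most $2$, which is the ``moreover'' part of Lemma~\ref{lem:lines-in-Hilbert-scheme} (a line meeting $X^{[2]}$ in three or more points lies in $X^{[2]}$; this is exactly what rules out the flag with $U_0=V(\Spec\Ocal_X/\mfr_r^2)$).

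The genuine gap is that the rank-$4$ case is announced but not executed, and that is where most of the mathematical content of the theorem lives. Your observation that $U=V(\omega_p)$ is intrinsic, that the containing length-$4$ scheme $Z$ with $V(Z)=U$ is unique by injectivity of $\varphi_3$, and that every secant or tangent line through $p$ therefore arises from a decomposition $Z=Z_1\sqcup Z_2$ or from a pair $(Z_0,t)$ with $Z_0\subset Z$, $\supp Z_0=\supp Z$ and $\ker t=\Ical_Z/\Ical_{Z_0}^2$, is a correct and clean reduction (the paper uses the same fact when it notes that $H^0(\Lcal\otimes\Ical_{Z_1})+H^0(\Lcal\otimes\Ical_{Z_2})=H^0(\Lcal\otimes\Ical_{Z_3})+H^0(\Lcal\otimes\Ical_{Z_4})$ in Case~2 of Proposition~\ref{prop:Secant-lines-intersect}). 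But the remaining verification --- that for each scheme type of $Z$ at most one such line passes through any given rank-$4$ point of $\Pbf(\bigwedge^2 U)$ --- is precisely the content of Cases 2-a, 2-b, 2-c in each of Propositions~\ref{prop:Secant-lines-intersect}, \ref{prop:Secant-line-and-tangent-line-intersect} and \ref{prop:Tangent-lines-intersect}, and it is not a formality. For instance, for $Z$ supported at a single point with $\Ical_Z=(x^2,y^2)$ locally, \emph{every} length-$2$ subscheme at that point is contained in $Z$, there are no admissible decompositions, and one must determine which $Z_0$ admit surjective $\Ocal_{Z_0}$-linear maps $\Ical_{Z_0}/\Ical_{Z_0}^2\to\Ocal_{Z_0}$ killing $\Ical_Z$ (only two do, each carrying a one-parameter family of tangent lines) and then check by explicit computation in $\bigwedge^2\CC^4$ that no two of these lines share a rank-$4$ point. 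Until those computations are written out --- for reduced $Z$, for $Z$ with one or two double points, for curvilinear pieces, and for the several length-$4$ structures at a single point --- you have a proof strategy, not a proof.
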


The assumption of $4$-very ampleness in Theorem~\ref{thm:identifiable-locus-for-Hilbert-scheme-of-points} is necessary, as seen in the following example:

\begin{exa} \label{exa:higher-very-ampleness-is-necessary}
    Suppose that $\Lcal$ is $3$-very ample and there are five points $p_1, \dots, p_5 \in X$ that lie on a $3$-space.
    Denote by $v_i$ the linear equation $H^0(\Lcal) \rightarrow H^0(\Lcal \otimes \Ocal_{p_i}) \simeq \CC$.
    Then there is a nontrivial relation among them, say
     \[
      v_5 = a_1v_1 + \cdots + a_4v_4
     \]
    for some $a_1, \dots, a_4 \in \CC$.
    Since $\Lcal$ is $3$-very ample, none of them are zero.
    Let $Z_i = \{ p_i, p_5 \}$ be a length $2$ subscheme on $X$ for $1 \leq i \leq 4$.
    Then we have
     \[
      [(a_1v_1 + a_2v_2) \wedge v_5] = [(a_3v_3+a_4v_4) \wedge v_5] \in \langle [Z_1], [Z_2] \rangle \cap \langle [Z_3], [Z_4] \rangle,
     \]
    and thus the two secant lines of $X^{[2]}$ intersect.
    If the intersection point is in $X^{[2]}$, say $[Z] \in X^{[2]}$, then we have
     \[
      H^0(\Lcal \otimes \Ical_Z) \cap H^0(\Lcal \otimes \Ical_{Z_1}) \subset H^0(\Lcal \otimes \Ical_{Z_2})
     \]
    and
     \[
      H^0(\Lcal \otimes \Ical_Z) \cap H^0(\Lcal \otimes \Ical_{Z_2}) \subset H^0(\Lcal \otimes \Ical_{Z_1}).
     \]
    By $3$-very ampleness of $\Lcal$, the support of $Z$ contains $p_1$ and $p_2$, i.e., $Z = \{ p_1,p_2 \}$.
    This is a contradiction because the sections of $\Lcal$ vanishing along $Z$ must vanish along $p_5$ as well.
    Hence the two secant lines intersect out of $X^{[2]}$.
\end{exa}

Although our first main theorem is about $X^{[2]}$, we work on $X^{[d]}$ in a general context to analyze the non-identifiable locus of $\mathrm{Sec}(X^{[d]})$.
Throughout this section, we consider the embedding of $X^{[d]}$ described in Section~\ref{subsec:Hilbert-scheme-of-points}.
Further, we assume that $X^{[d]}$ is smooth, i.e., either $\dim X \leq 2$ or $d \leq 3$ (for the proof, see \cite{Lehn2004}).

%---------------------------------------------------------------------
\subsection{Technical theorems.} \label{subsec:Technical-theorems}

We collect some materials for proving Theorem~\ref{thm:identifiable-locus-for-Hilbert-scheme-of-points}.
Lemma~\ref{lem:sections-inclusion-implies-inclusion}-\ref{lem:Hamming-distance-and-length} indicate that the operations of ideals are compatible with those of $H^0$, provided that the embedding line bundle $\Lcal$ is sufficiently positive.

\begin{lem} \label{lem:sections-inclusion-implies-inclusion}
    Let $\Lcal$ be a $d$-very ample line bundle on $X$.
    Let $Z,W$ be $0$-cycles of length at most $d$ on $X$.
    If $H^0(\Lcal \otimes \Ical_W) \subset H^0(\Lcal \otimes \Ical_Z)$, then $Z \subset W$ holds.
\end{lem}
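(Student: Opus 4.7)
I will argue by contraposition. Suppose that $Z \not\subset W$; the plan is to construct a section of $\Lcal$ vanishing on $W$ but not on $Z$, contradicting the hypothesis $H^0(\Lcal \otimes \Ical_W) \subset H^0(\Lcal \otimes \Ical_Z)$. The main idea is to replace the a priori large scheme $W \cup Z$ (which could have length up to $2d$, beyond the reach of $d$-very ampleness) by a one-step enlargement of $W$ sitting between $W$ and $W \cup Z$, whose length lies in the range controlled by $d$-very ampleness.

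Concretely, first consider the scheme-theoretic union $W \cup Z$, defined by the ideal $\Ical_W \cap \Ical_Z$. The assumption $Z \not\subset W$ is equivalent to $\Ical_W \not\subset \Ical_Z$, hence $\Ical_W \cap \Ical_Z \subsetneq \Ical_W$, i.e.\ $W \subsetneq W \cup Z$ as subschemes. Since the quotient $\Ical_W / (\Ical_W \cap \Ical_Z)$ is a nonzero finite-dimensional $\CC$-vector space (supported on finitely many points), I may choose an intermediate ideal $\Ical_W \cap \Ical_Z \subset J \subsetneq \Ical_W$ of colength one in $\Ical_W$. Writing $W'$ for the subscheme cut out by $J$, we then have $W \subsetneq W' \subset W \cup Z$ and
\[
 \length(W') = \length(W) + 1 \leq d+1.
\]

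Next, I exploit $d$-very ampleness of $\Lcal$, which gives surjectivity $H^0(X,\Lcal) \twoheadrightarrow H^0(X, \Lcal \otimes \Ocal_{W'})$. The short exact sequence $0 \to \Ical_W / \Ical_{W'} \to \Ocal_{W'} \to \Ocal_W \to 0$, twisted by $\Lcal$ and combined with the above surjection by a standard diagram chase, produces a section $\tilde{s} \in H^0(X, \Lcal \otimes \Ical_W)$ whose image in $H^0(X, \Lcal \otimes \Ical_W/\Ical_{W'})$ is a chosen generator; in particular, $\tilde s \notin H^0(X, \Lcal \otimes \Ical_{W'})$.

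Finally, suppose for contradiction that $\tilde s \in H^0(X, \Lcal \otimes \Ical_Z)$. Then $\tilde s \in H^0(X, \Lcal \otimes (\Ical_W \cap \Ical_Z))$, and since $J \supset \Ical_W \cap \Ical_Z$, we conclude $\tilde s \in H^0(X, \Lcal \otimes \Ical_{W'})$, which contradicts the previous paragraph. Hence $\tilde s \notin H^0(X, \Lcal \otimes \Ical_Z)$, contradicting the hypothesis $H^0(X,\Lcal \otimes \Ical_W) \subset H^0(X, \Lcal \otimes \Ical_Z)$. The only nontrivial step here is the extraction of a codimension-one ideal $J$ squeezed between $\Ical_W \cap \Ical_Z$ and $\Ical_W$; this is a purely algebraic observation about finite-colength ideals, but it is the step that aligns length bounds so that $d$-very ampleness applies.
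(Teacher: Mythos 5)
Your proposal is correct and follows essentially the same route as the paper: the paper also passes to a colength-one enlargement of $\Ical_W$ containing $\Ical_W \cap \Ical_Z$ (chosen there as a maximal intermediate ideal) and invokes $d$-very ampleness on the resulting length-$(\leq d+1)$ scheme, merely phrasing the final step as a strict inclusion of spaces of sections rather than exhibiting an explicit separating section $\tilde{s}$. The two arguments are the same in substance.
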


\begin{proof}
    Suppose not.
    Choose an ideal $\Ical$ maximal among ideals $\Ical_Z \cap \Ical_W \subset \Ical \subsetneq \Ical_W$.
    Then $\Ical$ has colength at most $d+1$ and
    \[
        H^0(\Lcal \otimes \Ical)
        \subsetneq H^0(\Lcal \otimes \Ical_W)
        = H^0(\Lcal \otimes \Ical_{Z}) \cap H^0(\Lcal \otimes \Ical_W)
        \subset H^0(\Lcal \otimes \Ical)
    \]
    by $d$-very ampleness, which is a contradiction.
\end{proof}

\begin{lem} \label{lem:sum-of-ideals}
    Let $\Lcal$ be a $d$-very ample line bundle on $X$ and let $Z,W$ be $0$-cycles on $X$.
    If the colength of $\Ical_Z \cap \Ical_W$ is at most $d+1$, then 
     \begin{equation}
        H^0(\Lcal \otimes (\Ical_{Z}+\Ical_{W})) = H^0(\Lcal \otimes \Ical_{Z}) + H^0(\Lcal \otimes \Ical_{W})
     \end{equation}
    in $H^0(\Lcal)$.
\end{lem}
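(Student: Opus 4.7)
The plan is to reduce the lemma to a dimension count. The inclusion $\supseteq$ is automatic, since $\Ical_Z$ and $\Ical_W$ are both contained in $\Ical_Z + \Ical_W$, so I would focus on the reverse inclusion. Set $U := H^0(\Lcal \otimes \Ical_Z)$ and $V := H^0(\Lcal \otimes \Ical_W)$, viewed as subspaces of $H^0(\Lcal)$; note that $U \cap V = H^0(\Lcal \otimes (\Ical_Z \cap \Ical_W))$ because tensoring with the invertible sheaf $\Lcal$ commutes with intersection of subsheaves, and taking global sections is left exact.

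The key input is the \emph{inclusion--exclusion} short exact sequence of $\Ocal_X$-modules
\begin{equation*}
    0 \to \Ocal_X / (\Ical_Z \cap \Ical_W) \to \Ocal_X/\Ical_Z \oplus \Ocal_X/\Ical_W \to \Ocal_X/(\Ical_Z + \Ical_W) \to 0,
\end{equation*}
induced by $a \mapsto (a,a)$ and $(a,b) \mapsto a - b$. Taking lengths yields
\begin{equation*}
    \length(\Ocal_X/(\Ical_Z \cap \Ical_W)) + \length(\Ocal_X/(\Ical_Z + \Ical_W)) = \length(\Ocal_X/\Ical_Z) + \length(\Ocal_X/\Ical_W).
\end{equation*}
Since each of $\Ical_Z$, $\Ical_W$, and $\Ical_Z + \Ical_W$ contains $\Ical_Z \cap \Ical_W$, the colength bound $d+1$ on $\Ical_Z \cap \Ical_W$ propagates to all four ideals.

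With these bounds in hand, $d$-very ampleness of $\Lcal$ gives surjectivity of the evaluation map $H^0(\Lcal) \twoheadrightarrow H^0(\Lcal \otimes \Ocal_X/\Ical)$ for each of the four ideals $\Ical$, so that $\dim H^0(\Lcal \otimes \Ical) = \dim H^0(\Lcal) - \length(\Ocal_X/\Ical)$. Substituting these expressions into $\dim(U+V) = \dim U + \dim V - \dim(U \cap V)$ and applying the length additivity above leaves exactly $\dim(U+V) = \dim H^0(\Lcal) - \length(\Ocal_X/(\Ical_Z + \Ical_W)) = \dim H^0(\Lcal \otimes (\Ical_Z + \Ical_W))$. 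Combined with the obvious inclusion $U + V \subseteq H^0(\Lcal \otimes (\Ical_Z + \Ical_W))$, this forces equality.

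The only place where care is needed is the bookkeeping of colengths: one has to be confident that the single hypothesis on $\Ical_Z \cap \Ical_W$ suffices to apply $d$-very ampleness simultaneously to all four ideals, which is where the containment $\Ical_Z \cap \Ical_W \subseteq \Ical_Z, \Ical_W, \Ical_Z + \Ical_W$ is used. After that, everything reduces to linear algebra on finite-dimensional vector spaces.
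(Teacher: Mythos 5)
Your proposal is correct and follows essentially the same route as the paper: the paper's one-line appeal to $d$-very ampleness for the codimension identity is exactly your combination of the inclusion--exclusion exact sequence on lengths with the surjectivity of the evaluation maps, and both arguments then conclude by comparing $\dim(U+V)$ with $\dim H^0(\Lcal\otimes(\Ical_Z+\Ical_W))$ against the obvious inclusion. You have merely made explicit the bookkeeping that the paper leaves implicit.
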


\begin{proof}
    By $d$-very ampleness, we have
     \begin{align*}
      &\codim H^0(\Lcal \otimes (\Ical_Z + \Ical_W)) + \codim H^0(\Lcal \otimes (\Ical_Z \cap \Ical_W)) \\
      &= \codim H^0(\Lcal \otimes \Ical_Z) + \codim H^0(\Lcal \otimes \Ical_W)
     \end{align*}
    where the codimension is taken in $H^0(\Lcal)$.
    Since
     \begin{equation}
      H^0(\Lcal \otimes (\Ical_Z \cap \Ical_W)) = H^0(\Lcal \otimes \Ical_Z) \cap H^0(\Lcal \otimes \Ical_W),
     \end{equation}
    we have
     \[
      H^0(\Lcal \otimes (\Ical_Z + \Ical_W)) = H^0(\Lcal \otimes \Ical_Z) + H^0(\Lcal \otimes \Ical_W). \qedhere
     \]
\end{proof}

\begin{lem} \label{lem:Hamming-distance-and-length}
    Let $0 \leq d' \leq d$ be an integer.
    Let $\Lcal$ be a $\min \{ d+d', 2d-1 \}$-very ample line bundle on $X$, and let $Z_1, Z_2 \subset X$ be $0$-cycles of length $d$ whose Hamming distance in $X^{[d]}$ is $d'$.
    Then $\Ical_{Z_1} \cap \Ical_{Z_2}$ has colength $d+d'$ and $\Ical_{Z_1} + \Ical_{Z_2}$ has colength $d-d'$.
\end{lem}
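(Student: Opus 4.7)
The plan is to compute $\length(\Ocal_X/(\Ical_{Z_1}\cap\Ical_{Z_2}))$ directly from the Hamming distance hypothesis, after which the colength of $\Ical_{Z_1}+\Ical_{Z_2}$ drops out of the standard short exact sequence
\[
0 \to \Ocal_X/(\Ical_{Z_1}\cap\Ical_{Z_2}) \to \Ocal_X/\Ical_{Z_1} \oplus \Ocal_X/\Ical_{Z_2} \to \Ocal_X/(\Ical_{Z_1}+\Ical_{Z_2}) \to 0,
\]
which gives $\length(\Ocal_X/(\Ical_{Z_1}\cap\Ical_{Z_2})) + \length(\Ocal_X/(\Ical_{Z_1}+\Ical_{Z_2})) = 2d$. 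Write $J = \Ical_{Z_1}\cap\Ical_{Z_2}$ and $V = H^0(X,\Lcal)$ throughout.

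Under the Grothendieck--Pl\"ucker embedding, $[Z_i]$ corresponds to the codimension-$d$ subspace $W_i := H^0(X,\Lcal \otimes \Ical_{Z_i}) \subset V$. Since $\Lcal$ is locally free, intersecting subsheaves of $\Lcal$ is compatible with taking global sections, so $W_1\cap W_2 = H^0(X,\Lcal \otimes J)$, and the definition of Hamming distance recalled in Section~\ref{subsec:identifiability-of-grassmannians} translates to $\codim_V H^0(X,\Lcal \otimes J) = d+d'$. Left-exactness of $H^0$ applied to $0\to \Lcal\otimes J\to \Lcal \to \Lcal\otimes\Ocal_X/J\to 0$ then yields the easy inequality
\[
d+d' = \codim_V H^0(X,\Lcal\otimes J) \leq h^0(X,\Lcal \otimes \Ocal_X/J) = \length(\Ocal_X/J).
\]

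For the reverse direction I argue by contradiction, supposing $\length(\Ocal_X/J) > d+d'$. In the case $d'=d$ this already clashes with the universal bound $\length(\Ocal_X/J) \leq 2d$ from the exact sequence above. In the case $d' \leq d-1$, the hypothesis is $(d+d')$-very ampleness; since $\Ocal_X/J$ has a composition series of length exceeding $d+d'$, I can choose an intermediate ideal $J \subseteq I \subseteq \Ocal_X$ with $\length(\Ocal_X/I) = d+d'+1$. By $(d+d')$-very ampleness this length-$(d+d'+1)$ subscheme is separated by sections of $\Lcal$, forcing $\codim_V H^0(X,\Lcal\otimes I) = d+d'+1$; on the other hand $J\subseteq I$ gives $H^0(X,\Lcal\otimes J) \subseteq H^0(X,\Lcal\otimes I)$, so this codimension is at most $d+d'$, a contradiction. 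This truncation is the main obstacle: a priori $\length(\Ocal_X/J)$ can be as large as $2d$, well beyond the direct reach of the hypothesis applied to $J$ itself, and the key idea is to pass to the smaller colength ideal $I$, which is just inside the range of the assumed $(d+d')$-very ampleness.
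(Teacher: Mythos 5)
Your proof is correct, and it is actually more complete than the argument the paper records. The paper's proof is two sentences: it asserts that it suffices to treat the case $d'=d$, and then settles that case by combining the universal bound $\length(\Ocal_X/(\Ical_{Z_1}\cap\Ical_{Z_2}))\leq 2d$ with $(2d-1)$-very ampleness. You handle $d'=d$ in essentially the same way (the bound $\leq 2d$ meets the easy inequality $\geq d+d'$ coming from the Hamming distance), but for $d'<d$ you do not attempt any reduction to the extremal case; instead you argue directly, and the truncation to an intermediate ideal $J\subseteq I$ of colength exactly $d+d'+1$ is precisely the step needed to bring the a priori bound $\length(\Ocal_X/J)\leq 2d$ within reach of the weaker $(d+d')$-very ampleness hypothesis. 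The paper's claimed reduction to $d'=d$ is not justified and is not obviously valid (one cannot ``split off the common part'' of $Z_1$ and $Z_2$ without already knowing the colength of $\Ical_{Z_1}+\Ical_{Z_2}$, which is what is being computed), so your direct argument is the safer route; it is also entirely in the spirit of the paper, since the same maximal/intermediate-ideal device appears in the proof of Lemma~\ref{lem:sections-inclusion-implies-inclusion}. The only cosmetic remark is that your $d'=d$ case uses no very ampleness beyond what is needed for the Grothendieck--Pl\"ucker embedding to be defined, so your proof in fact works under slightly weaker hypotheses than stated.
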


\begin{proof}
    It suffices to check for $d' = d$.
    Since the colength of $\Ical_{Z_1} \cap \Ical_{Z_2}$ cannot exceed $2d$, the statement is true by $(2d-1)$-very ampleness.
\end{proof}

We determine the lines in the Hilbert scheme $X^{[d]}$ of $d$ points.
First, we describe lines in the Grassmannian without proof.

\begin{prop} \label{prop:lines-in-Grassmannian}
    Let $V$ be a vector space of dimension $n$ over $\CC$ and let $1 \leq k \leq \frac{n}{2}$.
    Under the Pl\"ucker embedding $\Gr(k,V) \hookrightarrow \PP^N$, a line $\ell \subset \Gr(k,V)$ corresponds to a pair of subspaces $W_0 \subset W_1 \subset V$ of dimension $k-1$ and $k+1$, respectively, and a point $[U] \in \ell$ corresponds to a $k$-dimensional subspace $W_0 \subset U \subset W_1$.
\end{prop}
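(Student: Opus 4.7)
The plan is to prove both directions of the correspondence by direct computation in Pl\"ucker coordinates. For the ``if'' direction, given $W_0 \subset W_1 \subset V$ of dimensions $k-1$ and $k+1$, I pick a basis $v_1, \ldots, v_{k+1}$ of $W_1$ with $v_1, \ldots, v_{k-1}$ spanning $W_0$. The $k$-dimensional subspaces $U$ with $W_0 \subset U \subset W_1$ are parametrized by $\Pbf(W_1/W_0) \simeq \PP^1$ via $U_{[\alpha:\beta]} := \Span(v_1, \ldots, v_{k-1}, \alpha v_k + \beta v_{k+1})$, whose Pl\"ucker coordinate
\[
    \alpha(v_1 \wedge \cdots \wedge v_{k-1} \wedge v_k) + \beta(v_1 \wedge \cdots \wedge v_{k-1} \wedge v_{k+1})
\]
traces a line in $\PP^N$.

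For the converse, let $\ell \subset \Gr(k, V)$ be a line and fix distinct points $[U_0], [U_1] \in \ell$. Set $d' := \codim_{U_0}(U_0 \cap U_1) \geq 1$ and choose a basis $v_1, \ldots, v_{k+d'}$ of $U_0 + U_1$ so that $U_0 = \Span(v_1, \ldots, v_k)$ and $U_1 = \Span(v_1, \ldots, v_{k-d'}, v_{k+1}, \ldots, v_{k+d'})$. Once $d' = 1$ is established, setting $W_0 := U_0 \cap U_1$ and $W_1 := U_0 + U_1$ produces the required pair, and the ``if'' direction supplies a line through $[U_0], [U_1]$ that must equal $\ell$; the whole converse thus reduces to ruling out $d' \geq 2$.

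Suppose $d' \geq 2$. For a general point of $\ell$, with Pl\"ucker coordinate
\[
    \omega_{\alpha,\beta} := \alpha(v_1 \wedge \cdots \wedge v_k) + \beta(v_1 \wedge \cdots \wedge v_{k-d'} \wedge v_{k+1} \wedge \cdots \wedge v_{k+d'})
\]
and $\alpha\beta \neq 0$, I plan to show $\omega_{\alpha,\beta}$ is not decomposable, contradicting $\ell \subset \Gr(k, V)$. Recall that $\omega \in \bigwedge^k V$ is decomposable if and only if its annihilator $\{v \in V : v \wedge \omega = 0\}$ has dimension $k$. After extending $v_1, \ldots, v_{k+d'}$ to a basis of $V$ and writing $v = \sum c_j v_j$, the basis monomials arising in $v \wedge \alpha(v_1 \wedge \cdots \wedge v_k)$ all include the index set $\{1, \ldots, k\}$, while those arising in the other summand include $\{1, \ldots, k-d', k+1, \ldots, k+d'\}$ but never simultaneously include all of $\{k-d'+1, \ldots, k\}$; hence the two families of basis wedges are disjoint as soon as $d' \geq 2$. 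Independent vanishing of each coefficient then forces $c_j = 0$ for all $j \geq k-d'+1$, reducing the annihilator to $\Span(v_1, \ldots, v_{k-d'})$ of dimension $k - d' < k$, the desired contradiction. The main step is precisely this combinatorial disjointness check of the two monomial families, which is routine once $d' \geq 2$.
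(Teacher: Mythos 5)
Your argument is correct and complete: the forward direction is the standard pencil computation, and the converse correctly reduces to excluding $d' \geq 2$ via the annihilator criterion for decomposability, where the combinatorial disjointness of the two families of basis wedges is exactly right. The paper itself states Proposition~\ref{prop:lines-in-Grassmannian} explicitly without proof (it is a classical fact), so there is nothing to compare against; your write-up would serve as a valid proof. The only point worth making explicit is that in the converse you implicitly use injectivity of the Pl\"ucker embedding to conclude that every point of $\ell$, being of the form $[\omega_{\alpha,\beta}]$, is the image of the subspace $U_{[\alpha:\beta]}$ and of no other $k$-plane.
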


\begin{lem} \label{lem:lines-in-Hilbert-scheme}
    Assume that $\Lcal$ is a $(d+1)$-very ample line bundle on $X$.
    For given distinct two points $[Z_1], [Z_2] \in X^{[d]}$, the line $\langle [Z_1],[Z_2] \rangle$ is contained in $X^{[d]}$ if and only if $\mathrm{dist}([Z_1],[Z_2]) = 1$ and $Z_1$ differs from $Z_2$ only at one point of $X$, i.e., $\Ocal_{Z_1, p} = \Ocal_{Z_2, p}$ except for a single point $p \in X$.
    Moreover, if a line meets $X^{[d]}$ in at least three distinct points, then it is contained in $X^{[d]}$.
\end{lem}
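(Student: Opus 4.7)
The plan is to pass through the Grassmannian. Under the Grothendieck--Pl\"ucker embedding, Proposition~\ref{prop:lines-in-Grassmannian} describes lines of $\Gr(d, H^0(\Lcal)^{\vee})$ dually as parametrized by nested subspaces $H_1 \subset H_0 \subset H^0(\Lcal)$ of codimensions $d+1$ and $d-1$, and for two points $[Z_1], [Z_2]$ the line $\langle [Z_1], [Z_2] \rangle$ lies in the Grassmannian exactly when $\mathrm{dist}([Z_1], [Z_2]) = 1$. Using $(d+1)$-very ampleness, Lemmas~\ref{lem:sum-of-ideals} and~\ref{lem:Hamming-distance-and-length} identify $H_1 = H^0(\Lcal \otimes (\Ical_{Z_1} \cap \Ical_{Z_2}))$ and $H_0 = H^0(\Lcal \otimes (\Ical_{Z_1} + \Ical_{Z_2}))$, and force $\Ical_{Z_1} \cap \Ical_{Z_2}$ to have colength $d+1$. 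A local length count --- at each $q \in \mathrm{supp}(Z_1 \cup Z_2)$ the nonnegative quantity $\delta(q) := \mathrm{colength}(\Ical_{Z_1,q} \cap \Ical_{Z_2,q}) - \mathrm{colength}(\Ical_{Z_1,q})$ sums to $1$ --- shows that $Z_1$ and $Z_2$ agree as subschemes away from a unique point $p$, with $m := \mathrm{colength}(\Ical_{Z_1,p}) = \mathrm{colength}(\Ical_{Z_2,p})$.

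The direction $(\Rightarrow)$ is then immediate: if $\ell := \langle [Z_1], [Z_2] \rangle \subset X^{[d]} \subset \Gr$, then $\ell$ is a line of the Grassmannian, so $\mathrm{dist}([Z_1], [Z_2]) = 1$ and the local count above supplies the geometric condition. For $(\Leftarrow)$, I construct a $\PP^1$-family of length-$d$ subschemes realizing $\ell$. Set $J_0 = \Ical_{Z_1,p} + \Ical_{Z_2,p}$ (colength $m-1$) and $J_1 = \Ical_{Z_1,p} \cap \Ical_{Z_2,p}$ (colength $m+1$) in $\Ocal_{X,p}$, so $J_0/J_1$ is a $2$-dimensional $\CC$-vector space. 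The crucial local-algebraic step is that $\mathfrak{m}_p$ annihilates $J_0/J_1$: each $\Ocal_{X,p}$-submodule $\Ical_{Z_i,p}/J_1$ is one-dimensional over $\CC$, so $\mathfrak{m}_p$ acts on it by a nilpotent scalar, hence by zero, and the two submodules $\Ical_{Z_1,p}/J_1, \Ical_{Z_2,p}/J_1$ span $J_0/J_1$. It follows that every $\CC$-subspace $J$ with $J_1 \subset J \subset J_0$ is automatically an $\Ocal_{X,p}$-ideal of colength $m$, and gluing with the common ideal sheaf of $Z_1, Z_2$ on $X \setminus \{p\}$ yields a $\PP^1$-family $\{[Z_t]\} \subset X^{[d]}$.

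Since each $\Ical_{Z_t}$ lies between $\Ical_{Z_1} \cap \Ical_{Z_2}$ and $\Ical_{Z_1} + \Ical_{Z_2}$, we get $H_1 \subset H^0(\Lcal \otimes \Ical_{Z_t}) \subset H_0$, so the image of the $\PP^1$-family under $\varphi_{d-1}$ is contained in the line of $\Gr$ determined by $(H_1, H_0)$, namely $\ell$. As the image is an irreducible curve through the distinct points $[Z_1], [Z_2]$, it equals $\ell$, giving $\ell \subset X^{[d]}$. Finally, for the \emph{moreover} assertion, I would use that the Grassmannian is scheme-theoretically cut out by the Pl\"ucker quadrics: a line in $\PP^N$ meeting a quadric in three points is contained in it, so a line meeting $X^{[d]}$ in three points lies in $\Gr$, hence is a line of the Grassmannian through two points of $X^{[d]}$, and the first part finishes. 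The main obstacle is the local-algebraic claim that $\mathfrak{m}_p$ annihilates $J_0/J_1$, which is what ensures the intermediate subspaces are genuine ideals rather than merely $\CC$-subspaces.
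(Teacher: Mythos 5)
Your converse direction is correct and in fact cleaner than the paper's. The key observation that $\mfr_p$ annihilates $J_0/J_1$ --- because each $\Ical_{Z_i,p}/J_1$ is a one-dimensional module over the Artinian local ring $\Ocal_{X,p}/J_1$, on which every element of $\mfr_p$ acts as a nilpotent scalar --- immediately makes every intermediate $\CC$-subspace an honest ideal of colength $m$, and the rest follows. The paper reaches the same conclusion by a longer route: passing to $\hat{\Ocal}_{X,p}$, invoking a lemma of G\"ottsche to produce generators $f_1,f_2$ with $\hat{\Ical}_{Z,p}=\hat{\Ical}_{W,p}+(f_1,f_2)$, and running a case analysis to show the ideals $\hat{\Ical}_{W,p}+(a_1f_1+a_2f_2)$ are pairwise distinct and proper. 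Your version of this half is a genuine simplification.

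The forward direction, however, has a real gap. The asymmetric count $\delta(q)=\mathrm{colength}(\Ical_{Z_1,q}\cap\Ical_{Z_2,q})-\mathrm{colength}(\Ical_{Z_1,q})$ summing to $1$ does \emph{not} show that $Z_1$ and $Z_2$ agree away from a single point: it only locates the unique point where $Z_2$ exceeds $Z_1$, and the point where $Z_1$ exceeds $Z_2$ may be different. Concretely, take $Z_1=\{a_1,\dots,a_{d-1},b\}$ and $Z_2=\{a_1,\dots,a_{d-1},c\}$ reduced with $b\neq c$; then $\mathrm{dist}([Z_1],[Z_2])=1$ and $\sum_q\delta(q)=1$ (concentrated at $c$, where the two local colengths are $0$ and $1$, so your claimed equality $m=\mathrm{colength}(\Ical_{Z_1,p})=\mathrm{colength}(\Ical_{Z_2,p})$ also fails), yet $Z_1$ and $Z_2$ differ at the two points $b$ and $c$ and the line through them is \emph{not} contained in $X^{[d]}$. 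The implication genuinely requires using that a third point of the line lies in $X^{[d]}$, which your argument never invokes: as in the paper, points of $\ell\cap X^{[d]}$ correspond to length-$d$ subschemes squeezed between the length-$(d-1)$ scheme cut out by $\Ical_{Z_1}+\Ical_{Z_2}$ and the length-$(d+1)$ scheme cut out by $\Ical_{Z_1}\cap\Ical_{Z_2}$, and if the two residual points are supported at distinct points of $X$ then $Z_1$ and $Z_2$ are the \emph{only} such subschemes, contradicting the existence of a third point. This repair is also what your "moreover" clause needs, since there one only knows that three points of the line lie in $X^{[d]}$.
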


\begin{proof}
    Suppose that $\langle [Z_1], [Z_2] \rangle \subset X^{[d]}$.
    Since the line is contained in the Grassmannian, the Hamming distance between $[Z_1]$ and $[Z_2]$ must be $1$.
    Let $Z$ and $W$ be the subschemes defined by $\Ical_{Z_1} + \Ical_{Z_2}$ and $\Ical_{Z_1} \cap \Ical_{Z_2}$, respectively.
    According to Lemma~\ref{lem:Hamming-distance-and-length}, $Z$ and $W$ have length $d-1$ and $d+1$, respectively.
    
    By Proposition~\ref{prop:lines-in-Grassmannian} and Lemma~\ref{lem:sections-inclusion-implies-inclusion}, points on the line $\langle [Z_1], [Z_2] \rangle$ correspond to length $d$ subschemes between $Z$ and $W$.
    If $\res(Z,Z_1) \neq \res(Z,Z_2)$, where $\mathrm{res}$ denotes the residue, then those should be either $Z_1$ or $Z_2$.
    This is not the case.
    Hence $Z_1$ and $Z_2$ differ only at one point of $X$.

    For the converse statement, consider $Z$ and $W$ as defined earlier and $p$ as the only point at which $Z_1$ and $Z_2$ differ.
    Then we have $n \geq 2$ a priori.
    Let $R = \hat{\Ocal}_{X,p}$ be the completion of $\Ocal_{X,p}$ with respect to the maximal ideal $\mfr \subset \Ocal_{X,p}$.
    By \cite[Lemma~1.3.2.(1)]{Gottsche1994}, there exist integers $j_1 \geq j_2 \geq 1$ and $f_{k} \in (\hat{\Ical}_{Z,p} \cap \hat{\mfr}^{j_k}) \setminus (\hat{\Ical}_{W,p} \cap \hat{\mfr}^{j_k})$ such that
     \[
      \hat{\Ical}_{W,p} \subsetneq
      \hat{\Ical}_{W,p} + (f_{1}) \subsetneq
      \hat{\Ical}_{W,p} + (f_{1}, f_{2}) =
      \hat{\Ical}_{Z,p}.
     \]
    Observe that $\hat{\Ical}_{W,p}+(f_1)=\hat{\Ical}_{W,p}+\CC f_1$ and $\hat{\Ical}_{Z,p}=\hat{\Ical}_{W,p}+\CC f_1+\CC f_2$ as $\CC$-vector spaces.
    
    Suppose that
    \[
        \hat{\Ical}_{W,p} + (a_1f_1+a_2f_2) = \hat{\Ical}_{W,p} + (a_1'f_1+a_2'f_2)
    \]
    for some $[a_1:a_2]\neq [a_1':a_2']$ in $\PP^1$.
    Then one can write
     \[
      f_1=g + (h+c)(af_1+f_2)
     \]
    for some $g \in \hat{\Ical}_{W,p}$, $h \in \hat{\mfr}$ and $a, c \in \CC$.
    If $c = 0$, we have
     \[
      \hat{\Ical}_{W,p}+(f_1) = \hat{\Ical}_{W,p}+(hf_2)
     \]
    and
     \[
      \hat{\Ical}_{W,p}+(bf_1+f_2)=\hat{\Ical}_{W,p}+(f_2)
     \]
    for any $b \in \CC$.
    If $c \neq 0$, then $ac \neq 1$; otherwise, $f_2 \in \hat{\Ical}_{W,p} + (f_1)$.
    As $\hat{\mfr} f_1 \subset \hat{\Ical}_{W,p}$, we have
    \[
        \hat{\Ical}_{W,p} + (bf_1+f_2)
        =\hat{\Ical}_{W,p} +\left((b(h+c) +1-ac)f_2\right) \supsetneq \hat{\Ical}_{W,p}
    \]
    for any $b \in \CC$.
    The ideal equals $\hat{\Ical}_{W,p} +(f_2)$ unless $b=\frac{ac-1}{c}$; or else, it equals $\hat{\Ical}_{W,p} + (hf_2)$.
    In any cases, a nontrivial ideal between $\hat{\Ical}_{Z,p}$ and $\hat{\Ical}_{W,p}$ is either $\hat{\Ical}_{W,p} + (f_2)$ or $\hat{\Ical}_{W,p} + (hf_2)$.
    Only one of them can be nontrivial because they have different colengths in $R$.
    This contradicts the existence of two nontrivial ideals $\hat{\Ical}_{Z_1,p}$ and $\hat{\Ical}_{Z_2,p}$.

    As a consequence, the ideals $\hat{\Ical}_{W,p} + (a_1f_1+a_2f_2)$ are distinct and strictly contained in $\hat{\Ical}_{Z,p}$.
    Therefore, they correspond to the points on the line in $X^{[d]}$.

    The final assertion holds true, as the line $\langle [Z_1],[Z_2] \rangle$ intersects $X^{[d]}$ in at least three points exactly when the specified two conditions are met.
\end{proof}

The geometric meaning of Lemma~\ref{lem:lines-in-Hilbert-scheme} is that lines in $X^{[d]}$ are those in the divisor $B \subset X^{[d]}$ that are exceptional with respect to the Hilbert-Chow morphism $X^{[d]} \rightarrow X^{(d)}$.

If $X$ is a curve, two length $d$ subschemes $Z_1, Z_2 \subset X$ cannot differ only in one point of $X$.
Hence we have the following:

\begin{cor} \label{cor:No-lines-for-curve}
    If $X$ is a curve, then no three points in $X^{[d]}$ are colinear.
\end{cor}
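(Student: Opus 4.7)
The plan is to apply Lemma~\ref{lem:lines-in-Hilbert-scheme} directly. Suppose for contradiction that $[Z_1], [Z_2], [Z_3] \in X^{[d]}$ are three distinct colinear points. Then the line $\langle [Z_1], [Z_2] \rangle \subset \PP^N$ meets $X^{[d]}$ in at least three points, so by the final assertion of Lemma~\ref{lem:lines-in-Hilbert-scheme} it is entirely contained in $X^{[d]}$. Applying the main part of the lemma to the pair $[Z_1], [Z_2]$, we deduce that $Z_1$ and $Z_2$ coincide outside a single point $p \in X$, while $\Ocal_{Z_1, p} \neq \Ocal_{Z_2, p}$.

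The crucial step, specific to curves, rules out this local dichotomy. Since $X$ is a smooth curve, the local ring $\Ocal_{X, p}$ is a discrete valuation ring with maximal ideal $\mfr$, and its only ideals are the powers $\mfr^k$. Let $e$ be the length of the part of $Z_1$ (equivalently of $Z_2$) supported off $p$; since both $Z_i$ have total length $d$, their local parts at $p$ each have length $d - e$ and are therefore cut out by the unique ideal of that colength, namely $\mfr^{d-e} \subset \Ocal_{X, p}$. Combined with the agreement outside $p$, this forces $Z_1 = Z_2$, a contradiction.

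The argument is essentially immediate once Lemma~\ref{lem:lines-in-Hilbert-scheme} is in hand, so I do not anticipate any serious obstacle. The only subtlety is extracting the correct statement from the lemma and invoking the fact that in a DVR there is a unique ideal of each prescribed colength, which disposes of the single nontrivial possibility allowed by the lemma.
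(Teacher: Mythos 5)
Your proposal is correct and follows the same route as the paper: the paper deduces the corollary from Lemma~\ref{lem:lines-in-Hilbert-scheme} by observing that on a curve two length $d$ subschemes cannot differ at only one point. You merely make explicit the DVR fact (uniqueness of the ideal of a given colength) that the paper leaves implicit, which is a fine and accurate elaboration.
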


%---------------------------------------------------------------------
\subsection{Tangent lines to the Hilbert scheme of points.} \label{subsec:tangent-lines}

To analyze tangent lines, we recall the fact from deformation theory (see \cite[Theorem~3.2]{Lehn2004}): the tangent space of $X^{[d]}$ at $[Z]$ is isomorphic to $\Hom_{\Ocal_Z}(\Ical_Z/\Ical_Z^2, \Ocal_Z)$.
In particular, it can be regarded as a matrix.

Let $r \geq 0$ and let $\Lcal$ be a $(d+r-1)$-very ample line bundle on $X$.
Let $t \in T_{X^{[d]},[Z]}$ be of matrix rank $r$.
Then the composition
 \[
  \widetilde{t}:H^0(\Lcal \otimes \Ical_Z) \rightarrow (\Lcal \otimes \Ical_Z)|_Z \simeq \Ical_Z/\Ical_Z^2 \xrightarrow{t} \Ocal_Z
 \]
has rank $r$ as well.
Indeed, write $\ker(t) = \Jcal/\Ical_Z^2$ for some $\Ical_Z^2 \subset \Jcal \subset \Ical_Z$.
Then $\Jcal$ has colength $d+r$, so $H^0(\Lcal \otimes \Jcal)$ has codimension $d+r$ in $H^0(\Lcal)$ by $(d+r-1)$-very ampleness.
Since $\ker(\widetilde{t}) = H^0(\Lcal \otimes \Jcal)$, it follows that $\widetilde{t}$ has rank $r$ as well.

One might view $t$ as a tangent vector to the Grassmannian, i.e., an element of
 \[
  \Hom_{\CC}(H^0(\Lcal \otimes \Ical_Z), H^0(\Lcal)/H^0(\Lcal \otimes \Ical_Z))
    \simeq H^0(\Lcal)/H^0(\Lcal \otimes \Ical_Z) \otimes H^0(\Lcal \otimes \Ical_Z)^{\vee}.
 \]
Thus one can write
 \[
  [Z] = [v_1 \wedge \cdots \wedge v_d]
 \]
and
 \begin{equation}
  t = \overline{v_{d-r+1}^{\ast}} \otimes \overline{v_{d+1}^{\vphantom{\ast}}} + \cdots + \overline{v_d^{\ast}} \otimes \overline{v_{d+r}^{\vphantom{\ast}}}
 \end{equation}
for some basis $v_1, \dots, v_N \in H^0(\Lcal)^{\vee}$.
Here, we denote by $v_1^{\ast}, \dots, v_N^{\ast} \in H^0(\Lcal)$ the dual basis, $\overline{v_i^{\ast}}$ the image of $v_i^{\ast}$ in $H^0(\Lcal)/H^0(\Lcal \otimes \Ical_Z)$, and $\overline{v_i}$ the image of $v_i$ under the quotient map $H^0(\Lcal)^{\vee} \twoheadrightarrow H^0(\Lcal \otimes \Ical_Z)^{\vee}$.
Note that we can restore $\Ical_Z/\Ical_Z^2 \xrightarrow{t} \Ocal_Z$ from this notation: $\widetilde{t}$ sends $v_{d+i}^{\ast}$ to $\overline{v_{d-r+i}^{\ast}}$ for $1 \leq i \leq r$ and $v_{d+i}^{\ast}$ to $0$ for $i > r$.
From now on, we will omit the bar notation if there is no confusion.
Then points (other than $[Z]$ itself) on the tangent line $\langle t \rangle$ can be written as
 \begin{equation} \label{eqn:points-in-tangent-line}
  \begin{split}
   &[v_1 \wedge \cdots \wedge v_{d-r} \wedge (v_{d+1} \wedge v_{d-r+2} \wedge v_{d-r+3} \wedge \cdots \wedge v_{d-1} \wedge v_{d} \\
   &\qquad + v_{d-r+1} \wedge v_{d+2} \wedge v_{d-r+3} \wedge \cdots \wedge v_{d-1} \wedge v_{d} + \cdots \\
   &\qquad + v_{d-r+1} \wedge v_{d-r+2} \wedge v_{d-r+3} \wedge \cdots \wedge v_{d-1} \wedge v_{d+r})] \in \langle t \rangle.
  \end{split}
 \end{equation}
This coincides with the description in \cite{GS2022}.
In summary, one can observe the following:
\begin{itemize}
    \item[\hypertarget{dagger1}{($\dagger_1$)}] The preimage $\pi^{-1}(\im(t))$ under the map $\pi:H^0(\Lcal) \rightarrow H^0(\Lcal)/H^0(\Lcal \otimes \Ical_Z)$ is defined by $v_1, \dots, v_{d-r}$ in $H^0(\Lcal)$.
    \item[\hypertarget{dagger2}{($\dagger_2$)}] The kernel $\ker(\widetilde{t})$ is defined by $v_1, \dots, v_{d+r}$ in $H^0(\Lcal)$.
    \item[\hypertarget{dagger3}{($\dagger_3$)}] $\supp(\Ocal_X/\Jcal) = \supp(Z)$, where $\ker(t) = \Jcal/\Ical_Z^2$ for some $\Ical_Z^2 \subset \Jcal \subset \Ical_Z$.
\end{itemize}

\begin{cor} \label{cor:secant-and-tangent-are-not-the-same}
    Let $\Lcal$ be a $(d+1)$-very ample line bundle on $X$.
    If a secant line and a tangent line to $X^{[d]}$ coincide, then the line is contained in $X^{[d]}$.
\end{cor}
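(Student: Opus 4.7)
The plan is to split on the position of the tangent point. Let $\ell = \langle [Z_1], [Z_2] \rangle$ coincide with a tangent line $\langle t \rangle$ at some $[Z]$, where $t \in T_{[Z]} X^{[d]}$ has matrix rank $r$. If $[Z] \notin \{[Z_1], [Z_2]\}$, then $\ell$ meets $X^{[d]}$ at the three distinct points $[Z], [Z_1], [Z_2]$, and the final assertion of Lemma~\ref{lem:lines-in-Hilbert-scheme} immediately yields $\ell \subset X^{[d]}$. The essential case is therefore $[Z] = [Z_1]$, to which I restrict.

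The first substep is to reduce to $r = 1$. If $r \geq 2$, then $\ell$ cannot be contained in $\Gr(d, H^0(\Lcal)^\vee)$, since lines in the Grassmannian correspond precisely to rank-$1$ tangent directions at their points. As $\Gr$ is cut out by Pl\"ucker quadrics, all of which pass through $[Z_1]$, and $\ell$ is tangent to $\Gr$ at $[Z_1]$, the restriction of each Pl\"ucker quadric to $\ell \cong \PP^1$ is either identically zero or a degree-$2$ polynomial with a double root at $[Z_1]$. Since $\ell \not\subset \Gr$, at least one Pl\"ucker quadric restricts nontrivially, and its unique zero on $\ell$ is $[Z_1]$. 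Hence $\ell \cap \Gr = \{[Z_1]\}$ set-theoretically, contradicting $[Z_2] \in \ell \cap X^{[d]} \subset \ell \cap \Gr$.

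Once $r = 1$ is established, the image of $t \colon \Ical_{Z_1}/\Ical_{Z_1}^2 \to \Ocal_{Z_1}$ is a $1$-dimensional $\Ocal_{Z_1}$-submodule, hence $\im(t) = \CC_q$ at a single point $q \in \supp(Z_1)$. By $(\dagger_1)$ and $(\dagger_2)$, the tangent line $\ell$ corresponds to a flag of ideals $\Jcal_0 \subset \Ical_{Z_1} \subset \Jcal_1$ of colength $d+1, d, d-1$ respectively, with $\Ical_{Z_1}/\Jcal_0 \cong \Jcal_1/\Ical_{Z_1} \cong \im(t) = \CC_q$; in particular, the length-$2$ $\Ocal_X$-module $\Jcal_1/\Jcal_0$ is supported entirely at the single point $q$. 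Points of $\ell \cap X^{[d]}$ correspond bijectively to $1$-dimensional $\Ocal_X$-submodules of $\Jcal_1/\Jcal_0$, and since $[Z_1]$ and $[Z_2]$ furnish two distinct such submodules, $\Jcal_1/\Jcal_0$ must be semisimple, forcing $\Jcal_1/\Jcal_0 \cong (\Ocal_X/\mathfrak{m}_q)^{\oplus 2}$. Then every $1$-dimensional $\CC$-subspace is automatically $\Ocal_X$-stable, so every point of $\ell$ lies in $X^{[d]}$, proving $\ell \subset X^{[d]}$. The main obstacle is the Pl\"ucker-quadric reduction to $r = 1$; the subsequent local analysis of the length-$2$ module $\Jcal_1/\Jcal_0$ is routine.
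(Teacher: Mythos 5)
Your proof is correct and follows the same skeleton as the paper's: dispose of the case where the tangency point is a third point of the line via the last assertion of Lemma~\ref{lem:lines-in-Hilbert-scheme}, force $\rank(t)=1$ by observing that the line meets the Grassmannian with multiplicity at least $3$ (tangency at $[Z_1]$ plus the point $[Z_2]$) while the Grassmannian is cut out by quadrics, and then conclude in the rank-one case. The paper is terser on the first two points --- it tacitly takes the tangency point to be $[Z_1]$ and simply cites the fact that a line meeting the Grassmannian in three points counted with multiplicity must lie in it --- whereas you spell out the Pl\"ucker-quadric computation in contrapositive form; the content is the same. The genuine divergence is the final step: the paper deduces from $\rank(t)=\mathrm{dist}([Z_1],[Z_2])=1$ that $Z_1$ and $Z_2$ differ at a single point and then invokes the converse direction of Lemma~\ref{lem:lines-in-Hilbert-scheme}, whose proof rests on G\"ottsche's structure result for the local ideals. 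You instead argue directly: identifying points of $\ell$ with codimension-$d$ subspaces pinched between $H^0(\Lcal\otimes\Jcal_0)$ and $H^0(\Lcal\otimes\Jcal_1)$ (via ($\dagger_1$), ($\dagger_2$), Proposition~\ref{prop:lines-in-Grassmannian} and Lemma~\ref{lem:sections-inclusion-implies-inclusion}), you note that two distinct $\Ocal_X$-stable lines in the length-two module $\Jcal_1/\Jcal_0$ supported at $q$ force $\Jcal_1/\Jcal_0\simeq(\Ocal_X/\mfr_q)^{\oplus 2}$, whence every line in the pencil is an ideal. This is a clean, self-contained replacement for the appeal to Lemma~\ref{lem:lines-in-Hilbert-scheme}: that lemma buys more (the full classification of lines in $X^{[d]}$, which the paper needs elsewhere), but for this corollary alone your local semisimplicity argument suffices and is arguably more transparent. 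You are also slightly more careful than the paper in treating the possibility that the tangency point is neither $[Z_1]$ nor $[Z_2]$.
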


\begin{proof}
    Let $[Z_1],[Z_2] \in X^{[d]}$ be distinct two points and let $t \in T_{X^{[d]}, [Z_1]}$ be a nonzero tangent vector such that $\langle t \rangle = \langle [Z_1], [Z_2] \rangle$.
    Since lines intersecting the Grassmannian in at least three points (with multiplicity) must lie in the Grassmannian, we have $\rank(t) = \mathrm{dist}([Z_1], [Z_2]) = 1$.
    Then $t = t|_p$ for some $p \in \supp(Z_1)$, and thus $Z_1$ differs from $Z_2$ only at $p$.
    By Lemma~\ref{lem:lines-in-Hilbert-scheme}, the line is contained in $X^{[d]}$.
\end{proof}

%---------------------------------------------------------------------
\subsection{The proof of Theorem~\ref{thm:identifiable-locus-for-Hilbert-scheme-of-points}.} \label{subsec:proof-of-main-theorem-1}

Let $\Lcal$ be a $(d+2)$-very ample line bundle on $X$.
We aim to classify the non-identifiable locus of the secant variety $\mathrm{Sec}(X^{[d]})$ as well as possible.
However, there are obstructions to the identifiability results for some ranges of $n$ and $d$.

\begin{exa} \label{exa:Hilbert-scheme-of-three-points-weird}
    If $n \geq 2$ and $d=3$, choose local coordinates $x,y,z_1, \dots, z_{n-2}$ at a point $p \in X$.
    Define
     \begin{gather*}
      \Ical_{Z_1} = (y,z_1, \dots, z_{n-2}) + \mfr^3, \qquad \Ical_{Z_2} = (x,z_1, \dots, z_{n-2}) + \mfr^3, \\
      \Ical_{Z_3} = (y+x^2,z_1, \dots, z_{n-2}) + \mfr^3, \\
      \Ical_{Z_4} = (x+y^2,z_1, \dots, z_{n-2}) + \mfr^3
     \end{gather*}
    where $\mfr = \mfr_p$ is the maximal ideal at $p$.
    Then the secant lines $\langle [Z_1], [Z_2] \rangle$ and $\langle [Z_3], [Z_4] \rangle$ meet out of $X^{[3]}$.
\end{exa}

In the remaining part of this section, we assume that either $n = 1$ or $d = 2$.
Note that if two (unordered) pairs of distinct points yield the same secant line, then they are contained in $X^{[d]}$ by Lemma~\ref{lem:lines-in-Hilbert-scheme}.
Also, if a secant line and a tangent line coincide, then they are contained in $X^{[d]}$ by Corollary~\ref{cor:secant-and-tangent-are-not-the-same}.
In particular, tangent lines to two distinct points do not coincide unless they are contained in $X^{[d]}$.

\begin{prop} \label{prop:Secant-lines-intersect}
    Two distinct secant lines to $X^{[d]}$ do not intersect out of $X^{[d]}$.
\end{prop}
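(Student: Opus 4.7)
The plan is to assume, toward contradiction, that distinct secant lines $\ell_1 = \langle [Z_1], [Z_2] \rangle$ and $\ell_2 = \langle [Z_3], [Z_4] \rangle$ meet at some $p \in \PP^N \setminus X^{[d]}$, and to split into cases according to the Hamming distances $d_{12} = \mathrm{dist}([Z_1], [Z_2])$ and $d_{34} = \mathrm{dist}([Z_3], [Z_4])$ in the ambient Grassmannian $\Gr(d, H^0(\Lcal)^{\vee})$. By the last clause of Lemma~\ref{lem:lines-in-Hilbert-scheme}, any line meeting $X^{[d]}$ in three or more points already lies in $X^{[d]}$, so $\ell_i \cap X^{[d]} = \{[Z_i], [Z_{i+1}]\}$ and $p \neq [Z_j]$ for every $j$. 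The Grothendieck-Pl\"ucker dictionary of Lemmas~\ref{lem:sections-inclusion-implies-inclusion}--\ref{lem:Hamming-distance-and-length} will be used throughout to translate ideal-theoretic operations into operations on the corresponding evaluation subspaces.

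First I would handle the cases where at least one of the $\ell_i$ sits inside the Grassmannian. In the \emph{mixed} case $d_{12} = 1$, $d_{34} \geq 2$, Proposition~\ref{prop:lines-in-Grassmannian} gives $\ell_1 \subset \Gr$ while the endpoints of $\ell_2$ lie too far apart for $\ell_2$ to be contained in $\Gr$; but $p \in \ell_1 \subset \Gr$ then makes $\ell_2 \cap \Gr$ contain the three distinct points $[Z_3], [Z_4], p$, so the Pl\"ucker quadrics defining $\Gr$ force $\ell_2 \subset \Gr$, a contradiction. When $d_{12} = d_{34} = 1$, both lines lie in $\Gr$ and correspond via Proposition~\ref{prop:lines-in-Grassmannian} to flags $L_i \subset H_i$, with $L_i$ the evaluation space at the length-$(d-1)$ scheme $Z_i \cap Z_{i+1}$ and $H_i$ the evaluation space at the length-$(d+1)$ scheme $Z_i \cup Z_{i+1}$. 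Any $U \in \ell_1 \cap \ell_2$ satisfies $L_1 + L_2 \subset U \subset H_1 \cap H_2$, and the Grothendieck-Pl\"ucker dictionary identifies these bounding subspaces with the evaluation spaces at $(Z_1 \cap Z_2) \cup (Z_3 \cap Z_4)$ and $(Z_1 \cup Z_2) \cap (Z_3 \cup Z_4)$, respectively. Since two distinct projective lines meet in at most one point, a dimension count shows the unique candidate $U$ is always realized by an honest length-$d$ subscheme $Z \subset X$, giving $p \in X^{[d]}$, a contradiction.

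The principal case is $d_{12}, d_{34} \geq 2$, which for $d = 2$ forces $d_{12} = d_{34} = 2$. Then the $2$-form representing $p$ in $\wedge^2 H^0(\Lcal)^{\vee}$ has rank exactly $4$, and the uniqueness of its $4$-dimensional support together with Lemma~\ref{lem:sections-inclusion-implies-inclusion} produces a common length-$4$ subscheme $Y := Z_1 \cup Z_2 = Z_3 \cup Z_4$, of which $\{Z_1, Z_2\}$ and $\{Z_3, Z_4\}$ are two distinct partitions into disjoint length-$2$ subschemes. A local analysis shows that any non-reduced component of length $\geq 2$ is indivisible into two disjoint length-$2$ pieces, so $Y$ must consist of four distinct reduced points; but then the four Pl\"ucker bivectors $\omega_i$ occupy four \emph{different} coordinate directions of the basis $\{v_i \wedge v_j\}$ of $\wedge^2 V_Y$, which is linearly independent by $3$-very ampleness of $\Lcal$, contradicting the linear relation among the $\omega_i$ imposed by $p \in \ell_1 \cap \ell_2$. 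For the curve case $n = 1$ with some $d_{ij} \geq 3$, one invokes Theorem~\ref{thm:identifiable-locus-for-Grassmannian}: the non-identifiability of $p$ (it lies on two distinct secants) places it in $\Sigma_1 \cup \Sigma_2 \cup \Theta_2 \subset \Sec(\Gr)$, each sub-stratum of which reduces either to the arguments above or to an entirely analogous rank-and-support analysis.

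The main obstacle I anticipate is the careful partition analysis of length-$4$ subschemes in the principal case, together with ensuring in the $d_{12} = d_{34} = 1$ case that every candidate intersection subspace $U$ is genuinely of the form $H^0(\Lcal \otimes \Ical_Z)^{\perp}$ for some length-$d$ subscheme $Z$ rather than a purely abstract linear subspace; both rely crucially on the $(d+2)$-very ampleness of $\Lcal$, precisely the hypothesis whose absence produces the counterexample phenomenon in Example~\ref{exa:higher-very-ampleness-is-necessary}.
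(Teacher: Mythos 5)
Your overall architecture matches the paper's: reduce to $d_{12}=d_{34}\in\{1,2\}$ using the orbit stratification of $\Sec(\Gr(d,V))$, treat distance $1$ via the flag description of lines in the Grassmannian, and treat distance $2$ by explicit multilinear algebra. Your distance-$1$ argument is essentially the paper's (though the inclusions should read $H_1+H_2\subset U\subset L_1\cap L_2$ in your notation, since a smaller scheme has a larger evaluation space), and your distance-$2$ argument for $d=2$ --- extract the length-$4$ scheme $Y=Z_1\cup Z_2=Z_3\cup Z_4$ from the rank-$4$ support of $p$ via Lemma~\ref{lem:sections-inclusion-implies-inclusion}, observe that two distinct partitions of $Y$ into disjoint length-$2$ pieces force $Y$ to consist of four reduced points, and then derive a contradiction from the four distinct basis bivectors --- is correct and in fact replaces the paper's three sub-cases (which split on $d_{13}$, $d_{23}$ and use a coplanarity argument for $\ell_{13}$ and $\ell_{24}$) by a single cleaner computation.

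The genuine gap is the curve case. The proposition is asserted under the standing hypothesis ``$n=1$ or $d=2$,'' so it covers $X$ a curve with $d$ arbitrary, and there $d_{12}=d_{34}=2$ can occur with $d\geq 3$. Your ``principal case'' is explicitly carried out only for $d=2$ (where disjointness of the supports of $Z_1$ and $Z_2$ is automatic and the bivector lives in $\bigwedge^2$ of a genuine $4$-dimensional space), while the curve case is dispatched with ``an entirely analogous rank-and-support analysis,'' which is not a proof; this is exactly where the paper spends most of its effort. Your partition idea does extend --- for $n=1$ the kernel of $\wedge p$ recovers $D:=Z_1\cap Z_2=Z_3\cap Z_4$ of degree $d-2$ and the support of $p$ recovers $Z_1\cup Z_2=Z_3\cup Z_4$ of degree $d+2$, so $F:=(Z_1\cup Z_2)-D$ is a degree-$4$ divisor with two distinct decompositions into disjoint degree-$2$ pieces $E_i=Z_i-D$, forcing $F$ to be four distinct points, after which the basis-bivector argument runs in the quotient of $H^0(\Lcal)^{\vee}$ by $\ker(\wedge p)$, where the relevant functionals stay independent by $(d+1)$-very ampleness --- but this has to be written out, since as it stands the crux case for curves is missing. (Also, $\Theta_2$ plays no role in this proposition: a point of a punctured secant line with distance-$2$ endpoints lies in $\Sigma_2$, and tangent lines are handled separately in Propositions~\ref{prop:Secant-line-and-tangent-line-intersect} and~\ref{prop:Tangent-lines-intersect}.)
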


\begin{proof}
Let $[Z_i] \in X^{[d]}$, $1 \leq i \leq 4$ be four points.
For simplicity, let $\ell_{ij}$ denote the line $\langle [Z_i], [Z_j] \rangle$ and $d_{ij}$ denote the distance $\mathrm{dist}([Z_i], [Z_j])$ for $1 \leq i, j \leq 4$.
Assume that $\ell_{12}$ and $\ell_{34}$ meet outside $X^{[d]}$.
From Section~\ref{subsec:identifiability-of-grassmannians}, we infer that $d_{12} = d_{34} \in \{ 1,2 \}$.

\hypertarget{Case1-1}{\textit{Case 1.}} Assume that 
$d_{12} = d_{34} = 1$.
Let $Z_{12}$ (resp. $Z_{34}$) be the length $d-1$ subscheme defined by $\Ical_{Z_1} + \Ical_{Z_2}$ (resp. $\Ical_{Z_3} + \Ical_{Z_4}$); and let $W_{12}$ (resp. $W_{34}$) be the length $d+1$ subscheme defined by $\Ical_{Z_1} \cap \Ical_{Z_2}$ (resp. $\Ical_{Z_3} \cap \Ical_{Z_4}$).
By Proposition~\ref{prop:lines-in-Grassmannian}, the points $[V] \in \ell_{12}$ correspond to the subspaces $V \subset H^0(\Lcal)$ of codimension $d$ with
 \[
  H^0(\Lcal \otimes \Ical_{W_{12}}) \subset V \subset H^0(\Lcal \otimes \Ical_{Z_{12}}),
 \]
and a similar result holds for $\ell_{34}$.
Let $[V] \in \ell_{12} \cap \ell_{34}$ be the intersection point.
Since the two lines are distinct, we have either $Z_{12} \neq Z_{34}$ or $W_{12} \neq W_{34}$.

If $Z_{12} \neq Z_{34}$, it follows that
 \[
  V = H^0(\Lcal \otimes \Ical_{Z_{12}}) \cap H^0(\Lcal \otimes \Ical_{Z_{34}})
    = H^0(\Lcal \otimes (\Ical_{Z_{12}} \cap \Ical_{Z_{34}})),
 \]
so $[V]$ belongs to $X^{[d]}$.

If $W_{12} \neq W_{34}$, then they have Hamming distance $1$ in $X^{[d+1]}$.
According to Lemma~\ref{lem:Hamming-distance-and-length}, $\Ical_{W_{12}} \cap \Ical_{W_{34}}$ has colength $d+2$.
Then by Lemma~\ref{lem:sum-of-ideals}, 
 \[
  V = H^0(\Lcal \otimes \Ical_{W_{12}}) + H^0(\Lcal \otimes \Ical_{W_{34}}) = H^0(\Lcal \otimes (\Ical_{W_{12}} + \Ical_{W_{34}})).
 \]
Hence we would get a contradiction in any cases.

\hypertarget{Case1-2}{\textit{Case 2.}} Now assume that $d_{12} = d_{34} = 2$.
Write
 \[
  [Z_1] = [v_1 \wedge \cdots \wedge v_{d-2} \wedge v_{d-1} \wedge v_d], \qquad
  [Z_2] = [v_1 \wedge \cdots \wedge v_{d-2} \wedge v_{d+1} \wedge v_{d+2}]
 \]
for some linearly independent vectors $v_1, \dots, v_{d+2} \in H^0(\Lcal)^{\vee}$.
We will apply appropriate linear coordinate changes to $v_i$ as new expressions emerge.
The intersection point $P = \ell_{12} \cap \ell_{34}$ is
 \[
  [v_1 \wedge \cdots \wedge v_{d-2} \wedge (v_{d-1} \wedge v_d + v_{d+1} \wedge v_{d+2})],
 \]
so the annihilator of the kernel of $\wedge P : H^0(\Lcal)^{\vee}\rightarrow \bigwedge^{d+1} H^0(\Lcal)^{\vee}$ is
\[
    H^0(\Lcal \otimes \Ical_{Z_1})+H^0(\Lcal \otimes \Ical_{Z_2})=H^0(\Lcal \otimes \Ical_{Z_3})+H^0(\Lcal \otimes \Ical_{Z_4}).
\]
In particular, $d_{ij} \leq 2$ for any $i,j$.

\hypertarget{Case1-2-a}{\textit{Case 2-a.}} If $d_{13} = d_{23} = 1$, the expression for $[Z_3]$ would be
 \[
  [Z_3] = [v_1 \wedge \cdots \wedge v_{d-2} \wedge v_{d-1} \wedge v_{d+1}].
 \]
Since $[Z_4] \in \langle P, [Z_3] \rangle$, the vector
 \begin{align*}
  &v_1 \wedge \cdots \wedge v_{d-2} \wedge (v_{d-1} \wedge v_d + v_{d+1} \wedge v_{d+2} + av_{d-1} \wedge v_{d+1}) \\
  &= v_1 \wedge \cdots \wedge v_{d-2} \wedge (v_{d-1} \wedge (v_d + av_{d+1}) + v_{d+1} \wedge v_{d+2})
 \end{align*}
should be decomposable for some $0 \neq a \in \CC$.
But since the vectors $v_1, \dots, v_{d-1}$, $v_{d}+av_{d+1}$, $v_{d+1}$ and $v_{d+2}$ are linearly independent, this is not the case.

\hypertarget{Case1-2-b}{\textit{Case 2-b.}} If $d_{13} = 1$ and $d_{23} = 2$, one can write
 \[
  [Z_3] = [v_1 \wedge \cdots \wedge v_{d-2} \wedge v_{d-1} \wedge (v_{d} + v_{d+1})]
 \]
and
 \[
  [Z_4] = [v_1 \wedge \cdots \wedge v_{d-2} \wedge v_{d+1} \wedge (v_{d-1} + v_{d+2})].
 \]
Hence $d_{13} = d_{24} = 1$ and $d_{14} = d_{23} = 2$.
Since the four points $[Z_i]$ are coplanar, $\ell_{13}$ and $\ell_{24}$ must intersect.
They intersect within $X^{[d]}$ by Case~\hyperlink{Case1-1}{1}, and thus they are contained in $X^{[d]}$ by Lemma~\ref{lem:lines-in-Hilbert-scheme}.
In particular, all the $Z_i$ have the same supports.

The case $n = 1$ does not happen by Corollary~\ref{cor:No-lines-for-curve}.

If $d = 2$, we get a contradiction as $\supp(Z_1)$ is disjoint from $\supp(Z_2)$.

\hypertarget{Case1-2-c}{\textit{Case 2-c.}} Finally, suppose that $d_{13} = d_{23} = 2$.
We may assume that $d_{14} = d_{24} = 2$, otherwise we can argue as in Case \hyperlink{Case1-2-a}{2-a} and \hyperlink{Case1-2-b}{2-b}.

If $n=1$, from the equality
 \[
  H^0(\Lcal \otimes (\Ical_{Z_1} \cap \Ical_{Z_2})) = H^0(\Lcal \otimes (\Ical_{Z_3} \cap \Ical_{Z_4})),
 \]
we have $Z_1 \cap Z_2 = Z_1 \cap Z_3$ and $Z_1 \cup Z_2 = Z_1 \cup Z_3$.
Hence we have
 \[
  Z_2 = (Z_1 \cup Z_2) + (Z_1 \cap Z_2) - Z_1 = (Z_1 \cup Z_3) + (Z_1 \cap Z_3) - Z_1 = Z_3
 \]
as divisors on $X$, which is a contradiction.

When $d = 2$, the supports of $Z_i$ are pairwise disjoint for $1 \leq i \leq 4$.
As above, we have $Z_1 \cup Z_2 = Z_3 \cup Z_4$ by $4$-very ampleness, which is impossible.
\end{proof}

\begin{prop} \label{prop:Secant-line-and-tangent-line-intersect}
    A secant line and a tangent line to $X^{[d]}$ do not intersect out of $X^{[d]}$.
\end{prop}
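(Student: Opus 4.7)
The plan is to adapt the argument of \Cref{prop:Secant-lines-intersect}, replacing one of the two secant lines by a tangent line. If $\ell_{12}=\ell_t$ then by \Cref{cor:secant-and-tangent-are-not-the-same} this common line is contained in $X^{[d]}$, so $P\in X^{[d]}$; hence $\ell_{12}\neq\ell_t$ and $P$ is non-identifiable in $\Sec(\Gr)$. Combining \Cref{thm:identifiable-locus-for-Grassmannian} with the orbit diagram of \Cref{subsec:identifiability-of-grassmannians}, and using that $P\notin\{[Z_1],[Z_2],[Z_3]\}\subset X^{[d]}$, we reduce to $(d_{12},r)\in\{(1,1),(2,2)\}$: if $d_{12}\neq r$ then one of $\ell_{12},\ell_t$ lies in $\Gr$ while the other meets $\Gr$ only in points of $X^{[d]}$, forcing $P\in X^{[d]}$; if $d_{12}\geq 3$ or $r\geq 3$, the orbits $\Sigma_{d_{12}}$ and $\Theta_r$ of the respective generic points are disjoint, since $\Sigma_d=\Theta_d$ only for $d\in\{1,2\}$.

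For the case $d_{12}=r=1$, both lines lie in $\Gr$ and are described by flags of codimensions $(d+1,d,d-1)$ in $H^0(\Lcal)$. The secant flag is $H^0(\Lcal\otimes\Ical_{Z_1\cup Z_2})\subset V_P\subset H^0(\Lcal\otimes\Ical_{Z_1\cap Z_2})$. Since the rank-$1$ tangent $t$ is $\Ocal_{Z_3}$-linear, its image $\im t\subset\Ocal_{Z_3}$ and the kernel of $\widetilde{t}$ are honest ideals, so the tangent flag has the form $H^0(\Lcal\otimes\Ical_{Z_3^+})\subset V_P\subset H^0(\Lcal\otimes\Ical_{Z_3^-})$ for a length-$(d-1)$ subscheme $Z_3^-\subset Z_3$ and a length-$(d+1)$ subscheme $Z_3^+\supset Z_3$. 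According to which ends of the two flags coincide, a codimension count identifies $V_P$ as either $H^0(\Lcal\otimes\Ical_{Z_1\cup Z_2})+H^0(\Lcal\otimes\Ical_{Z_3^+})$ or $H^0(\Lcal\otimes\Ical_{Z_1\cap Z_2})\cap H^0(\Lcal\otimes\Ical_{Z_3^-})$, and \Cref{lem:sum-of-ideals} together with $(d+2)$-very ampleness rewrites this as $H^0(\Lcal\otimes\Ical_Z)$ for a genuine length-$d$ subscheme $Z$, contradicting $P\notin X^{[d]}$.

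For the case $d_{12}=r=2$, neither line lies in $\Gr$ and $P\in\Sigma_2\setminus\Gr$. Reading the $(d-2)$-dimensional annihilator of $P$ in $H^0(\Lcal)^{\vee}$ from the secant side and from the tangent side gives $Z_1\cap Z_2=Z_3^-$, where $Z_3^-\subset Z_3$ is now the length-$(d-2)$ subscheme cut out by the rank-$2$ image $\im t$. Similarly, the $(d+2)$-dimensional support of $P$ yields
\[
    U_{Z_1\cup Z_2}\;=\;U_{Z_3}+\Pi_t,
\]
where $\Pi_t\subset H^0(\Lcal)^{\vee}$ is any $2$-dimensional lift of $\im t\subset H^0(\Lcal)^{\vee}/U_{Z_3}$. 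Under either hypothesis $n=1$ or $d=2$, a case analysis of the possible shapes of $Z_1,Z_2,Z_3$ and the tangent direction yields the contradiction: either $\Pi_t$ cannot lie in $U_{Z_1\cup Z_2}$ by the separation property of $4$-very ampleness (ruling out ``generic'' tangent directions), or $\Pi_t$ does lie in $U_{Z_1\cup Z_2}$, in which case a direct computation of the residual rank-$2$ bivector in the $4$-dimensional quotient $V_4=(U_{Z_3}+\Pi_t)/U_{Z_3^-}$ shows that no scalar combination of $\omega_{Z_3}$ and the rank-$2$ tangent bivector equals $\omega_{Z_1}+\omega_{Z_2}$.

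The principal obstacle is this last case: unlike in \Cref{prop:Secant-lines-intersect}, the tangent line has no ``second endpoint'' in $X^{[d]}$, so one must work intrinsically with the residual rank-$2$ bivector in $\bigwedge^2 V_4$ and carefully distinguish evaluation from tangent functionals. The hypothesis $n=1$ or $d=2$ enters precisely here, paralleling its role in \Cref{prop:Secant-lines-intersect}.
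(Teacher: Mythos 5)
Your overall strategy coincides with the paper's: reduce to $d_{12}=r\in\{1,2\}$ via the Grassmannian orbit stratification, treat the rank-$1$ case by comparing the two flags of codimensions $(d+1,d,d-1)$ and rewriting the intersection point as $H^0(\Lcal\otimes\Ical_Z)$ via \Cref{lem:sum-of-ideals} and $d$-very ampleness, and treat the rank-$2$ case by comparing bivector representations of the intersection point. The initial reduction and the rank-$1$ case are sound and match the paper's Case 1 (which is itself deferred to the argument of \Cref{prop:Secant-lines-intersect}).

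The genuine gap is the rank-$2$ case, which is where essentially all of the content of the paper's proof lies and which you yourself flag as ``the principal obstacle'' without resolving. Concretely: (i) you never subdivide according to the Hamming distances from the tangent point to the two secant endpoints (each is at most $2$ by $(\dagger_1)$); it is exactly these distances that determine the normal forms of the three subschemes and of $t$ in a common basis, and without such normal forms the asserted ``direct computation of the residual rank-$2$ bivector'' cannot even be set up. (ii) The claim that no scalar combination of the tangent bivector and the base point equals $\omega_{Z_1}+\omega_{Z_2}$ is precisely what must be proved in each sub-case; the paper verifies it by ad hoc devices (wedging with $(v_d-v_{d+1})\wedge v_{d+2}$ when the two distances are $1$ and $2$, and splitting $t=t|_p\oplus t|_q$ when both are $1$), and it is not a formal consequence of anything you state. (iii) The sub-case in which all pairwise distances equal $2$ is excluded not by a bivector computation but by the support condition $(\dagger_3)$ (the length-$(d+2)$ scheme cut out by $\ker t$ is supported on the support of the tangent base point, whereas the secant side forces it to coincide with the union of the two disjoint secant endpoints); your appeal to ``the separation property of $4$-very ampleness'' gestures at this but is not carried out. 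As written, the proposal establishes the routine half of the proposition and defers the substantive half.
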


\begin{proof}

Suppose that three distinct points $[Z_1], [Z_2], [Z_3] \in X^{[d]}$ and a tangent vector $t \in T_{X^{[d]}, [Z_1]}$ are given that $\langle [Z_2], [Z_3] \rangle$ and $\langle t \rangle$ meet outside $X^{[d]}$.
Denote by $d_{ij}$ the distance $\mathrm{dist}([Z_i], [Z_j])$ for $1 \leq i, j \leq 3$.
As before, we have $d_{23} = \rank(t) \in \{ 1,2 \}$.

\hypertarget{Case2-1}{\textit{Case 1.}}
Assume that $d_{23} = \rank(t) = 1$.
Let $Z_{23}$ be the length $d-1$ subscheme defined by $\Ical_{Z_2} + \Ical_{Z_3}$, and let $W_{23}$ be the length $d+1$ subscheme defined by $\Ical_{Z_2} \cap \Ical_{Z_3}$.
Let $\im(t) = \Ical/\Ical_{Z_1}$ for some $\Ical_{Z_1} \subset \Ical \subset \Ocal_X$, and let $\ker(t) = \Jcal/\Ical_{Z_1}^2$ for some $\Ical_{Z_1}^2 \subset \Jcal \subset \Ical_{Z_1}$.
Let $Z_0$ be the subscheme of length $d-1$ defined by $\Ical$, and let $W_0$ be the subscheme of length $d+1$ defined by $\Jcal$.
By Proposition~\ref{prop:lines-in-Grassmannian}, the points $[V] \in \langle t \rangle$ correspond to the subspaces $V \subset H^0(\Lcal)$ of codimension $d$ with
 \[
  H^0(\Lcal \otimes \Ical_{W_0}) \subset V \subset H^0(\Lcal \otimes \Ical_{Z_0}).
 \]
A similar argument as in Proposition~\ref{prop:Secant-lines-intersect} would give a contradiction.

\hypertarget{Case2-2}{\textit{Case 2.}} Now assume that $d_{23} = \rank(t) = 2$.
Write
 \[
  [Z_2] = [v_1 \wedge \cdots \wedge v_{d-2} \wedge v_{d-1} \wedge v_d], \qquad
  [Z_3] = [v_1 \wedge \cdots \wedge v_{d-2} \wedge v_{d+1} \wedge v_{d+2}]
 \]
for some linearly independent vectors $v_1, \dots, v_{d+2} \in H^0(\Lcal)^{\vee}$.
By (\hyperlink{dagger1}{$\dagger_1$}), we have $d_{1i} \leq 2$ for $i = 2,3$.

\hypertarget{Case2-2-a}{\textit{Case 2-a.}} If $d_{12} = 1$ and $d_{13} = 2$, one might write
 \[
  [Z_1] = [v_1 \wedge \cdots \wedge v_{d-2} \wedge v_{d-1} \wedge (v_d + v_{d+1})].
 \]
Then by (\hyperlink{dagger1}{$\dagger_1$}) and (\hyperlink{dagger2}{$\dagger_2$}), $t$ can be expressed as
 \[
  t = v_{d-1}^{\ast} \otimes (a(v_d - v_{d+1}) + bv_{d+2})
    + \frac{1}{2} \left( v_{d}^{\ast} - v_{d+1}^{\ast} \right) \otimes (a'(v_d - v_{d+1}) + b'v_{d+2})
 \]
for some $a,b,a',b' \in \CC$.
However, the equality
 \begin{align*}
  &[v_1 \wedge \cdots \wedge v_{d-2} \wedge (v_{d-1} \wedge v_d + v_{d+1} \wedge v_{d+2})] \\
  &= [v_1 \wedge \cdots \wedge v_{d-2} \wedge ((a(v_d - v_{d+1}) + bv_{d+2}) \wedge (v_d + v_{d+1}) \\
  &\qquad + v_{d-1} \wedge (a'(v_d - v_{d+1}) + b'v_{d+2}))]
 \end{align*}
cannot hold, which is easily verified by taking $\wedge (v_d-v_{d+1}) \wedge v_{d+2}$.
Thus this case is abolished.

\hypertarget{Case2-2-b}{\textit{Case 2-b.}} Assume that $d_{12} = d_{13} = 1$.

When $n = 1$, there exist (possibly equal) $p,q \in \supp(Z_1)$ such that
 \[
  Z_2 \cap Z_3 = Z_1 - p - q, \qquad Z_2 \cup Z_3 = Z_1 + p + q
 \]
by (\hyperlink{dagger1}{$\dagger_1$}) and (\hyperlink{dagger2}{$\dagger_2$}).
Hence $p$ and $q$ are distinct and we may assume that $Z_2 = Z_1 + p - q$ and $Z_3 = Z_1 - p + q$.
Thus we may write
 \[
  [Z_1] = [v_1 \wedge \dots \wedge v_{d-2} \wedge v_{d-1} \wedge v_{d+1}].
 \]
Since $t = t|_p \oplus t|_q$, we have
 \[
  t = av_{d-1}^{\ast} \otimes v_{d} + bv_{d+1}^{\ast} \otimes v_{d+2}
 \]
for some $a,b \in \CC$. But then
 \begin{align*}
  &[v_1 \wedge \dots \wedge v_{d-2} \wedge (v_{d-1} \wedge v_d + v_{d+1} \wedge v_{d+2})] \\
  &\neq [v_1 \wedge \dots \wedge v_{d-2} \wedge (av_d \wedge v_{d+1} + bv_{d-1} \wedge v_{d+2})],
 \end{align*}
which yields a contradiction.

When $d = 2$, the subschemes $Z_2$ and $Z_3$ are disjoint.
Since $Z_1 \subset Z_2 \cup Z_3$ by (\hyperlink{dagger3}{$\dagger_3$}) and they have the same support, $Z_1$ should be reduced.
Let $p = Z_1 \cap Z_2$ and $q = Z_1 \cap Z_3$.
Write $[Z_2] = [v_1 \wedge v_2]$, $[Z_3] = [v_3 \wedge v_4]$ and $[Z_1] = [v_1 \wedge v_3]$ for some linearly independent vectors $v_1, v_2, v_3, v_4 \in H^0(\Lcal)^{\vee}$.
Since $Z_1$ is reduced, one can decompose $t = t|_{p} \oplus t|_{q}$.
Note that $\ker t|_{p} = \Ical_{Z_2}/\Ical_p^2$ and $\ker t|_{q} = \Ical_{Z_3}/\Ical_q^2$.
Hence from (\hyperlink{dagger2}{$\dagger_2$}), we infer that
 \[
  t = av_1^{\ast} \otimes v_2 + bv_3^{\ast} \otimes v_4
 \]
for some $a,b \in \CC$.
This leads to a contradiction because
 \[
  [v_1 \wedge v_2 + v_3 \wedge v_4] \neq [bv_1 \wedge v_4 + av_2 \wedge v_3].
 \]

\hypertarget{Case2-2-c}{\textit{Case 2-c.}} Assume that $d_{12} = d_{13} = 2$.
The case $n = 1$ is impossible because $Z_2 = Z_1 + p - q$ and $Z_3 = Z_1 - p + q$ for some distinct $p,q \in X$.
Also, the case $d = 2$ is impossible because $\supp(Z_i)$ is pairwise disjoint for $1 \leq i \leq 3$ but $Z_1 \subset Z_2 \cup Z_3$ by (\hyperlink{dagger3}{$\dagger_3$}).
\end{proof}

\begin{prop} \label{prop:Tangent-lines-intersect}
    Two distinct tangent lines to $X^{[d]}$ do not intersect out of $X^{[d]}$.
\end{prop}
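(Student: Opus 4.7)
The plan is to mirror the arguments of Propositions~\ref{prop:Secant-lines-intersect} and \ref{prop:Secant-line-and-tangent-line-intersect}. Suppose for contradiction that $\ell_1 = \langle t_1\rangle$ and $\ell_2 = \langle t_2\rangle$ are distinct tangent lines, with $t_i \in T_{X^{[d]}, [Z_i]}$, meeting at a point $P \notin X^{[d]}$. First, $[Z_1] \neq [Z_2]$: otherwise $\ell_1 \cap \ell_2 \supset \{[Z_1], P\}$ would force $\ell_1 = \ell_2$. Since every point of $\ell_i \setminus \{[Z_i]\}$ lies in the orbit $\Theta_{\rank(t_i)}$ and distinct orbits are disjoint, $\rank(t_1) = \rank(t_2) =: r$. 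Theorem~\ref{thm:identifiable-locus-for-Grassmannian} rules out $r \geq 3$, which would make $\Theta_r$ identifiable and force $\ell_1 = \ell_2$; hence $r \in \{1,2\}$.

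For $r=1$, each $\ell_i$ is a Schubert line in $\Gr(d,V)$ of the type described by Proposition~\ref{prop:lines-in-Grassmannian}. Writing $\im(t_i) = \Ical^{(i)}/\Ical_{Z_i}$ and $\ker(t_i) = \Jcal^{(i)}/\Ical_{Z_i}^2$, denote by $Z_0^{(i)}$ (length $d-1$) and $W_0^{(i)}$ (length $d+1$) the subschemes defined by $\Ical^{(i)}$ and $\Jcal^{(i)}$. The points of $\ell_i$ are the codimension-$d$ subspaces of $H^0(\Lcal)$ sandwiched between $H^0(\Lcal \otimes \Ical_{W_0^{(i)}})$ and $H^0(\Lcal \otimes \Ical_{Z_0^{(i)}})$. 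Distinctness gives $Z_0^{(1)} \neq Z_0^{(2)}$ or $W_0^{(1)} \neq W_0^{(2)}$, and in either case the argument of Case 1 of Proposition~\ref{prop:Secant-lines-intersect} --- using Lemmas~\ref{lem:sections-inclusion-implies-inclusion}--\ref{lem:Hamming-distance-and-length} together with the $(d+2)$-very ampleness of $\Lcal$ --- expresses $P$ as $H^0(\Lcal \otimes (\Ical_{Z_0^{(1)}} \cap \Ical_{Z_0^{(2)}}))$ or as $H^0(\Lcal \otimes (\Ical_{W_0^{(1)}} + \Ical_{W_0^{(2)}}))$, placing $P$ in $X^{[d]}$, a contradiction.

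For $r=2$, I fix a basis $v_1,\dots,v_N$ of $H^0(\Lcal)^{\vee}$ so that
\[
    P = [v_1 \wedge \cdots \wedge v_{d-2} \wedge \omega], \qquad \omega = v_{d-1} \wedge v_d + v_{d+1} \wedge v_{d+2};
\]
both the $(d-2)$-plane $\Span(v_1,\dots,v_{d-2})$ and the $4$-plane $V_4 = \Span(v_{d-1},v_d,v_{d+1},v_{d+2})$ are intrinsic to $P$. Applying (\hyperlink{dagger1}{$\dagger_1$}) to $t_1$ and $t_2$ forces $[Z_i] = [v_1 \wedge \cdots \wedge v_{d-2} \wedge u_1^{(i)} \wedge u_2^{(i)}]$ with $u_1^{(i)}, u_2^{(i)} \in V_4$ linearly independent, so $d_{12} := \mathrm{dist}([Z_1],[Z_2]) \in \{1,2\}$, and the condition $P \in \ell_i$ further imposes $\omega \wedge u_1^{(i)} \wedge u_2^{(i)} = 0$ in $\wedge^4 V_4 \simeq \CC$. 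In the subcase $d_{12}=1$, I would bring $[Z_2]$ into a normal form by a basis change inside $V_4$, parametrize $t_2$ via (\hyperlink{dagger1}{$\dagger_1$})--(\hyperlink{dagger2}{$\dagger_2$}), and show that the wedge identity expressing $P \in \langle t_2\rangle$ has no solution, in parallel with Case 2-a of Proposition~\ref{prop:Secant-line-and-tangent-line-intersect}. In the subcase $d_{12}=2$, (\hyperlink{dagger2}{$\dagger_2$}) yields a single colength-$(d+2)$ ideal $\Jcal := \Jcal^{(1)} = \Jcal^{(2)}$, since $\ker(\widetilde{t_1}) = V_4^{\perp} = \ker(\widetilde{t_2})$ and $(d+2)$-very ampleness together with Lemma~\ref{lem:sections-inclusion-implies-inclusion} identify an ideal by its global sections; then (\hyperlink{dagger3}{$\dagger_3$}) gives $\supp(Z_1) = \supp(\Ocal_X / \Jcal) = \supp(Z_2)$. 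The hypothesis $n=1$ or $d=2$ then produces a contradiction: when $d=2$, Hamming distance $2$ forces $\supp(Z_1) \cap \supp(Z_2) = \varnothing$, incompatible with the nonempty equality of supports; when $n=1$, the sandwich $\Ical_{Z_1}^2 + \Ical_{Z_2}^2 \subset \Jcal = \Ical_{Z_1 \cup Z_2}$ combined with equality of supports, the colength count, and the rank-$2$ structure of $t_1, t_2$ forces $Z_1 = Z_2$ as divisors on the curve.

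The main obstacle is the rank-$2$ case. While (\hyperlink{dagger1}{$\dagger_1$}) quickly confines $[Z_1]$ and $[Z_2]$ to the $4$-plane determined by $P$, the subcase $d_{12}=1$ requires the correct parametrization of $t_2$ via (\hyperlink{dagger1}{$\dagger_1$})--(\hyperlink{dagger2}{$\dagger_2$}) and a careful wedge computation in $\wedge^2 V_4$, and the curve case of $d_{12}=2$ requires combining (\hyperlink{dagger2}{$\dagger_2$}) and (\hyperlink{dagger3}{$\dagger_3$}) with the specific rank-$2$ kernel structure to conclude $Z_1 = Z_2$.
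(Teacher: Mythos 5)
Your overall strategy is the same as the paper's: reduce to $\rank(t_1)=\rank(t_2)\in\{1,2\}$ via the orbit stratification, dispose of rank $1$ by the Schubert-line argument of Proposition~\ref{prop:Secant-lines-intersect}, and split rank $2$ according to $\mathrm{dist}([Z_1],[Z_2])\in\{1,2\}$. The rank-$1$ case, and the rank-$2$ case with $\mathrm{dist}([Z_1],[Z_2])=2$ (common kernel ideal $\Jcal=\Ical_{Z_1}\cap\Ical_{Z_2}$ via (\hyperlink{dagger2}{$\dagger_2$}) and Lemma~\ref{lem:sections-inclusion-implies-inclusion}, then equality of supports via (\hyperlink{dagger3}{$\dagger_3$})), match the paper and are fine in outline.

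The genuine gap is the subcase you yourself flag as the main obstacle: $\rank=2$, $\mathrm{dist}([Z_1],[Z_2])=1$, in the surface case $d=2$ (two nonreduced length-$2$ schemes at the same point $p\in X$). Your plan --- normalize $[Z_2]$ inside $V_4$, ``parametrize $t_2$ via (\hyperlink{dagger1}{$\dagger_1$})--(\hyperlink{dagger2}{$\dagger_2$})'', and derive a contradiction from the wedge identity --- cannot succeed as stated, because for $d=r=2$ the observation (\hyperlink{dagger1}{$\dagger_1$}) is vacuous and (\hyperlink{dagger2}{$\dagger_2$}) only fixes $\ker(\widetilde{t_i})$ as a subspace. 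Within those constraints a rank-$2$ tangent vector \emph{to the Grassmannian} at $[Z_2]=[v_1\wedge v_3]$ whose tangent line passes through the given intersection point $P=[v_1\wedge(v_4+av_3)+v_3\wedge v_2]$ certainly exists, so the wedge identity alone produces no contradiction. The missing input is that $t_i$ must be an $\Ocal_{Z_i}$-module homomorphism $\Ical_{Z_i}/\Ical_{Z_i}^2\to\Ocal_{Z_i}$. The paper extracts this by a local computation: it shows the common kernel ideal satisfies $\Jcal\not\supset\mfr^2$ (comparing the module $\Ical_{Z_1}/\Jcal\simeq\Ocal_{Z_1}$ with $\mfr$-torsion quotients), deduces $\Jcal=(x^2,y^2,z_1,\dots,z_{n-2})+\mfr^3$, and only then obtains the rigid normal forms $t_1=(v_1^{\ast}+av_2^{\ast})\otimes v_3+v_2^{\ast}\otimes v_4$ and $t_2=(v_1^{\ast}+bv_3^{\ast})\otimes v_2+v_3^{\ast}\otimes v_4$, whose tangent lines meet $P$ with opposite signs on the $v_2\wedge v_3$ component. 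You would need to supply this local analysis (and, separately, the short divisor-arithmetic argument for $n=1$ in this subcase, which your sketch does not split off) to close the proof.
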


\begin{proof}

Suppose that we are given two distinct points $[Z_1], [Z_2] \in X^{[d]}$ and tangent vectors $t_i \in T_{X^{[d]}, [Z_i]}$ such that the tangent lines $\langle t_1 \rangle$ and $\langle t_2 \rangle$ meet outside $X^{[d]}$.
We have $\rank(t_1) = \rank(t_2) \in \{ 1,2 \}$ and $\supp(Z_1) = \supp(Z_2)$ by (\hyperlink{dagger2}{$\dagger_2$}) and (\hyperlink{dagger3}{$\dagger_3$}).

\hypertarget{Case3-1}{\textit{Case 1.}}
For the case $\rank(t_1) = \rank(t_2) = 1$, arguing as in Proposition~\ref{prop:Secant-lines-intersect} and Proposition~\ref{prop:Secant-line-and-tangent-line-intersect} would lead to a contradiction.

\hypertarget{Case3-2}{\textit{Case 2.}} Suppose that $\rank(t_1) = \rank(t_2) = 2$.
Then we have $\mathrm{dist}([Z_1], [Z_2]) \leq 2$ by (\hyperlink{dagger1}{$\dagger_1$}).

\hypertarget{Case3-2-a}{\textit{Case 2-a.}} Assume that $\mathrm{dist}([Z_1],[Z_2]) = 1$.

When $n = 1$, by (\hyperlink{dagger1}{$\dagger_1$}) and (\hyperlink{dagger2}{$\dagger_2$}), there exist (possibly equal) $p, q, p', q' \in X$ such that
 \[
  Z_1 - p - q = Z_2 - p' - q', \qquad Z_1 + p + q = Z_2 + p' + q'.
 \]
Thus $p+q = p'+q'$ as divisors on $X$, which in turn implies that $Z_1 = Z_2$, a contradiction.

When $d = 2$, observe that $Z_1$ and $Z_2$ are supported in the same point, say $p \in X$.
Let $x,y,z_1, \dots, z_{n-2}$ be local coordinates at $p$ such that
 \[
  \Ical_{Z_1} = (y,z_1, \dots, z_{n-2}) + \mfr^2, \qquad
  \Ical_{Z_2} = (x,z_1, \dots, z_{n-2}) + \mfr^2
 \]
where $\mfr$ is the maximal ideal at $p$.
Say $\ker(t_1) = \Jcal/\Ical_{Z_1}^2$ for some $\Ical_{Z_1}^2 \subset \Jcal \subset \Ical_{Z_1}$.
Then we have $\Ical_{Z_2}^2 \subset \Jcal \subset \Ical_{Z_2}$ and $\ker(t_2) = \Jcal/\Ical_{Z_2}^2$ by (\hyperlink{dagger2}{$\dagger_2$}).
Note that $\Ical_{Z_1}/\Jcal \simeq \Ocal_{Z_1}$ as $\Ocal_{Z_1}$-modules.
Thus $\Jcal \not\supset \mfr^2$ must hold; otherwise, the former is reduced while the latter is not.
It follows that
 \[
  \Jcal = \Ical_{Z_1}^2 + \Ical_{Z_2}^2 + (z_1, \dots, z_{n-2}) = (x^2, y^2, z_1, \dots, z_{n-2}) + \mfr^3.
 \]
Hence one can write
 \[
  [Z_1] = [v_1 \wedge v_2], \qquad
  [Z_2] = [v_1 \wedge v_3]
 \]
and
 \[
  t_1 = (v_1^{\ast} + av_2^{\ast}) \otimes v_3 + v_2^{\ast} \otimes v_4, \qquad t_2 = (v_1^{\ast} + bv_3^{\ast}) \otimes v_2 + v_3^{\ast} \otimes v_4
 \]
for some $a,b \in \CC$ and $v_i \in H^0(\Lcal)^{\vee}$ such that the $v_i$ are linearly independent and
 \[
  [-v_2 \wedge v_3 + v_1 \wedge (av_3 + v_4)] =
  [v_2 \wedge v_3 + v_1 \wedge (bv_2 + v_4)] = \langle t_1 \rangle \cap \langle t_2 \rangle.
 \]
This equality is impossible.

\hypertarget{Case3-2-b}{\textit{Case 2-b.}} Assume that $\mathrm{dist}([Z_1], [Z_2]) = 2$.
Let $\ker(t_1) = \Jcal/\Ical_{Z_1}^2$ for some $\Ical_{Z_1}^2 \subset \Jcal \subset \Ical_{Z_1}$.
By (\hyperlink{dagger2}{$\dagger_2$}) and Lemma~\ref{lem:sections-inclusion-implies-inclusion}, $\Ical_{Z_2}^2 \subset \Jcal \subset \Ical_{Z_2}$ and $\ker(t_2) = \Jcal/\Ical_{Z_2}^2$.
In particular, we have $\Jcal = \Ical_{Z_1} \cap \Ical_{Z_2}$.
When $n = 1$, the same argument as in Case~\hyperlink{Case3-2-a}{2-a} leads to a contradiction.
When $d = 2$, we are done because $\supp(Z_1)$ and $\supp(Z_2)$ cannot be disjoint.
\end{proof}

In conclusion, one can summarize the identifiability results as follows:
\begin{enumerate}
    \item If $n = 1$ or $d = 2$, the points in $\mathrm{Sec}(X^{[d]}) \setminus X^{[d]}$ are identifiable, whence Theorem~\ref{thm:identifiable-locus-for-Hilbert-scheme-of-points}.
    % \item General points of $\mathrm{Sec}(X^{[d]})$ are identifiable when $n \leq 2$ or $d \leq 3$.
    \item The same proof shows that the points in
     \[
      (\mathrm{Sec}(X^{[d]}) \setminus X^{[d]}) \cap \Gr(d,H^0(\Lcal)^{\vee})
     \]
    are identifiable when $n \leq 2$ or $d \leq 3$.
\end{enumerate}

%---------------------------------------------------------------------

\section{Syzygies of the Hilbert scheme of two points} \label{sec:syzygies}

Under some mild conditions on $X$, we have a resolution of singularities of its secant variety as follows.

\begin{thm}[{\cite[Theorem~3.9]{Vermeire2001}}] \label{thm:Singularity-of-secant-variety}
    Let $X \subset \PP(V)$ be a nondegenerate smooth projective variety.
    Suppose that $X$ satisfies $(K_2)$, i.e., $X$ is cut out by quadrics in $\PP(V)$ and the syzygies among them are generated by linear ones.
    If $X$ contains no lines or conics, then the blowup $\mathrm{Bl}_X \mathrm{Sec}(X)$ is smooth.
    In particular, $\mathrm{Sec}(X)$ is smooth off $X$.
\end{thm}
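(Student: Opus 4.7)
The plan is to exhibit $u$ as a proper birational morphism that is an isomorphism outside an effective Cartier divisor lying over $X$, and then to identify this morphism with $\mathrm{Bl}_X\mathrm{Sec}(X) \to \mathrm{Sec}(X)$ via its universal property together with Zariski's main theorem.

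I would first pin down $u^{-1}(X)$ set-theoretically. Because $X$ contains no lines, any secant or tangent line of $X$ meeting $X$ in a subscheme of length $\geq 3$ would have to be contained in $X$, which is impossible. Hence a point $(Z, p) \in \PP(\Ecal_{\Lcal})$ lying over $X$ must have $p \in \supp(Z)$, so $u^{-1}(X)$ coincides with the image of the tautological inclusion $\Xi_2 \hookrightarrow \PP(\Ecal_{\Lcal})$. Since $\Xi_2$ is a relative degree-$2$ Cartier divisor in the $\PP^1$-bundle $\PP(\Ecal_{\Lcal}) \to X^{[2]}$, the universal property of blowing up yields a canonical factorization
\[
    u = \sigma \circ v, \qquad v \colon \PP(\Ecal_{\Lcal}) \to \mathrm{Bl}_X \mathrm{Sec}(X).
\]

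The crux of the argument --- and what I expect to be the main obstacle --- is showing that $u$ is injective on closed points off $u^{-1}(X)$. Suppose, for contradiction, that two distinct secant or tangent lines $\ell_1, \ell_2$ of $X$ meet at a point $p \notin X$, and let $\Pi = \langle \ell_1, \ell_2 \rangle$ be the plane they span. Then $X \cap \Pi$ contains a length-$4$ zero-dimensional subscheme supported on $\ell_1 \cup \ell_2$. Using $(K_2)$ --- the fact that $X$ is cut out scheme-theoretically by quadrics whose first syzygies are linear --- one argues that $X \cap \Pi$ is cut out in $\Pi$ by the restrictions of these quadrics, and that the linear-syzygy condition forces the restricted quadric system to share a common component through the length-$4$ subscheme. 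This common component is then a line or a (possibly reducible) conic contained in $X$, contradicting both the no-line and no-conic hypotheses.

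With $u$ bijective off $u^{-1}(X)$, the morphism $v$ is birational, proper, and bijective on closed points. To upgrade this to an isomorphism, I would check that the differential $du_{(Z,p)}$ is injective for every $p \notin X$ by an infinitesimal version of the plane argument: a nonzero kernel vector would produce, to first order, a second secant-line direction through $p$, again yielding a forbidden planar configuration and contradicting the previous step. Combined with the smoothness of $\PP(\Ecal_{\Lcal})$ and the normality of $\mathrm{Bl}_X \mathrm{Sec}(X)$ (inherited from $\mathrm{Sec}(X)$, whose normality in this positive setting is standard), Zariski's main theorem forces $v$ to be an isomorphism. The final claim that $\mathrm{Sec}(X)$ is smooth off $X$ then follows because $v$ restricts to an isomorphism between the smooth variety $\PP(\Ecal_{\Lcal}) \setminus u^{-1}(X)$ and $\mathrm{Sec}(X) \setminus X$.
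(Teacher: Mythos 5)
This theorem is quoted from Vermeire and not reproved in the paper, but your route diverges from his in a way that opens genuine gaps. The most serious is the appeal to Zariski's main theorem: you need $\mathrm{Bl}_X\Sec(X)$ to be normal, and you dismiss this as inherited from the "standard" normality of $\Sec(X)$. Normality of $\Sec(X)$ under positivity hypotheses is precisely Ullery's theorem \cite{Ullery2016}, proved fifteen years after Vermeire and \emph{using} the description of $\PP(\Ecal_{\Ocal_X(1)})\rightarrow\Sec(X)$ that you are trying to establish, so this step is circular; and even granting normality of $\Sec(X)$, its blowup along $X$ need not be normal. Vermeire avoids all of this by running the argument in the opposite direction: he blows up $\PP^N$ along $X$, uses the quadrics to define a morphism $\widetilde{\varphi}:\mathrm{Bl}_X\PP^N\rightarrow\PP^s$, shows via $(K_2)$ and the absence of lines and conics that every fiber of $\widetilde{\varphi}$ restricted to the proper transform of $\Sec(X)$ is a single line (the mechanism reproduced in Lemma~\ref{lem:fiber-of-blowup-of-secant}), and thereby exhibits $\mathrm{Bl}_X\Sec(X)$ directly as a $\PP^1$-bundle identified with $\PP(\Ecal_{\Ocal_X(1)})$. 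No normality input is needed, and the behavior over $X$ --- which your proposal never examines, since you only control fibers and differentials of $u$ away from $u^{-1}(X)$ --- comes for free.

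The second gap is in the step you yourself flag as the crux. The four points of $\Pi\cap X$ cut out by two secant lines meeting at $p\notin X$ impose independent conditions on conics in $\Pi$, and the resulting pencil of conics has no fixed component (its general member is smooth), so the claim that $(K_2)$ "forces the restricted quadric system to share a common component" does not follow from the quadrics containing that length-$4$ scheme. What $(K_2)$ actually buys --- Vermeire's Proposition~2.8, the one lemma carrying the weight of the hypothesis --- is that the fibers of the map defined by the quadrics are \emph{linear spaces}; since $\ell_1$ and $\ell_2$ are each contracted and meet at $p$, the whole plane $\Pi$ lies in one fiber, so the restrictions $Q_i|_\Pi$ are all proportional and $X\cap\Pi$ is a single conic. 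Your sketch reaches the right contradiction but replaces this lemma with an assertion that, read literally, is false. A smaller point: to invoke the universal property of blowing up you need the inverse image ideal sheaf $u^{-1}\Ical_X\cdot\Ocal_{\PP(\Ecal_{\Ocal_X(1)})}$ to be invertible, not merely that $u^{-1}(X)$ agrees set-theoretically with the Cartier divisor $\Xi_2$; that identification of ideals requires its own (if routine) verification.
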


In fact, $\mathrm{Bl}_X \mathrm{Sec}(X)$ is isomorphic to the secant bundle, see Section~\ref{sec:application}.
The imposing conditions on syzygies in Theorem~\ref{thm:Singularity-of-secant-variety} are important.
Indeed, there is an example of a complete intersection variety $X$ that does not have linear syzygies and the blowup $\mathrm{Bl}_X \mathrm{Sec}(X)$ does not separate tangent lines to $X$.

\begin{exa} \label{exa:Imposing-conditions-on-syzygies-needed}
    Let $n \geq 5$ and let $z_0, \dots, z_n$ be homogeneous coordinates on $\PP^n$.
    Consider the quadrics of the form
     \[
      Q_f = z_0^2 + z_1^2 + f,
     \]
    where $f \in H^0(\PP^n, \Ocal(2))$ is a quadric without $z_0^2$, $z_1^2$, $z_0z_2$, $z_1z_2$ and $z_2^2$ terms.
    For simplicity, denote by $V \subset H^0(\PP^n, \Ocal(2))$ the linear system of such quadrics $f$.
    Note that a family of (at least three) quadrics $Q_f$ for general $f \in V$ does not satisfy $(K_2)$ by \cite[Lemma~2.4]{Vermeire2001} because its restriction to $\PP_{z_0,z_1}^1$ is linearly dependent.
    
    Fix $3 \leq c \leq n-2$.
    Suppose that $Q_{f_1}, \dots, Q_{f_c}$ define a smooth complete intersection $X$.
    Consider $p = [0:0:1:0:\dots:0] \in X$ and lines
     \[
      \ell_0 = (z_1 = z_3 = \dots = z_n = 0), \qquad \ell_1 = (z_0 = z_3 = \dots = z_n = 0)
     \]
    tangent to $X$ at $p$.
    In the local coordinates $D_{+}(z_2) \times \PP^{c-1} \simeq \AA_{z}^n \times \PP_{w}^{c-1}$ of the blowup $\mathrm{Bl}_{X}(\PP^n)$, the points on the proper transform $\widetilde{\ell_0}$ of $\ell_0$ are
     \[
      ((t,0,\dots,0), [Q_1: \dots : Q_c]) = ((t,0,\dots,0), [1: \dots :1])
     \]
    for $t \in \CC$; and similarly, the points on the proper transform $\widetilde{\ell_1}$ of $\ell_1$ are
     \[
      ((0,t,0,\dots,0), [Q_1: \dots : Q_c]) = ((0,t,0,\dots,0), [1: \dots :1]).
     \]
    Hence the blowup does not separate those two lines.

    It remains to find such a family of quadrics.
    We will mimic the proof of Bertini's theorem \cite[Theorem~II.8.18]{Hartshorne1977}.
    For a quadric $Q$, let $H_Q$ denote the hypersurface defined by $Q$.
    Let $X$ be the variety defined by $Q_{f_1}, \dots, Q_{f_c}$ for general choices of $f_1, \dots, f_c \in V$.
    Observe that $X$ intersects the linear subspace $\PP_{z_0,z_1,z_2}^{2}$ only at a point $p = [0:0:1:0:\dots:0]$.
    Since the tangent space of $H_{f}$ at $p$ is cut out in $T_{\PP^n,p}$ by
     \[
      a_{23}dz_{3} + \dots + a_{2n}dz_n
     \]
    where $a_{2k}$ is the coefficient of $z_2z_k$ in $f$ for $3 \leq k \leq n$, the variety $X$ is smooth at $p$ provided $c \leq n-2$.
    
    Now fix a smooth subvariety $X \subset \PP^n$ of dimension at least $3$ and a point $p=[p_0: \dots : p_n] \in X \setminus \PP_{z_0,z_1,z_2}^2$.
    Consider the set
     \[
      \Sigma = \left\{ (f, q) \in V \times X \setminus \PP_{z_0,z_1,z_2}^2: q \in H_{Q_{f}} \text{ and } X \cap H_{Q_{f}} \text{ is singular at } q \right\}.
     \]
    Let $\pr_2:\Sigma \rightarrow X$ be the projection map.
    Then the set
     \[
      V_p = \left\{ f \in V: p \in H_{Q_f} \right\}
     \]
    is an affine subspace of codimension $1$ in $V$.
    Define a linear map
     \[
      \varphi_p:H^0(\PP^n, \Ocal(2)) \rightarrow \Ocal_{X,p}/\mfr_{p}^2
     \]
    as follows: fix $i$ with $p_i \neq 0$, and set $\varphi_p(Q) = \frac{Q}{z_i^2}\big|_{p} +\mfr_{p}^2$ for $Q \in H^0(\PP^n, \Ocal(2))$.
    
    As $p \not\in \PP_{z_0,z_1,z_2}^2$, we may assume that $p_3 = 1$.
    For any $0 \leq i \leq j \leq n$, one can choose a quadric
     \[
      (z_i - p_iz_3)(z_j - p_jz_3) \in \ker \varphi_p,
     \]
    which implies that $\varphi_p(z_iz_j) \in \varphi_p(V)$.
    The map $\varphi_p$ is surjective as $\Ocal(2)$ is very ample, so the restriction $\varphi_p|_V$ is also surjective.
    
    Then we have
     \[
      \Sigma_p = \pr_2^{-1}(p)
        = V_p \cap \varphi_p^{-1}(\varphi_p(-z_0^2-z_1^2))
        = V \cap \varphi_p^{-1}(\varphi_p(-z_0^2-z_1^2))
     \]
    because
     \[
      (z_0^2 + z_1^2 + V) \cap \ker\varphi_p = (z_0^2 + z_1^2 + V_p) \cap \ker\varphi_p.
     \]
    Hence $\Sigma_p$ has codimension $\dim X + 1$ in $V$, and $\Sigma$ is an irreducible variety of dimension $\dim V - 1$.
    Therefore, $\Sigma$ does not dominate $V$; i.e., the quadrics $Q_{f_1}, \dots, Q_{f_c}$ intersect transversally for general choices of $f_1, \dots, f_c \in V$.
\end{exa}

Before discussing syzygies, we first prove the linear normality of the embedding of $X^{[2]}$.
Let $\Lcal$ be a $2$-very ample line bundle on a smooth projective variety $X$ of dimension $n$.
Let $\Lcal' = \Lcal^{[2]} - \frac{B}{2}$ be the line bundle defining $\varphi_{1, \Lcal}$, where $B \subset X^{[2]}$ is the divisor parametrizing nonreduced subschemes (cf. an appendix of \cite{BS1991}).
Let $\pi:\mathrm{Bl}_{\Delta} (X \times X) \rightarrow X^{[2]}$ be the quotient map under the natural action of $\Sfr_2$.
Let $\mu:\mathrm{Bl}_{\Delta} (X \times X) \rightarrow X \times X$ be the blowup morphism and $E$ be the exceptional divisor.
Recall that $B \simeq E$ as $\pi$ is a double cover with ramification divisor $E$ and branch divisor $B$.
Denote by $f:B \simeq \PP(\Omega_X) \rightarrow X$ the projection morphism.
We have a decomposition into eigenspaces of the involution
 \[
  \pi_{\ast} \mu^{\ast} \Lcal^{\boxtimes 2} = \Lcal^{[2]} \oplus \left( \Lcal^{[2]} - \frac{B}{2} \right) = \Lcal^{[2]} \oplus \Lcal'.
 \]
Since
 \[
  H^0(X^{[2]}, \Lcal') \subset H^0(X^{[2]}, \pi_{\ast} \mu^{\ast} \Lcal^{\boxtimes 2}) \simeq H^0(X,\Lcal)^{\otimes 2}
 \]
is the $(-1)$-eigenspace, we have
 \[
  H^0(X^{[2]}, \Lcal') \simeq \bigwedge^2 H^0(X, \Lcal).
 \]
Since $\varphi_{1, \Lcal}$ is nondegenerate, it is linearly normal, i.e., given by the complete linear series $|\Lcal'|$.

Now we study defining equations and syzygies of the Hilbert scheme of two points $X^{[2]}$.
Define the adjoint line bundle
 \[
  \Lcal = K_X + dA + C
 \]
for an integer $d \gg 0$ and $A,C \in \Pic X$, where $A$ is very ample and $C$ is nef.
One can write $\Lcal'$ as
 \[
  \Lcal' = \left( K_{X^{[2]}} - \frac{n-2}{2}B \right) + \left( dA^{[2]} - \frac{B}{2} \right) + C^{[2]}
 \]
because
 \[
  \mu^{\ast} K_{X \times X} = K_{\mathrm{Bl}_{\Delta}(X \times X)} - (n-1)E = \pi^{\ast} K_{X^{[2]}} - (n-2)E.
 \]

\begin{thm} \label{thm:Syzygies-of-Hilbert-scheme-of-two-points}
    For an integer $p \geq 0$, the line bundle $\Lcal'$ satisfies $(N_p)$ provided that $d \gg 0$ and $A$ is sufficiently positive.
\end{thm}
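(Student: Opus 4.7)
The plan is to verify the Ein--Lazarsfeld cohomological criterion (Lemma~\ref{lem:cohomological-criterion-for-np}): it suffices to establish
\[
    H^1\left(X^{[2]}, \bigwedge^{q} M_{\Lcal'} \otimes \Lcal'^{k}\right) = 0
\]
for all $1 \leq q \leq p+1$ and $k \geq 1$. The main device is the short exact sequence $0 \to \Ocal_{X^{[2]}}(-B) \to \Ocal_{X^{[2]}} \to \Ocal_{B} \to 0$, which after tensoring with the locally free sheaf $\bigwedge^{q} M_{\Lcal'} \otimes \Lcal'^{k}$ splits the problem into an ambient vanishing of $H^1(X^{[2]}, \bigwedge^{q} M_{\Lcal'} \otimes \Lcal'^{k}(-B))$ and a divisor vanishing of $H^1(B, \bigwedge^{q} M_{\Lcal'}|_B \otimes \Lcal'^{k}|_B)$ on $B \simeq \PP(\Omega_X)$, following the divisor-restriction philosophy of \cite[Observation~1.3]{GP2000}.

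For the divisor vanishing, I first identify $\Lcal'|_B$ using the double cover $\pi:\mathrm{Bl}_{\Delta}(X \times X) \to X^{[2]}$ together with the relation $\pi^{\ast}\Lcal' = \mu^{\ast}\Lcal^{\boxtimes 2} - E$; restricting to $E \simeq B$ expresses $\Lcal'|_B$ as $f^{\ast}\Lcal^{2}$ twisted by the tautological line bundle of $\PP(\Omega_X) \to X$. Assuming the auxiliary vanishing $H^1(X^{[2]}, \Lcal'(-B)) = 0$, restricting the defining sequence of $M_{\Lcal'}$ to $B$ and applying the snake lemma gives
\[
    0 \to H^0(X^{[2]}, \Lcal'(-B)) \otimes \Ocal_B \to M_{\Lcal'}|_B \to M_{\Lcal'|_B} \to 0,
\]
whose induced Koszul filtration on $\bigwedge^{q} M_{\Lcal'}|_B$ has graded pieces built from wedge powers of $M_{\Lcal'|_B}$ twisted by trivial bundles. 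The required vanishing then reduces to $(N_p)$-type vanishings for $\Lcal'|_B$ on the projective bundle $\PP(\Omega_X)$, which are delivered by Park's theorem \cite{Park2007} once $\Lcal^{2}$ is taken sufficiently positive to compensate for the lack of positivity of $\Omega_X$.

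For the ambient vanishing, I invoke Lemma~\ref{lem:non-exact-resolution} with respect to a very ample line bundle on $X^{[2]}$ derived from $A^{[2]}$, presenting $M_{\Lcal'}$ as a non-exact complex whose terms are negative twists of that bundle and whose homology sheaves involve the conormal bundle of $X^{[2]}$ in the ambient projective space. Taking wedge powers and chasing the associated hypercohomology spectral sequence, $H^1(X^{[2]}, \bigwedge^{q} M_{\Lcal'} \otimes \Lcal'^{k}(-B))$ is controlled by a cascade of groups of the form $H^i(X^{[2]}, (A^{[2]})^{-j} \otimes \Lcal'^{k}(-B))$ together with corrections coming from wedge powers of the conormal bundle. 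Since $\Lcal' = \Lcal^{[2]} - \tfrac{B}{2}$ originates from the adjoint bundle $\Lcal = K_X + dA + C$ on $X$, these cohomology groups vanish by Kodaira-type theorems once $d$ and $A$ are taken sufficiently large.

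The principal obstacle is the divisor step: since $\Omega_X$ is generally not positive, Park's theorem must be applied in a form where the base twist $f^{\ast}\Lcal^{2}$ compensates for the low fiber degree of $\Lcal'|_B$, and the Koszul filtration on $\bigwedge^{q} M_{\Lcal'}|_B$ must be propagated across the graded pieces without degrading positivity at intermediate steps. A secondary concern is that the bounds required for Park's theorem on $B$ must be reconciled with those required for the Kodaira-type vanishings on $X^{[2]}$, so that a single choice of $(d, A)$ simultaneously ensures all the vanishings dictated by the Ein--Lazarsfeld criterion.
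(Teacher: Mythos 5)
Your overall strategy matches the paper's: the Ein--Lazarsfeld criterion, a restriction sequence along $B$ splitting the problem into an ambient vanishing on $X^{[2]}$ and a divisor vanishing on $B \simeq \PP(\Omega_X)$ handled by Park's theorem, and Lemma~\ref{lem:non-exact-resolution} for the ambient part. The divisor half of your argument (the sequence $0 \to H^0(\Lcal'(-B)) \otimes \Ocal_B \to M_{\Lcal'}|_B \to M_{\Lcal'|_B} \to 0$, the induced filtration on wedge powers, and Park's theorem with $f^{\ast}\Lcal^{2k}$ compensating the non-positivity of $\Omega_X$) is essentially what the paper does.

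The gap is in the ambient step. You peel off a \emph{single} copy of $B$ and claim that $H^1(X^{[2]}, \bigwedge^q M_{\Lcal'} \otimes \Lcal'^{k}(-B))$ then vanishes via Lemma~\ref{lem:non-exact-resolution} and ``Kodaira-type theorems.'' This does not go through. After the hypercohomology spectral sequence, the groups to be killed are higher cohomology of twists of $\bigwedge^{a_1} N^{\vee} \otimes \cdots \otimes \bigwedge^{a_q} N^{\vee}$, and the only available tool is the Le Potier--Sommese trick: rewrite these in terms of the nef bundles $\bigwedge^{e-a_j}(N \otimes A'^{\vee})$ with $A' = A^{[2]} - \frac{B}{2}$, at the cost of subtracting roughly $q(2n+1) + \sum a_i$ copies of $A'$ from the line-bundle factor. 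Each subtracted $A'$ contributes $+\frac{B}{2}$ back, and since $A^{[2]}$ and $\Lcal^{[2]}$ are pulled back from $X^{(2)}$ (hence carry no positivity transverse to $B$), the only source of such positivity is $-B$ itself. With a single $-B$ the residual line bundle ends up containing a nonnegative multiple of the non-nef divisor $B$ already for $k$ small and $q \geq 1$, and no Kodaira/Kawamata--Viehweg statement applies. The paper's fix is structural, not cosmetic: one twists by $-r_0B$ with $r_0 \gg 0$ depending on $p$ and $n$, which forces iterating the restriction sequence $r_0$ times and therefore proving the divisor vanishing for \emph{all} twists $f^{\ast}\Lcal^{2k} \otimes \xi^{2r+k}$ with $0 \leq r < r_0$, not just $r=0$; this is precisely where the hypothesis that $A$ be positive enough to make $\Lcal \otimes \xi^{2r}$ nef enters. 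A secondary omission: before invoking Lemma~\ref{lem:non-exact-resolution} you must verify that $\Lcal'$ is $0$-regular with respect to $A'$, which in the paper is a separate induction requiring the vanishing of $H^{\bullet}(X, -(2(d-i-1)A+2C) \otimes S^{i-n}\Omega_X)$ and consumes another portion of the positivity of $A$.
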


\begin{proof}

We mimic the proof of \cite[Theorem~1]{EL1993}.
By the cohomological criterion \ref{lem:cohomological-criterion-for-np} for $(N_p)$, we need
 \[
  H^1 \left( X^{[2]}, \bigwedge^q M_{\Lcal'} \otimes \Lcal'^k \right) = 0
 \]
for all $1 \leq q \leq p+1$ and $k \geq 1$.
From the restriction sequence
 \[
  \begin{tikzcd}
      0 \arrow{r} & \Ocal_{X^{[2]}}(-B) \arrow{r} & \Ocal_{X^{[2]}} \arrow{r} & \Ocal_B \arrow{r} & 0,
  \end{tikzcd}
 \]
it suffices to prove that
 \begin{equation} \label{eqn:vanishing-on-X2}
  H^1(X^{[2]},M_{\Lcal'}^{\otimes q} \otimes \Lcal'^{k}(-r_0B)) = 0
 \end{equation}
for some fixed $r_0 \gg 0$, and
 \begin{equation} \label{eqn:vanishing-on-B}
 \begin{split}
  &H^1 \left( B, \bigwedge^q M_{\Lcal'} \otimes \Lcal'^{k}(-rB)|_B \right) \\
    &= H^1 \left( B, \bigwedge^q M_{\Lcal'}|_B \otimes f^{\ast}\Lcal^{2k} \otimes \xi^{2r+k} \right)
    = 0
 \end{split}
 \end{equation}
for $0 \leq r < r_0$, where $\xi = \Ocal_B(1)$ is the tautological line bundle.

For the vanishing (\ref{eqn:vanishing-on-X2}), we show that the line bundle $\Lcal'$ is $0$-regular with respect to $A' = A^{[2]} - \frac{B}{2}$ to apply Lemma~\ref{lem:non-exact-resolution}.
Note that
 \[
  H^{k}(X^{[2]}, \Lcal' - iA') \subset H^{k}(\mathrm{Bl}_{\Delta}(X \times X), \mu^{\ast} (\Lcal^{\boxtimes 2} - iA^{\boxtimes 2}) + (i-1)E).
 \]
We will use induction on $i$ to prove the vanishing of the right-hand side for $k \geq i \geq 1$.
The case $i = 1$ is direct due to the Kodaira vanishing theorem.
For the induction step, observe that
 \[
  \mu^{\ast} (\Lcal^{\boxtimes 2} - iA^{\boxtimes 2}) + (i-1)E \simeq K_{\mathrm{Bl}_{\Delta}(X \times X)} + \mu^{\ast}((d-i)A^{\boxtimes 2} + C^{\boxtimes 2}) + (i-n)E.
 \]
From the exact sequence
 \[
  \begin{tikzcd}
      0 \arrow{r} & \Ocal((i-1)E) \arrow{r} & \Ocal(iE) \arrow{r} & \Ocal_E(-i) \arrow{r} & 0,
  \end{tikzcd}
 \]
it suffices to show that
 \begin{align*}
  &h^{k}(E, K_{\mathrm{Bl}_{\Delta}(X \times X)}|_E + \mu^{\ast}((d-i-1)A^{\boxtimes 2} + C^{\boxtimes 2})|_E + \Ocal_E(n-i-1)) \\
  &= h^{k}(E, K_{E} + f^{\ast}(2(d-i-1)A + 2C) + \Ocal_E(n-i)) = 0
 \end{align*}
for $k \geq i+1$ by the adjunction formula.
By the Serre duality and the Leray spectral sequence, this in turn becomes
 \begin{equation} \label{eqn:positivity-of-A-required}
  \begin{split}
  &h^{2n-1-k}(E, -f^{\ast}(2(d-i-1)A + 2C) + \Ocal_E(i-n)) \\
    &= \begin{cases}
        0, & \text{if } 1 \leq i \leq n-1, \\
        h^{2n-1-k}(X, -(2(d-i-1)A + 2C) \otimes S^{i-n}\Omega_X), & \text{if } n \leq i < 2n.
    \end{cases}
  \end{split}
 \end{equation}
The vanishing follows if $A$ is sufficiently positive.

By Lemma~\ref{lem:non-exact-resolution}, there exists a (non-exact) complex
 \begin{equation} \label{eqn:non-exact-resolution-of-syzygy-bundle}
  \begin{tikzcd}
      \cdots \arrow{r} & Q_2 \arrow{r} & Q_1 \arrow{r} & Q_0 \arrow{r} & 0
  \end{tikzcd}
 \end{equation}
of vector bundles on $X^{[2]}$ such that
 \[
  Q_0 = M_{\Lcal'}^{\otimes q} \otimes \Lcal'^{k}(-r_0B)
 \]
and $Q_i$ is a direct sum of vector bundles of the form
 \[
  M_{\Lcal'}^{\otimes p} \otimes \Lcal'^{k}(-r_0B) \otimes A'^{-i}
 \]
with $p \leq q-1$.
Regarding the homology sheaves, we have: when $i \leq q-1$,
 \[
  \Hcal_i(Q_{\bullet}) = 0;
 \]
when $i \geq q$,
 \[
  \Hcal_i(Q_{\bullet}) = \bigoplus_{\substack{a_1 + \dots + a_q = i \\ a_1, \dots, a_q \geq 1}} \bigwedge^{a_1} N^{\vee} \otimes \dots \otimes \bigwedge^{a_q} N^{\vee} \otimes \Lcal'^{q+k}(-r_0B)
 \]
by the K\"unneth formula, where $N = N_{X^{[2]}/\PP(H^0(A'))}$ be the normal bundle.
Each direct summand can be written as
 \begin{align*}
   &\bigwedge^{a_1} N^{\vee} \otimes \dots \otimes \bigwedge^{a_q} N^{\vee} \otimes \Lcal'^{q+k}(-r_0B) \\
   &= \bigwedge^{e-a_1} (N \otimes A'^{\vee}) \otimes \dots \otimes \bigwedge^{e-a_q} (N \otimes A'^{\vee}) \\
   &\qquad \otimes \left[ (q+k)\Lcal'-qK_{X^{[2]}} - \left( q(2n+1) + \sum a_i \right) A' - r_0B \right]
 \end{align*}
where $e = \rank(N)$.
Here we have
 \begin{align*}
  &(q+k)\Lcal'-qK_{X^{[2]}} - \left( q(2n+1) + \sum a_i \right) A' - r_0B \\
  &= K_{X^{[2]}} + \left( (q+k)(d-2n-1) + (k+1)n+1 - \sum a_i \right) A^{[2]} \\
  &\qquad - \frac{2r_0 + n+k-2 - q(n+2) - \sum a_i}{2}B + C_0
 \end{align*}
for some nef line bundle $C_0$ because
 \[
  (K_X + (n+1)A)^{[2]} = K_{X^{[2]}} + (n+1)A^{[2]} - \frac{n-2}{2}B
 \]
is nef.
If we choose $r_0$ large enough, then one can apply the vanishing theorem of Le Potier-Sommese type (cf. \cite[Proposition~1.7]{EL1993}) to obtain
 \[
  H^{i+1}(X^{[2]}, \Hcal_i(Q_{\bullet})) = 0.
 \]
In a similar fashion, one can obtain
 \[
  H^{i}(X^{[2]}, Q_{i}) = 0
 \]
as well.
From the hypercohomology spectral sequence associated to the complex (\ref{eqn:non-exact-resolution-of-syzygy-bundle}), we get the desired vanishing (\ref{eqn:vanishing-on-X2}).
Fix such an $r_0$.

For (\ref{eqn:vanishing-on-B}), we will use the results of Park \cite{Park2007} on the syzygies of projective bundles.
Recall that a vector bundle $\Ecal$ is nef if the tautological line bundle $\Ocal_{\PP(\Ecal)}(1)$ is nef on $\PP(\Ecal)$.
He proved that the tautological line bundle on the projective bundle satisfies $(N_p)$ when the vector bundle is sufficiently positive.
More precisely:

\begin{thm} [{\cite[Theorem~1.2]{Park2007}}] \label{thm:syzygies-of-projective-bundles}
    Let $X$ be a smooth projective variety of dimension $n$, and let $\Ecal$ be a nef vector bundle of rank $r$ on $X$.
    Suppose that a very ample line bundle $A$, a nef line bundle $D$ and an integer $e \geq 0$ are given such that $A^{e} \otimes \Ecal \otimes \Ecal^{\vee}$ is nef.
    Let $\pi: \PP(\Ecal) \rightarrow X$ be the projection map and $\xi$ be the tautological line bundle.
    Then $\xi + \pi^{\ast}(K_X + fA + D)$ satisfies $(N_p)$ for $f \geq er+n+1+p$.
\end{thm}

Following the proof of Theorem~\ref{thm:syzygies-of-projective-bundles}, for fixed $q,r \geq 0$ one can obtain
 \begin{equation} \label{eqn:positivity-of-A-required-2}
  H^i \left( B, \bigwedge^q M_{\Lcal'|_B} \otimes f^{\ast}\Lcal^{2k} \otimes \xi^{2r+k} \right)
    = 0,
 \end{equation}
i.e.,
 \[
  H^i \left( B, \bigwedge^q M_{f^{\ast}\Lcal \otimes (nef) \otimes \xi} \otimes (f^{\ast}\Lcal \otimes \xi)^k \otimes (f^{\ast}\Lcal^k \otimes \xi^{2r}) \right)
    = 0
 \]
for $i, k \geq 1$ if $A$ is sufficiently positive that $\Lcal \otimes \xi^{2r}$ is nef.
Consider the following diagram
 \[
  \begin{tikzcd}
      & 0 \arrow{d} & 0 \arrow{d} & & \\
      & H^0(\Lcal'(-B)) \otimes \Ocal_B \arrow[r,equal] \arrow{d} & H^0(\Lcal'(-B)) \otimes \Ocal_B \arrow{d} & & \\
      0 \arrow{r} & M_{\Lcal'}|_B \arrow{r} \arrow{d} & H^0(\Lcal') \otimes \Ocal_B \arrow{r} \arrow{d} & \Lcal'|_B \arrow{r} \arrow[d,equal] & 0 \\
      0 \arrow{r} & M_{\Lcal'|_B} \arrow{r} \arrow{d} & H^0(\Lcal'|_B) \otimes \Ocal_B \arrow{r} \arrow{d} & \Lcal'|_B \arrow{r} & 0 \\
      & 0 & 0 & &
  \end{tikzcd}
 \]
The second column is exact because $H^1(\Lcal'(-B)) = 0$.
Let $W = H^0(\Lcal'(-B))$ for simplicity.
Then the Eagon-Northcott complex reads as
 \[
  \begin{tikzcd}[column sep=small]
      0 \arrow{r} & S^qW \otimes \Ocal_B \arrow{r} & S^{q-1}W \otimes M_{\Lcal'}|_B \arrow{r} & S^{q-2}W \otimes \bigwedge^2 M_{\Lcal'}|_B & \\
      \arrow{r} & \cdots \arrow{r} & \bigwedge^q M_{\Lcal'}|_B \arrow{r} & \bigwedge^q M_{\Lcal'|_B} \arrow{r} & 0.
  \end{tikzcd}
 \]
From the hypercohomology spectral sequence, once we show that
 \begin{equation} \label{eqn:vanishing-of-syzygy-bundle}
  H^i \left( B, \bigwedge^{q-i} M_{\Lcal'}|_B \otimes f^{\ast}\Lcal^{2k} \otimes \xi^{2r+k} \right) = 0
 \end{equation}
for $i \geq 1$ and $0 \leq r < r_0$, we would get the desired vanishing (\ref{eqn:vanishing-on-B}) as the differentials coming in and going out from
 \[
  H^1 \left( B, \bigwedge^q M_{\Lcal'}|_B \otimes f^{\ast}\Lcal^{2k} \otimes \xi^{2r+k} \right)
 \]
are all zero.
The vanishing (\ref{eqn:vanishing-of-syzygy-bundle}) is done by the induction on $i$.
Note that the assumption on the ampleness of $A$ is invoken only finitely many times.
\end{proof}

\begin{rmk} \label{rem:bound-for-positivity}
    In the proof $d \geq 3n+1$ suffices, but we do not obtain a bound for positivity of $A$ due to the vanishing of (\ref{eqn:positivity-of-A-required}) and (\ref{eqn:positivity-of-A-required-2}).
\end{rmk}

%---------------------------------------------------------------------
\section{Application} \label{sec:application}

We explore the geometry of $X^{[2]}$ more thoroughly, keeping the notations from the previous sections.
For a smooth projective variety $X$, consider the universal family $\Xi_2 \subset X \times X^{[2]}$ of two points.
Let $\pr_1:\Xi_2 \rightarrow X$ and $\pr_2:\Xi_2 \rightarrow X^{[2]}$ be the projection maps.
For a very ample line bundle $\Lcal$ on $X$, define the vector bundle $\Ecal_{\Lcal} = \pr_{2, \ast}(\pr_1^{\ast} \Lcal)$ of rank $2$ on $X^{[2]}$.
Let $X \subset \PP(V)$ be the embedding induced by $\Lcal$ where $V = H^0(X,\Lcal)^{\vee}$.
Consider a surjection $\Ocal_{X^{[2]}}^{\oplus (n+1)} \rightarrow \Ecal_{\Lcal}$ on $X^{[2]}$ given by $H^0(X,\Lcal) \rightarrow H^0(X,\Lcal \otimes \Ocal_{Z})$ over any $[Z] \in X^{[2]}$.
This induces an inclusion $\PP(\Ecal_{\Lcal}) \subset X^{[2]} \times \PP(V)$, and the image of the projection $u:\PP(\Ecal_{\Lcal}) \subset X^{[2]} \times \PP(V) \rightarrow \PP(V)$ is exactly the secant variety $\mathrm{Sec}(X)$ (cf. \cite[Section~3]{Vermeire2001}).

Theorem~\ref{thm:Singularity-of-secant-variety} follows by showing that $u:\PP(\Ecal_{\Lcal}) \rightarrow \mathrm{Sec}(X)$ is isomorphic to the blowup morphism $\mathrm{Bl}_{X}(\mathrm{Sec}(X)) \rightarrow \mathrm{Sec}(X)$.

For the secant variety of $X^{[2]}$, Theorem~\ref{thm:Singularity-of-secant-variety} cannot be directly applied because the blowup does not have a smooth $\PP^1$-bundle structure as it is obstructed by the existence of lines.
Nevertheless, one can adapt the proof to show that the secant variety of $X^{[2]}$ is smooth outside $X^{[2]}$ under some positivity conditions.

In the remaining of this section, we assume that $\Lcal$ is sufficiently positive (specifically, $4$-very ample) and $\Lcal'$ satisfies the property $(N_2)$.
Our first application is to show the smoothness of $\mathrm{Sec}(X^{[2]})$ outside $X^{[2]}$.

When $X$ is a curve, the positivity of the line bundle $\Lcal^{[2]}$ increases with that of $\Lcal$.
Thus \cite[Corollary~3.10]{Vermeire2001} can be directly applied.

From now on, we assume that $\dim X \geq 2$.
Let $Q_0, \dots, Q_s$ be quadrics defining $X^{[2]} \subset \PP^N = \Pbf(V)$ where $V$ is a vector space of dimension $N+1$ over $\CC$, and construct a rational map $\varphi:\PP^N \dashrightarrow \PP^s$ using the quadrics.
This yields a regular map $\widetilde{\varphi}:\widetilde{\PP^N} = \mathrm{Bl}_{X^{[2]}} \PP^N \rightarrow \PP^s$.
Let $\widetilde{\Sigma}, \widetilde{B} \subset \widetilde{\PP^N}$ be the proper transforms of $\Sigma$ and $B$, respectively.
Observe that $\varphi$ is an embedding out of $\Sigma$ by \cite[Remark~2.11]{Vermeire2001}.

\begin{lem}[cf. {\cite[Remark~3.4]{Vermeire2001}}] \label{lem:fiber-of-blowup-of-secant}
    For any $a \in \widetilde{\varphi}(\widetilde{\Sigma} \setminus \widetilde{B})$, the fiber $\widetilde{\varphi}^{-1}(a)$ maps isomorphically to a line in $\PP^N$ under the projection map $\pi_1:\PP^N \times \PP^s \rightarrow \PP^N$.
\end{lem}

\begin{proof}
    Define $T \subset \PP_z^N \times \PP_t^s$ by $\sum_{k=0}^s a_{\ell k}t_k$, where the vectors $(a_{\ell 0}, \dots, a_{\ell s})$ for $0 \leq \ell \leq r$ generate the linear syzygies among $Q_i$'s.
    Then for $a \in \widetilde{\varphi}(\widetilde{\Sigma} \setminus \widetilde{B})$, by following \cite[Proposition~2.8]{Vermeire2001}, we have either
    \begin{enumerate}
        \item $\widetilde{\varphi}^{-1}(a)$ is a reduced point;
        \item $\pi_1(\widetilde{\varphi}^{-1}(a)) = \PP^k \subset \PP^N$ for some $k > 0$ and it intersects $X^{[2]}$ in a quadric not entirely contained in $B$; or
        \item $\widetilde{\varphi}^{-1}(a) \subset T_a$ and $\pi_1(T_a)$ is a linear space in $X^{[2]}$ not entirely contained in $B$.
    \end{enumerate}
    
    Suppose that $k > 1$ in the case (2).
    By taking general hyperplane sections, we may assume that $X^{[2]}$ meets a plane $\PP^2 \subset \PP^N$ in a conic $C$ that is not entirely contained in $B$.
    If $C$ is irreducible and reduced, choose four general points $p_1, \dots, p_4$ from $C$.
    Then two secant lines $\ell = \langle p_1, p_2 \rangle$ and $\ell' = \langle p_3, p_4 \rangle$ meet.
    According to Proposition~\ref{prop:Secant-lines-intersect}, one of them lies in $X^{[2]}$, say $\ell$ does.
    But then $C$ cannot be irreducible, a contradiction.
    If $C$ is a union of two lines, by Lemma~\ref{lem:lines-in-Hilbert-scheme}, lines in $X^{[2]}$ are exactly those in the fiber of the projective bundle $B\simeq \PP(\Omega_X) \rightarrow X$.
    This leads to a contradiction.
    If $C$ is a double line, any lines in $\PP^2$ are tangent to $X^{[2]}$, and those tangent lines intersect.
    Thus $\PP^2$ itself is contained in $B$ due to Proposition~\ref{prop:Tangent-lines-intersect}, which is absurd.
    
    In conclusion, the fibers of $\widetilde{\varphi}$ project to either lines or points.
    It is noteworthy that the proper transforms of secant lines are contracted by $\widetilde{\varphi}$; indeed, two quadrics on $\PP^1$ sharing two roots are proportional.
    Thus all the fibers of $\widetilde{\varphi}$ project to lines by the semicontinuity of fiber dimension.
\end{proof}

Observe that for a secant line or a tangent line $\ell$ not in $X^{[2]}$, its proper transform does not meet $\widetilde{B}$.
Thus for $a \in \widetilde{\varphi}(\widetilde{\Sigma} \setminus \widetilde{B})$, the fiber $\widetilde{\varphi}^{-1}(a)$ is outside $\widetilde{B}$.

According to \cite[Lemma~3.5-3.7]{Vermeire2001}, one can deduce that the image of $\widetilde{\varphi}|_{\widetilde{\Sigma} \setminus \widetilde{B}}$ coincides with $(X^{[2]})_{\mathrm{id}}^{[2]} = (X^{[2]})^{[2]} \setminus \Hcal_{B/X}^{2}$, where $\Hcal_{B/X}^{2}$ is the Hilbert scheme of $0$-cycles of length $2$ lying on the fibers of $B \rightarrow X$.

Consider the universal family of lines
 \[
  \Ucal = \{ (p,[\ell]) \in \PP^N \times \Gr(2,V): p \in \ell \} \subset \PP^N \times \Gr(2,V).
 \]
Then $\Ucal$ is the projective bundle over $\Gr(2,V)$; in fact, $\Ucal \simeq \PP(U^{\vee})$ where $U$ is the universal subbundle on $\Gr(2,V)$.
By construction, the map $\widetilde{\varphi}|_{\widetilde{\Sigma} \setminus \widetilde{B}}$ is the restriction of the projection map $\Ucal \rightarrow \Gr(2,V)$ to $(X^{[2]})_{\mathrm{id}}^{[2]}$.
Observe that $\widetilde{\varphi}_{\ast} \Ocal_{\widetilde{\Sigma} \setminus \widetilde{B}}(H) \simeq U^{\vee}|_{(X^{[2]})_{\mathrm{id}}^{[2]}}$, where $H$ is the pullback of the hyperplane section of $\PP^N$.
Then the universal family $\Xi_2 \subset X^{[2]} \times (X^{[2]})_{\mathrm{id}}^{[2]}$ of two points is embedded into $\widetilde{\Sigma} \setminus \widetilde{B}$ over $(X^{[2]})_{\mathrm{id}}^{[2]}$ in such a way that the fibers over $(X^{[2]})_{\mathrm{id}}^{[2]}$ can be regarded as consisting of two points in $X^{[2]}$ lying on a (secant or tangent) line not entirely contained in $X^{[2]}$.
This gives rise to a surjection of vector bundles
 \[
  U^{\vee}|_{(X^{[2]})_{\mathrm{id}}^{[2]}} \twoheadrightarrow (\pi_2)_{\ast}(\Ocal_{\Xi_2}(H)) \simeq \Ecal_{\Lcal'}|_{(X^{[2]})_{\mathrm{id}}^{[2]}},
 \]
where $\pi_2:\PP^N \times (X^{[2]})^{[2]} \rightarrow (X^{[2]})^{[2]}$ is the projection map.
Since they have both rank $2$, this map is in fact an isomorphism.
Arguing as in \cite[Theorem~3.9]{Vermeire2001} yields an isomorphism $\widetilde{\Sigma} \setminus \widetilde{B} \simeq \PP_{(X^{[2]})_{\mathrm{id}}^{[2]}}(\Ecal_{\Lcal'})$.

In summary, one can deduce the following:

\begin{thm} \label{thm:smoothness-of-secant}
    Under the above setting, $\mathrm{Sec}(X^{[2]})$ is smooth outside $X^{[2]}$.
\end{thm}

Now assume that $X$ is a surface.
Our second application is to analyze the geometry of a resolution of singularities $\PP(\Ecal_{\Lcal'}) \rightarrow \Sigma = \mathrm{Sec}(X^{[2]})$.

Akin to Lemma~\ref{lem:fiber-of-blowup-of-secant}, the image of $\widetilde{\varphi}$ coincides with the image of the map
 \[
  \varphi_{1, \Lcal'}: (X^{[2]})^{[2]} \rightarrow \Gr(2,V),
 \]
which is defined by sending $[Z] \in (X^{[2]})^{[2]}$ to the line in $\PP^N$ determined by $Z$.
Observe that by Theorem~\ref{thm:embedding-into-Grassmannian}, the map $\varphi_{1, \Lcal'}$ might not be an embedding as $\Lcal'$ is not $2$-very ample.

Let
 \[
  \widetilde{\widetilde{\Sigma}} = \PP_{(X^{[2]})^{[2]}}(\varphi_{1, \Lcal'}^{\ast}U^{\vee})
    \subset \PP^N \times (X^{[2]})^{[2]}
 \]
be the fibered product
 \[
  \begin{tikzcd}
      \widetilde{\widetilde{\Sigma}} \arrow{r} \arrow{d} & (X^{[2]})^{[2]} \arrow[d, "\varphi_{1, \Lcal'}"] \\
      \widetilde{\Sigma} \arrow[r, "\widetilde{\varphi}"] & \im \varphi_{1, \Lcal'}.
  \end{tikzcd}
 \]
Arguing as before, we yield an isomorphism $\widetilde{\widetilde{\Sigma}} \simeq \PP(\Ecal_{\Lcal'})$.

To analyze the map $\PP(\Ecal_{\Lcal'}) \rightarrow \widetilde{\Sigma}$, we examine the geometry of the map $\varphi_{1, \Lcal'}$ further.
For $[Z] \in (X^{[2]})^{[2]}$, denote by $\langle Z \rangle \subset \PP^N$ the corresponding line.
If $\langle Z \rangle$ is not contained in $X^{[2]}$, then the set-theoretic fiber is a singleton $[Z]$.
On the other hand, if $\langle Z \rangle$ lies in $X^{[2]}$, then the set-theoretic fiber is the set of $0$-cycles of length $2$ on $\PP^1$, i.e., $(\PP^1)^{[2]} \simeq \PP^2$.

The scheme structure of the fibers can be analyzed via the following lemma:

\begin{lem}[cf. {\cite[Corollary~1.2]{CG1990}}] \label{lem:Hilbert-scheme-to-Grassmannian}
    Let $X$ be a smooth projective variety, and let $\Lcal$ be a $(d-1)$-very ample line bundle on $X$.
    Let $Z$ be a length $d$ subscheme on $X$, and let $f:\Ical_Z/\Ical_Z^2 \rightarrow \Ocal_Z$ be a nonzero $\Ocal_Z$-homomorphism.
    Then the induced linear map $d\varphi_{d, \Lcal}(f):H^0(X, \Lcal \otimes \Ical_Z) \rightarrow H^0(X, \Lcal \otimes \Ocal_Z)$ is nonzero if and only if there is a length $d+1$ subscheme $W$ on $X$ such that
    \begin{enumerate}
        \item $\Ical_{Z}^2 \subset \Ical_W \subset \Ical_Z$;
        \item $\ker(f) \subset \Ical_W/\Ical_Z^2$;
        \item the map $H^0(X, \Lcal) \rightarrow H^0(X, \Lcal \otimes \Ocal_{W})$ is surjective.
    \end{enumerate}
\end{lem}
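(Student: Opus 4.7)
The plan begins with making the differential explicit. Using the standard identification $T_{X^{[d]},[Z]} \simeq \Hom_{\Ocal_Z}(\Ical_Z/\Ical_Z^2, \Ocal_Z)$ via first-order deformations $\widetilde{\Ical}_Z \subset \Ocal_X \otimes \CC[\ve]/(\ve^2)$, I would compute $d\varphi_{d-1}(f)$ by lifting $s \in H^0(\Lcal \otimes \Ical_Z)$ to $\widetilde{s} = s + \ve s' \in H^0(\Lcal \otimes \widetilde{\Ical}_Z)$ and reading off $s' \bmod H^0(\Lcal \otimes \Ical_Z)$. A direct check shows this agrees with the composition
\[
    H^0(\Lcal \otimes \Ical_Z) \longrightarrow H^0(\Lcal \otimes \Ical_Z/\Ical_Z^2) \xrightarrow{\Lcal \otimes f} H^0(\Lcal \otimes \Ocal_Z),
\]
where the first arrow is the restriction of $s$ to its first-order jet $s|_Z^{(1)}$. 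So $d\varphi_{d-1}(f)$ is nonzero if and only if some $s$ has $s|_Z^{(1)} \notin \Lcal \otimes \ker f$.

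For the ``if'' direction, assume $W$ satisfies (1)--(3). I would set up the commutative square built from $\Ical_W \hookrightarrow \Ical_Z$ and $\Ical_W \hookrightarrow \Ocal_X$, yielding two short exact sequences with common kernel $\Ical_W$ and the inclusion of cokernels $\Ical_Z/\Ical_W \hookrightarrow \Ocal_W$. Tensoring with $\Lcal$ and taking cohomology, condition (3) forces the connecting map $\delta: H^0(\Lcal \otimes \Ocal_W) \to H^1(\Lcal \otimes \Ical_W)$ to vanish, and restricting to $H^0(\Lcal \otimes \Ical_Z/\Ical_W)$ shows the connecting map of the top sequence also vanishes. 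Thus $H^0(\Lcal \otimes \Ical_Z) \twoheadrightarrow H^0(\Lcal \otimes \Ical_Z/\Ical_W)$, which is $1$-dimensional since $\Ical_Z/\Ical_W$ is a length-$1$ skyscraper. Pick $s$ mapping to a nonzero element. By (1) the projection $\Ical_Z \to \Ical_Z/\Ical_W$ factors through $\Ical_Z/\Ical_Z^2$, so $s|_Z^{(1)} \notin \Lcal \otimes (\Ical_W/\Ical_Z^2)$; combined with (2), $s|_Z^{(1)} \notin \Lcal \otimes \ker f$, giving $d\varphi_{d-1}(f)(s) \neq 0$.

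For the ``only if'' direction, pick $s \in H^0(\Lcal \otimes \Ical_Z)$ with $d\varphi_{d-1}(f)(s) \neq 0$, and fix $p \in \supp Z$ where $(\Lcal \otimes f)(s|_Z^{(1)})_p \neq 0$ in $\Lcal|_p \otimes \Ocal_{Z,p}$. I would construct $\Ical_W$ by choosing a codimension-$1$ $\Ocal_{X,p}$-submodule $H_p \subset (\Ical_Z/\Ical_Z^2)_p$ containing $(\ker f)_p$ but not containing $s|_{Z,p}^{(1)}$ (after trivializing $\Lcal$ locally at $p$), setting $\Ical_{W,p} = \Ical_{Z,p}^2 + \widetilde{H}_p$ where $\widetilde{H}_p \subset \Ical_{Z,p}$ is the preimage of $H_p$, and keeping $\Ical_{W,q} = \Ical_{Z,q}$ for $q \neq p$. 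Then $W$ has length $d+1$ and satisfies (1) and (2). For (3), by construction $s$ restricts nontrivially to $\Lcal \otimes \Ical_Z/\Ical_W$, making $H^0(\Lcal \otimes \Ical_Z) \to H^0(\Lcal \otimes \Ical_Z/\Ical_W)$ surjective onto the $1$-dimensional target; combined with $(d-1)$-very ampleness yielding $H^0(\Lcal) \twoheadrightarrow H^0(\Lcal \otimes \Ocal_Z)$, the short exact sequence $0 \to \Ical_Z/\Ical_W \to \Ocal_W \to \Ocal_Z \to 0$ and its cohomology force (3).

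The main obstacle is the existence of $H_p$ in the ``only if'' direction. Via the correspondence between codim-$1$ $\Ocal_{X,p}$-submodules of $(\Ical_Z/\Ical_Z^2)_p$ containing $(\ker f)_p$ and nonzero $\Ocal_{X,p}$-linear functionals $\im f_p \to \CC$, existence of $H_p$ is equivalent to $(\Lcal \otimes f)(s|_{Z,p}^{(1)}) \notin \Lcal|_p \otimes \mfr_p \cdot \im f_p$. Upgrading the nonvanishing of $f_p(V_Z)$ (where $V_Z$ is the image of $H^0(\Lcal \otimes \Ical_Z)$ in $\Lcal \otimes \Ical_Z/\Ical_Z^2$) to this stronger statement is the only delicate point, and I expect it to follow from Nakayama's lemma applied to $\im f_p$ as a finitely generated module over the Artinian local ring $\Ocal_{Z,p}$, possibly after replacing $s$ by another witness of $d\varphi_{d-1}(f) \neq 0$.
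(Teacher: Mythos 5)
The paper offers no proof of this lemma --- it is quoted from Catanese--G\"ottsche with a ``cf.'' --- so your attempt has to stand on its own. Your identification of the differential with $s \mapsto (\Lcal \otimes f)(s|_Z^{(1)})$ is correct, and your ``if'' direction is complete: conditions (1)--(3) force $H^0(\Lcal \otimes \Ical_W) \subsetneq H^0(\Lcal \otimes \Ical_Z)$, and any $s$ in the difference has $s|_Z^{(1)} \notin \Lcal \otimes (\Ical_W/\Ical_Z^2) \supseteq \Lcal \otimes \ker(f)$, so $d\varphi(f)(s) \neq 0$.

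The gap you flag in the ``only if'' direction is, however, genuine and cannot be closed. The subschemes $W$ satisfying (1) and (2) correspond exactly to the colength-one $\Ocal_X$-submodules $N \subset \im f$, and every such $N$ contains $\mfr \cdot \im f$; hence a $W$ also satisfying (3) exists if and only if the image of $H^0(\Lcal \otimes \Ical_Z)$ in $\Lcal \otimes \im f$ is \emph{not} contained in $\Lcal \otimes \mfr \cdot \im f$. Nakayama only gives $\im f \neq \mfr \cdot \im f$; it says nothing about where the global sections land, and $(d-1)$-very ampleness is too weak to push them out of $\mfr \cdot \im f$. Concretely, let $X$ be an elliptic curve and $\Lcal = \Ocal_X(3p)$ with $p$ a flex of the resulting plane cubic (so $d = 2$ and $\Lcal$ is $1$-very ample), let $Z = 2p$, and let $f$ be the rank-two tangent vector with $f(x^2) = 1$ for a local parameter $x$ at $p$. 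Then $H^0(\Lcal \otimes \Ical_Z)$ is spanned by the single section cutting out the flex tangent, with divisor $3p$; its image under $f$ is $x \cdot (\mathrm{unit}) \in \mfr_p \Ocal_Z \setminus \{0\}$, so $d\varphi_1(f) \neq 0$, yet the only $W$ satisfying (1)--(2) is $W = 3p$, for which $H^0(\Lcal) \to H^0(\Lcal \otimes \Ocal_{3p})$ is not surjective. So the ``only if'' implication of the statement itself fails; replacing $s$ by another witness cannot help since the space of sections involved is one-dimensional. The implication does hold whenever $\mfr \cdot \im f = 0$ (e.g.\ $\rank f = 1$, or $Z$ reduced), which is the situation in most, but not all, of the paper's applications in Section~\ref{sec:application}; in the source CG1990 the criterion is deployed under $d$-very ampleness, where condition (3) is automatic and the issue does not arise.
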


If $\langle Z \rangle \not\subset X^{[2]}$, the line intersects $X^{[2]}$ in at most two points because $\Lcal'$ satisfies $(N_2)$.
Thus for any choices of $W$ satisfying condition (1) in Lemma~\ref{lem:Hilbert-scheme-to-Grassmannian}, the linear space spanned by $W$ must be a plane.
As a result, the fiber $\varphi_{1, \Lcal'}^{-1}(\varphi_{1, \Lcal'}([Z]))$ is a reduced point $[Z]$.

Assume that $\langle Z \rangle \subset X^{[2]}$.
If $Z$ is reduced, there are exactly two length 3 subschemes $W_1, W_2 \subset \langle Z \rangle$ that has the same support as $Z$.
Then there are tangent vectors $f_i \in T_{(X^{[2]})^{[2]}, [Z]}$ such that $\ker(f_i) = \Ical_{W_i}/\Ical_Z^2$ and $d\varphi_{1, \Lcal'}(f_i)$ is zero.
These two give rise to a $2$-dimensional family of tangent directions that are collapsed by $\varphi_{1, \Lcal'}$.
If we choose $W$ satisfying condition (1) in Lemma~\ref{lem:Hilbert-scheme-to-Grassmannian} out of these two, the linear space spanned by $W$ is a plane.
Thus the fiber $\varphi_{1, \Lcal'}^{-1}(\varphi_{1, \Lcal'}([Z]))$ is reduced at $[Z]$.

When $Z$ is nonreduced, if a tangent vector $f \in T_{(X^{[2]})^{[2]}, [Z]}$ is given such that $\ker(f) = \Jcal/\Ical_Z^2$ for some $\Ical_Z^2 \subset \Jcal \subset \Ical_Z$ and the subscheme $W$ defined by $\Jcal$ is curvilinear, then $d\varphi_{1, \Lcal'}(f) = 0$ as $W \subset \langle Z \rangle$.
These yield a $2$-dimensional family of tangent directions collapsed by $\varphi_{1, \Lcal'}$.
If $f$ is chosen outside of the family, $d\varphi_{1, \Lcal'}(f)$ is nonzero as above.
This proves that the fiber $\varphi_{1, \Lcal'}^{-1}(\varphi_{1, \Lcal'}([Z]))$ is reduced.

In conclusion, one can describe the geometry of a resolution of singularities $\PP(\Ecal_{\Lcal'}) \rightarrow \Sigma$ as follows:

\begin{prop} \label{prop:resolution-of-singularities-of-secant}
    We have a diagram
     \[
      \begin{tikzcd}
          \widetilde{\widetilde{\Sigma}} \arrow[r, "\simeq"] \arrow[rd] \arrow[d] & \PP(\Ecal_{\Lcal'}) \arrow{d} \\
          \widetilde{\Sigma} \arrow[rd, "\widetilde{\varphi}"] \arrow{d} & (X^{[2]})^{[2]} \arrow[d, "\varphi_{1, \Lcal'}"] \\
          \Sigma \arrow[r, dashrightarrow] & \im \varphi_{1, \Lcal'}
      \end{tikzcd}
     \]
    The parallelogram at the center is cartesian.
    Furthermore,
    \begin{enumerate}
        \item If $[Z] \in (X^{[2]})^{[2]}$ corresponds to a line not lying in $X^{[2]}$, then the fiber $\varphi_{1, \Lcal'}^{-1}(\varphi_{1, \Lcal'}([Z]))$ is a reduced point;
        \item Otherwise, the fiber is isomorphic to $\PP^2$.
    \end{enumerate}
\end{prop}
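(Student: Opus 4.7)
The plan is to assemble the construction developed in the paragraphs preceding the statement into the form of Proposition~\ref{prop:resolution-of-singularities-of-secant}. The identification $\widetilde{\widetilde{\Sigma}} \simeq \PP(\Ecal_{\Lcal'})$ is already in hand via the surjection $\varphi_1^{\ast}U^{\vee} \twoheadrightarrow \Ecal_{\Lcal'}$ of rank-$2$ vector bundles (which is an isomorphism by rank), and $\widetilde{\widetilde{\Sigma}}$ was constructed as the fiber product $\widetilde{\Sigma} \times_{\im\varphi_1}(X^{[2]})^{[2]}$, so the central parallelogram is cartesian by definition. Consequently, only the fiber description of $\varphi_1$ remains to be verified.

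I would case-split on whether $\langle Z \rangle$ lies in $X^{[2]}$. When $\langle Z \rangle \not\subset X^{[2]}$, the hypothesis $(N_2)$ forces $X^{[2]}$ to be cut out by quadrics, hence $\langle Z \rangle \cap X^{[2]}$ has length at most $2$; consequently, the set-theoretic preimage of $\varphi_1([Z])$ is $\{[Z]\}$. To see that the scheme-theoretic fiber is reduced, I would apply Lemma~\ref{lem:Hilbert-scheme-to-Grassmannian}: any nonzero tangent vector $f \in \ker d\varphi_1$ would produce a length-$3$ subscheme of $X^{[2]}$ whose span is the plane $\langle Z, f \rangle$, and since $(N_2)$ controls the linear syzygies among those quadrics, such a plane cannot cut $X^{[2]}$ in more than a conic containing $Z$ — a contradiction unless the plane lies in $X^{[2]}$, which is excluded.

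When $\langle Z \rangle \subset X^{[2]}$, the set-theoretic fiber identifies with the Hilbert scheme of length-$2$ subschemes of $\langle Z \rangle \simeq \PP^1$, giving $(\PP^1)^{[2]} \simeq \PP^2$. For reducedness, I would compute $\dim \ker d\varphi_1$ at each $[Z]$ in the fiber by exhibiting a $2$-dimensional family of collapsed tangent directions (the two choices of length-$3$ extensions $W_1, W_2 \subset \langle Z \rangle$ when $Z$ is reduced, or curvilinear extensions $W \subset \langle Z \rangle$ in the nonreduced case) and then invoking Lemma~\ref{lem:Hilbert-scheme-to-Grassmannian} to show that any other tangent direction has nonzero image, forcing $\dim \ker d\varphi_1 = 2 = \dim \PP^2$.

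The main obstacle I expect is precisely the upper bound on $\dim \ker d\varphi_1$ in case~(2), particularly at nonreduced $[Z] \in (X^{[2]})^{[2]}$ where the infinitesimal structure of the Hilbert scheme becomes delicate. Constructing the $2$-dimensional family of collapsed directions only yields a lower bound, and matching the dimension against $\PP^2$ requires using Lemma~\ref{lem:Hilbert-scheme-to-Grassmannian} together with the $(N_2)$ hypothesis to certify that every length-$3$ extension $W$ of $Z$ in $X^{[2]}$ whose evaluation map to $H^0(\Lcal' \otimes \Ocal_W)$ is surjective must already be contained in $\langle Z \rangle$; this is what prevents extraneous tangent vectors from lying in the kernel.
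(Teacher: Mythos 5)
Your overall architecture is the paper's: the isomorphism $\widetilde{\widetilde{\Sigma}} \simeq \PP(\Ecal_{\Lcal'})$ and the cartesian square come for free from the preceding construction, and the fiber description reduces to a tangent-space computation via Lemma~\ref{lem:Hilbert-scheme-to-Grassmannian}, with the set-theoretic fibers ($\{[Z]\}$ resp.\ $(\PP^1)^{[2]} \simeq \PP^2$) and the two-dimensional family of collapsed directions identified exactly as in the paper. However, you apply Lemma~\ref{lem:Hilbert-scheme-to-Grassmannian} in the wrong direction, and in case~(1) this produces a non-proof. The lemma says $d\varphi_1(f) \neq 0$ if and only if there \emph{exists} a length-$3$ subscheme $W$ of $X^{[2]}$ with $\Ical_Z^2 \subset \Ical_W \subset \Ical_Z$, $\ker(f) \subset \Ical_W/\Ical_Z^2$, and \emph{surjective} evaluation $H^0(\Lcal') \to H^0(\Lcal' \otimes \Ocal_W)$; surjectivity here is equivalent to $W$ spanning a plane, i.e.\ to $W \not\subset \langle Z\rangle$. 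So a vector $f \in \ker d\varphi_1$ does not ``produce a length-$3$ subscheme whose span is a plane'' --- it asserts that \emph{every} admissible $W$ spans only the line. The contradiction you then invoke (``such a plane cannot cut $X^{[2]}$ in more than a conic containing $Z$'') is not a contradiction: a conic contains plenty of length-$3$ subschemes. The correct, direct argument for case~(1) is: any nonzero $f$ admits some $W$ satisfying conditions~(1) and~(2) of the lemma; since $\langle Z\rangle \cap X^{[2]}$ has length at most $2$ while $W$ has length $3$ and contains $Z$, necessarily $W \not\subset \langle Z\rangle$, so $W$ spans a plane, condition~(3) holds, and $d\varphi_1(f) \neq 0$.

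The same inversion reappears in your final paragraph: you propose to certify that ``every $W$ whose evaluation map is surjective must already be contained in $\langle Z\rangle$,'' which is the opposite of what is true (a $W$ contained in the line imposes only two conditions on sections, so its evaluation map is never surjective) and the opposite of what is needed. In case~(2) the statement you actually want is: the collapsed directions are exactly those $f$ for which every admissible $W$ lies in $\langle Z\rangle$ (these form the two-dimensional family coming from the length-$3$ subschemes of $\langle Z\rangle \simeq \PP^1$ extending $Z$, treated separately for $Z$ reduced and nonreduced as in the paper), while any other $f$ admits a $W$ escaping the line, hence spanning a plane, hence witnessing $d\varphi_1(f)\neq 0$; this gives $\dim\ker d\varphi_1 = 2 = \dim \PP^2$ and reducedness. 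With the direction of the lemma corrected, your plan coincides with the paper's proof.
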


%---------------------------------------------------------------------
\section{Open questions} \label{sec:questions}

In conclusion, we raise some questions on the Hilbert scheme of points.

Our primary focus was on the Hilbert scheme of two points.
We propose the following natural question:

\begin{ques}
    Determine the identifiable locus of $\mathrm{Sec}(X^{[3]})$.
\end{ques}

We expect that points on a secant line $\langle [Z_1], [Z_2] \rangle$ (of Hamming distance $2$) are non-identifiable if and only if $Z_1$, $Z_2$ are supported in the same one point of $X$.
Similarly, we expect that points on a tangent line at $[Z]$ (of rank $2$) are non-identifiable if and only if $Z$ is supported in one point.

In the proof of Theorem~\ref{thm:Syzygies-of-Hilbert-scheme-of-two-points}, the assumption of positivity of $\Lcal$ was crucial to ensure the vanishing of (\ref{eqn:positivity-of-A-required}) and (\ref{eqn:positivity-of-A-required-2}).
We suspect that there is a bound for positivity of $\Lcal$.

\begin{ques}
    Is there a bound $d_0 = d_0(n)$ such that the line bundle $\Lcal = K_X + dA + C$ satisfies Theorem~\ref{thm:Syzygies-of-Hilbert-scheme-of-two-points} for $d \geq d_0$, given that $A$ is very ample and $C$ is nef? 
\end{ques}

When generalizing Proposition~\ref{prop:resolution-of-singularities-of-secant} to higher dimensional case, a fundamental issue arises as $X^{[2]}$ contains a higher-dimensional linear space rather than a line.
This obstructs the semicontinuity argument in Lemma~\ref{lem:fiber-of-blowup-of-secant}, leading to the following question:

\begin{ques}
    Prove that $\widetilde{\varphi}$ admits a $\PP^1$-bundle structure for higher dimensional case in Lemma~\ref{lem:fiber-of-blowup-of-secant}.
\end{ques}

Finally, one may inquire about the normality of the secant variety $\mathrm{Sec}(X^{[2]})$.
This kind of question was settled by Ullery \cite{Ullery2016}, building upon the description of $\PP(\Ecal_{\Lcal}) \rightarrow \mathrm{Sec}(X)$ in \cite{Vermeire2001}.
With a description in Proposition~\ref{prop:resolution-of-singularities-of-secant}, we hope to apply methods akin to those in \cite{Ullery2016} to prove the normality.

\begin{ques}
    Is $\mathrm{Sec}(X^{[2]})$ normal when $X$ is a surface?
\end{ques}

\section{Declaration of generative AI and AI-assisted technologies in the writing process}

During the preparation of this work the authors used ChatGPT in order to improve the readability.
After using this tool, the authors reviewed and edited the content as needed and take full responsibility for the content of the publication.

%======= Bibliography
\bibliographystyle{amsplain}

\end{document}